\documentclass[12pt]{amsart}
\usepackage[english]{babel}
\usepackage{a4wide,color,graphicx,amsmath,amssymb,verbatim,mathrsfs,latexsym,pxfonts,bm}
\allowdisplaybreaks
\newtheorem{theo}{Theorem}

\newtheorem{lemma}{Lemma}

\newtheorem{rem}{Remark}

\newcommand{\R}{\mathbb{R}}	
	
\newcommand{\N}{\mathbb{N}}

\newcommand{\eps}{\varepsilon}	
\newcommand{\pa}{\partial}		
\newcommand{\Div}{\textrm{div}\,}

\newcommand{\na}{\nabla}

\newcommand{\chf}[1]{{\raisebox{3pt}{\Large $\chi$}}_{#1}}
\newcommand{\rhoed}{\rho^{(\eps,\delta)}}
\newcommand{\rhod}{\rho^{(\delta)}}

\newcommand{\avg}[1]{\frac{1}{|\Omega|}\int_{\Omega} #1\, dx}

\title[]{Connection between a degenerate particle flow model and a free boundary problem}

\author{Li Chen}
\address{University of Mannheim, Department of Mathematics, 68131 Mannheim, Germany}
\email{chen@math.uni-mannheim.de}

\author{Simone G\"ottlich}
\address{University of Mannheim, Department of Mathematics, 68131 Mannheim, Germany}
\email{goettlich@uni-mannheim.de}

\author{Nicola Zamponi}
\address{Ulm University, Institute of Applied Analysis, Helmholtzstra\ss e 18, 89081 Ulm}
\email{nicola.zamponi@uni-ulm.de}

\date{\today}

\thanks{The first author acknowledges partial support from the German Research Foundation (DFG), grant CH 955/8-1. The second author acknowledges financial support from the German Research Foundation (DFG), grants GO1920/11-1 and GO1920/12-1.}

\begin{document}

\begin{abstract}
In this paper a strongly degenerate parabolic equation derived from a density dependent particle flow model is studied. Furthermore, a free boundary problem and its connection to the strongly degenerate parabolic equation is investigated. 
First, it is shown that the strongly degenerate parabolic equation
has a unique global bounded weak solution that converges towards a steady state for large time horizons. 
Two scenarios might occur: When the average density $\rho_{\infty}$ is larger than a certain critical density $\rho_{cr}$, the steady state coincides with $\rho_{\infty}$ and the convergence rate is exponential in the $L^2$ norm; while in the opposite case $\rho_{\infty}<\rho_{cr}$, the steady state is unknown and the convergence is algebraic in a negative Sobolev seminorm. 
Further investigations show that for radially symmetric and decreasing initial data, the solution of the strongly degenerate parabolic equation can be constructed by using the solution of a corresponding free boundary problem. Moreover, the global existence of weak solutions to the latter problem is proved.
Finally, numerical experiments in two space dimensions are presented, which show that segregation phenomena can appear when the initial average density is smaller than the critical density.\\[1ex]
\noindent
{\bf AMS Classification.} 35K65, 35R35, 65M06 \\
{\bf Keywords.} degenerate parabolic equation, free boundary problem, weak solution, long-time behavior, numerical simulations
\end{abstract}

\maketitle

\section{Introduction}

Motivated by a newly developed density dependent particle flow model in \cite{WGA2021} with applications to material flow problems and swarm behavior, we consider the strongly degenerate parabolic equation with non-flux boundary condition 
\begin{align}\label{1}
	&\pa_t\rho = \Delta f(\rho) \quad\mbox{on } Q_T\equiv \Omega\times (0,T],\\
\label{2}	&\nabla f(\rho)\cdot\nu = 0\quad\mbox{on }\pa\Omega\times (0,T),
	\quad\rho(\cdot,0) = \rho_0\quad\mbox{on }\Omega,
\end{align}
where $f : [0,\infty)\to[0,\infty)$ satisfies:
\begin{align}\label{hp.f}
	f\in C^1([0,\infty)),\quad f'>0~~\mbox{on }(\rho_{cr},\infty),\quad f = 0~~\mbox{on }[0,\rho_{cr}],
\end{align}
and $\rho_{cr}>0$ is the given critical density 
and $\Omega$ is an open, bounded domain in $\R^d$, $d\geq 2$.
We also use the notation $Q\equiv \Omega\times (0,\infty)$.

In the case of radially symmetric initial data which are decreasing in the radial coordinate, i.e. $\rho_0(x)=\tilde\rho_0(|x|)$, we will also establish a connection between \eqref{1}--\eqref{2} and the following free boundary problem 
which only degenerates on the free boundary:
\begin{align}\label{fb}
&\pa_t u = \Delta f(u)\qquad \mbox{ for }\quad |x|<R(t), ~~ t > 0,\\
\label{fb.bc}
&u = \rho_{cr}, \qquad
(\rho_{cr} - \rho_0)\pa_t R(t) = |\nabla f(u)|
\qquad \mbox{ for }\quad |x|=R(t), ~~ t>0,\\
\label{fb.ic}
&u(x,0) = \rho_0(x)\quad \mbox{ for } \quad |x|<R_0,
\qquad \mbox{ where }\quad R(0) = R_0 := \tilde{\rho}_0^{-1}(\rho_{cr})>0.
\end{align}
The unknowns of \eqref{fb}--\eqref{fb.ic} are the functions $u : \mathcal{G}_T\to[0,\infty)$ and $R : [0,\infty)\to[0,\infty)$, where we define
\begin{align*}
\mathcal{G}_T := \left\{ 
(x,t)\in\Omega\times (0,T)~:~ |x|<R(t)\right\}\subset Q_T.
\end{align*}
Precisely, we will show that through the solution of the free boundary problem \eqref{fb}--\eqref{fb.ic}, whose existence is also going to be proved, a solution to the strongly degenerate parabolic equation \eqref{1}--\eqref{2} can be constructed.

Strongly degenerate parabolic equations and systems have appeared in the literature of sedimentation related modeling with different boundary conditions \cite{BCBT1999,CBB1996}. A classical scalar degenerate parabolic equation has the following form
\begin{align}\label{ph}
	\pa_t\rho + \nabla\cdot (\Phi(\rho)) = \Delta f(\rho),
\end{align}
where $f$ is a given non-decreasing Lipschitz continuous function with $f(0)=0$ and $\Phi$ is a given density dependent flux function. It is well-known that due to the strong degeneracy, this type of equations exhibit many hyperbolic properties such as the formation of shocks and hence the non-uniqueness of weak solutions. For example, it is proved in \cite{Vol2000} that a weak solution is unique if it satisfies the entropy condition. Another result for an entropy solution in multi-dimension has been obtained in \cite{Ca1999}. For more results on entropy solutions we refer to \cite{BV2006, CK2005, CP2003, VH1969}. The well-posedness theory of weak entropy solutions to strongly degenerate parabolic equation on Riemannian manifolds is established in \cite{GKM2017}. For the general theory of weak solutions to degenerate parabolic equation we refer to \cite{WZYL_Book,Z1989}, and to \cite[Section 5]{TKBB2003} for a short review of the development of the mathematical theory in this direction. Recently, a density dependent particle flow model has been formally derived through a mean-field limit approach in \cite{WGA2021} and is given by
\begin{align}\label{mf}
	\pa_t\rho + \nabla\cdot (\rho v_T - k(\rho)\nabla\rho)=0,
\end{align}
where $k(\rho)=\rho H(\rho-\rho_{cr})$ with $\rho_{cr}>0$ the critical density and $H$ is the Heaviside function. 
Originally, equation~\eqref{mf} has been employed as a material flow model to describe the transport of identical homogeneous particles on a conveyor belt with velocity $v_T$ subject to a nonlinear interaction forces \cite{GHSSV2014}. It has also been used to describe the dynamical behavior of pedestrian crowds \cite{GKS2018,HM1995} as well as biological swarms subject to both attractive and repulsive interaction forces \cite{COP2009}.
Unlike the parabolic-hyperbolic equation \eqref{ph}, the transport part of equation \eqref{mf} is linear and the particle flow model has no hyperbolic structure. 
Therefore, we study this equation without transport term in a first step and deal with \eqref{1}--\eqref{2}. We also give the regularity assumption on $f$ as stated \eqref{hp.f} instead of taking the Heaviside function, for technical simplicity.

Due to the parabolic structure of \eqref{1}--\eqref{2}, the results for existence and uniqueness of weak solution is not surprising, but also not trivial, given the presence of a strong degeneracy.
\begin{theo} [Existence and uniqueness of the weak solution] \label{thm.existence} Assume that \eqref{hp.f} holds and the initial data $\rho_0\in L^{\infty}(\Omega)$. Then the initial boundary value problem \eqref{1}--\eqref{2} admits a unique global-in-time weak solution $\rho$ satisfying
	\begin{align}\label{weaksol.1}
		\rho\in L^\infty(Q),\quad \nabla f(\rho)\in L^2(Q),\quad \pa_t\rho\in L^2(0,\infty; H^{1}(\Omega)'),\\
		\label{weaksol.2}
		-\int_0^T\int_\Omega\rho\pa_t\phi dx dt - \int_{\Omega}\rho_{0}\phi(0)dx + 
		\int_0^T\int_{\Omega}\nabla f(\rho)\cdot\nabla\phi dx dt = 0\\
		\nonumber
		\forall\phi\in C^\infty_c(\Omega\times [0,T)),\quad\forall T>0
	\end{align}
and such that, for a.e.~$x\in\Omega$, the mapping $t\in (0,\infty)\mapsto \min(\rho(x,t),\rho_{cr})\in\R$ is non-decreasing.
\end{theo}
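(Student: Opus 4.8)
The plan is to construct $\rho$ by a vanishing-regularization procedure, and to treat uniqueness and the monotonicity property separately afterwards.

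\smallskip
\noindent\textbf{Regularization and uniform estimates.}
First I would replace $f$ by the strictly increasing approximation $f_\eps(s)=f(s)+\eps s$, so that $f_\eps'\ge\eps>0$ and $f_\eps\to f$ uniformly on compacts, and solve the resulting uniformly parabolic problem $\pa_t\rho_\eps=\Delta f_\eps(\rho_\eps)$ with the same no-flux data by standard quasilinear parabolic theory (Galerkin plus a fixed-point argument), obtaining smooth-enough solutions $\rho_\eps$. Three estimates, uniform in $\eps$, drive everything: a comparison principle giving $0\le\rho_\eps\le\|\rho_0\|_{L^\infty}=:M$ (constants are stationary); the energy identity obtained by testing with $f_\eps(\rho_\eps)$, namely $\frac{d}{dt}\Iom{F_\eps(\rho_\eps)}+\Iom{|\na f_\eps(\rho_\eps)|^2}=0$ with $F_\eps'=f_\eps$, which bounds $\na f_\eps(\rho_\eps)$ in $L^2(Q)$; and, reading the equation in duality, a bound for $\pa_t\rho_\eps$ in $L^2(0,\infty;H^1(\Omega)')$. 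Since $f\in C^1([0,M])$, the Lipschitz constants of $f_\eps$ on $[0,M]$ are bounded uniformly for $\eps\le1$.

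\smallskip
\noindent\textbf{Compactness (the crux).}
The essential difficulty is that, because of the degeneracy, $\na\rho_\eps$ is \emph{not} bounded uniformly in $\eps$; I therefore pass to the limit in $u_\eps:=f_\eps(\rho_\eps)$ rather than in $\rho_\eps$. Spatial compactness of $u_\eps$ comes from the $L^2(0,T;H^1)$ bound. For time compactness I would exploit the monotonicity and uniform Lipschitz continuity of $f_\eps$, writing $|u_\eps(t+h)-u_\eps(t)|^2\le L\,(u_\eps(t+h)-u_\eps(t))(\rho_\eps(t+h)-\rho_\eps(t))$ and controlling the space-time integral of the right-hand side by $h\,\|\na u_\eps\|_{L^2(Q)}^2$ after inserting $\rho_\eps(t+h)-\rho_\eps(t)=\int_t^{t+h}\pa_s\rho_\eps\,ds$ and using the duality bound. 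A Riesz--Kolmogorov argument then yields $u_\eps\to u$ strongly in $L^2(Q)$ and a.e.; as $u_\eps=f(\rho_\eps)+\eps\rho_\eps$ this gives $f(\rho_\eps)\to u$. With $\rho_\eps\rightharpoonup\rho$ weakly-$*$ in $L^\infty(Q)$, a Minty argument (passing to the limit in $\int_Q(f(\rho_\eps)-f(v))(\rho_\eps-v)\ge0$) identifies $u=f(\rho)$, so $\na f(\rho)\in L^2(Q)$ and, passing to the limit in the regularized weak form, $\rho$ satisfies \eqref{weaksol.2}. This degenerate-compactness step is the part I expect to be the main obstacle.

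\smallskip
\noindent\textbf{Uniqueness.}
Given two bounded weak solutions $\rho_1,\rho_2$ with the same data, $w=\rho_1-\rho_2$ has zero spatial mean (mass is conserved by the no-flux condition) and satisfies $\pa_t w=\Delta(f(\rho_1)-f(\rho_2))$ in $L^2(0,\infty;H^1(\Omega)')$. I would estimate in the dual norm induced by the inverse Neumann Laplacian $\mathcal N$ (with $-\Delta\mathcal N w=w$, $\int_\Omega\mathcal N w\,dx=0$, and $\|w\|_{H^1(\Omega)'}^2=\langle w,\mathcal N w\rangle$): then $\tfrac12\frac{d}{dt}\|w\|_{H^1(\Omega)'}^2=\langle\pa_t w,\mathcal N w\rangle=-\Iom{(f(\rho_1)-f(\rho_2))(\rho_1-\rho_2)}\le0$ by the monotonicity of $f$; since $w(0)=0$ this forces $w\equiv0$.

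\smallskip
\noindent\textbf{Monotonicity of $\min(\rho,\rho_{cr})$.}
I would prove this at the regularized level and pass to the limit. Let $\beta_\delta$ be a smooth concave approximation of $\min(\cdot,\rho_{cr})$ with $0\le\beta_\delta'\le1$, $\beta_\delta''\le0$, $\beta_\delta'\equiv0$ on $[\rho_{cr},\infty)$ and $\beta_\delta(0)=0$. Testing $\pa_t\rho_\eps=\Delta f_\eps(\rho_\eps)$ against $\beta_\delta'(\rho_\eps)\phi$ for $0\le\phi\in C_c^\infty(\Omega\times(0,\infty))$, two integrations by parts (boundary terms vanishing by the no-flux condition) give
\[
-\int\beta_\delta(\rho_\eps)\pa_t\phi
=-\int\beta_\delta''(\rho_\eps)f_\eps'(\rho_\eps)|\na\rho_\eps|^2\phi
-\int\beta_\delta'(\rho_\eps)f_\eps'(\rho_\eps)\na\phi\cdot\na\rho_\eps .
\]
The first term is nonnegative. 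The crucial point is the identity $\beta_\delta'(s)f'(s)\equiv0$ (where $\beta_\delta'\ne0$ one has $s<\rho_{cr}$, hence $f'=0$, and conversely), so in the second term only the viscous part survives and it equals $\eps\int\beta_\delta(\rho_\eps)\Delta\phi$; thus $-\int\beta_\delta(\rho_\eps)\pa_t\phi\ge\eps\int\beta_\delta(\rho_\eps)\Delta\phi$. Letting $\delta\to0$ replaces $\beta_\delta(\rho_\eps)$ by $\min(\rho_\eps,\rho_{cr})$, and letting $\eps\to0$ kills the right-hand side. The left-hand side passes to the limit because $\min(\rho_\eps,\rho_{cr})=\rho_\eps-\Gamma(f(\rho_\eps))$, where $\Gamma:[0,\infty)\to[0,\infty)$ is the continuous function characterized by $\Gamma(f(s))=(s-\rho_{cr})^+$ (i.e.\ $\Gamma(r)=f^{-1}(r)-\rho_{cr}$ for $r>0$, $\Gamma(0)=0$); this is a weakly convergent term $\rho_\eps\rightharpoonup\rho$ plus a strongly convergent term $\Gamma(f(\rho_\eps))\to\Gamma(f(\rho))$, so $\min(\rho_\eps,\rho_{cr})\rightharpoonup\rho-\Gamma(f(\rho))=\min(\rho,\rho_{cr})$ and no a.e.\ control of $\rho_\eps$ in the subcritical region is needed. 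We conclude $-\int\min(\rho,\rho_{cr})\pa_t\phi\ge0$ for all such $\phi$, i.e.\ $t\mapsto\min(\rho(x,t),\rho_{cr})$ is nondecreasing for a.e.\ $x$.
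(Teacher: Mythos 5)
Your proposal is correct, and it reaches the theorem by a genuinely different route than the paper; the differences are worth recording. For the construction, you perturb the nonlinearity itself, $f_\eps = f + \eps\,\mathrm{id}$, making the problem uniformly parabolic and outsourcing solvability to standard quasilinear theory, whereas the paper keeps $f$ degenerate, truncates it at height $\delta^{-1}$, and adds both an $\eps H^m$- and a $\delta H^1$-smoothing term, building approximate solutions by hand via Leray--Schauder; your route is leaner, the paper's is self-contained. The main divergence is the compactness step: the paper applies the Div-Curl lemma to the fields $U^{(\delta)}=(f_\delta(\rhod),0,\dots,0)$ and $V^{(\delta)}=(\rhod,-\na f_\delta(\rhod))$ to get $\overline{f(\rhod)\,\rhod}=\overline{f(\rhod)}\,\overline{\rhod}$ and then invokes monotonicity of $f$, while you prove \emph{strong} $L^2$ compactness of $u_\eps=f_\eps(\rho_\eps)$ directly through an Alt--Luckhaus-type time-translate estimate (your inequality $(u_\eps(t+h)-u_\eps(t))^2\le L\,(u_\eps(t+h)-u_\eps(t))(\rho_\eps(t+h)-\rho_\eps(t))$ is valid for monotone Lipschitz $f_\eps$, and the translate integral is indeed $O(h)$, with constant $\|\pa_t\rho_\eps\|_{L^2((H^1)')}\|u_\eps\|_{L^2(H^1)}$ rather than $\|\na u_\eps\|_{L^2}^2$ alone --- a harmless slip) followed by Riesz--Kolmogorov and a Minty identification. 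Both identifications rest on the same monotonicity of $f$, and both correctly avoid any strong compactness of $\rho_\eps$ itself; yours buys strong convergence of the flux variable by classical means, the paper's is shorter once the uniform bounds are in place. Uniqueness is the identical $H^{-1}$-duality argument in both. For the monotonicity of $\min(\rho,\rho_{cr})$, the paper tests the $\delta$-problem with $(\rhod-\rho_{cr})_-^3\psi$ and compares time slices with the initial datum (its limit $\delta\to0$ implicitly uses weak lower semicontinuity of the convex integrand $s\mapsto (s-\rho_{cr})_-^4$), while you derive the distributional inequality $\pa_t\min(\rho,\rho_{cr})\ge 0$ via concave approximations of the minimum; in both cases the engine is that $f'$ and the truncation have disjoint supports. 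Your decomposition $\min(\rho_\eps,\rho_{cr})=\rho_\eps-\Gamma(f(\rho_\eps))$, splitting a weakly convergent term from a strongly convergent one, is a clean way to make that limit passage explicit, and it sidesteps the time-slice convergence issue that the paper's slicewise comparison quietly glosses over.
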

The existence of global-in-time weak solutions is shown by 
regularizing the equation with a Laplacian term and
employing a compactness argument based on the Div-Curl Lemma, while the uniqueness of weak solutions is proved via a duality argument which exploits the monotonicity of $f$.

The next contribution of this paper is the investigation of the long-time behavior of solutions to \eqref{1}--\eqref{2}. We obtain the following result and show some numerical experiments, mostly concerned with the subcritical case.

Let us preliminary define the (relative) free energy density
\begin{align}\label{rel.entr}
F(s\vert\rho_\infty) := F(s) - F(\rho_\infty) - f(\rho_\infty)(s - \rho_\infty),\quad
F(s) := \int_0^s f(s')ds',\quad s\geq 0.
\end{align}
In some situations we will make the following additional assumption on $f$:
\begin{align}\label{hp.kappa}
\exists\kappa > 1:~~\forall R>0,~~\exists C_R, c_R > 0:\quad
c_R(s-\rho_{cr})^\kappa \leq f(s)\leq C_R (s-\rho_{cr})^\kappa,\quad \rho_{cr}\leq s\leq R.
\end{align}
We also define the following negative homogeneous Sobolev seminorm
\begin{align*}
\forall\varphi\in C^0_c(\Omega)\qquad
\|\varphi\|_{\dot{H}^1(\Omega)'} = \|\na V_\varphi\|_{L^2(\Omega)},\qquad
\begin{cases}
-\Delta V_\varphi = \varphi - \fint_\Omega\varphi dx\quad &\mbox{in }\Omega,\\
\nu\cdot\na V_\varphi = 0 \quad & \mbox{on }\pa\Omega,\\
\int_\Omega V_\varphi dx = 0 & 
\end{cases}
\end{align*}

\begin{theo}[Long-time behavior]\label{thm.ltb.diff}
Assume that \eqref{hp.f} holds and that $\Omega$ is connected.

Let $\rho$ be a solution of \eqref{1}--\eqref{2} and define 
$\rho_\infty\equiv\frac{1}{|\Omega|}\int_\Omega\rho_0 dx$.\medskip\\
%
%
If $\rho_\infty > \rho_{cr}$ (supercritical case), then 
\begin{align}\label{cnv.super}
&\rho(t)\to\rho_\infty \quad \mbox{ as } t\to\infty \mbox{ strongly in } L^p(\Omega),\quad  \forall 1\leq p < \infty,\\
\label{lt.exp}
&\exists\lambda>0:\quad
\int_\Omega F(\rho(t)\vert\rho_\infty)dx \leq e^{-\lambda t}\int_\Omega F(\rho_0\vert\rho_\infty)dx\qquad t>0.
\end{align}\smallskip\\
%
%
If $\rho_\infty = \rho_{cr}$ (critical case), then \eqref{cnv.super} holds. \medskip\\
%
%
If $\rho_\infty < \rho_{cr}$ (subcritical case), then
\begin{itemize}
\item[(i)] a function $\hat{\rho}\in L^\infty(\Omega)$ exists such that 
\begin{align}\label{cnv.rhohat}
\rho(t)\to\hat\rho\quad \mbox{ as } t\to\infty \mbox{ strongly in } L^p(\Omega),\quad  \forall 1\leq p < \infty,
\end{align}
and it holds $\hat{\rho}\leq\rho_{cr}$ a.e.~in $\Omega$.\smallskip

\item[(ii)] assuming that \eqref{hp.kappa} is fulfilled, one obtains the following algebraic convergence rate
\begin{align*}
\|(\rho(t)-\rho_{cr})_+\|_{L^1(\Omega)} + 
\|\rho(t)-\hat{\rho}\|_{\dot{H}^{1}(\Omega)'}\leq C\, t^{-\frac{1}{\kappa - 1}}
\end{align*}
for $t\to\infty$.\smallskip
\item[(iii)] If $\mbox{meas}(\{ \rho_0 > \rho_{cr} \}) > 0$, then
there exists $T^*>0$ such that 
$\int_0^{T^*}\int_\Omega |\nabla\rho|^2 dx dt = \infty$.
\end{itemize}
\end{theo}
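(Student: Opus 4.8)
The backbone of the argument is the free-energy dissipation identity
\begin{equation*}
\frac{d}{dt}\int_\Omega F(\rho(t)\vert\rho_\infty)\,dx = -\int_\Omega|\nabla f(\rho(t))|^2\,dx,
\end{equation*}
obtained by using $f(\rho)-f(\rho_\infty)\in L^2(0,T;H^1(\Omega))$ as a test function in the weak formulation \eqref{weaksol.2}, together with mass conservation $\avg{\rho(t)}=\rho_\infty$ (testing with a constant) and the convexity chain rule for $F$; this is made rigorous through the regularised problem of Theorem \ref{thm.existence}. For $\rho_\infty>\rho_{cr}$ I would establish the functional inequality $\int_\Omega F(\rho\vert\rho_\infty)\,dx\le\lambda^{-1}\int_\Omega|\nabla f(\rho)|^2\,dx$ on the constraint set $\{0\le\rho\le M,\ \avg{\rho}=\rho_\infty\}$, where $M=\|\rho\|_{L^\infty(Q)}$, by a compactness--contradiction argument: if the inequality failed, a normalised sequence $\rho_n$ with $\int F(\rho_n\vert\rho_\infty)=1$ and $\int|\nabla f(\rho_n)|^2\to0$ would force $f(\rho_n)\to c$ to a constant in $H^1$; the mean constraint excludes $c=0$ (which would push $\avg{\rho_n}$ below $\rho_{cr}$), while $c>0$ forces $\rho_n\to\rho_\infty$ and hence $\int F(\rho_n\vert\rho_\infty)\to0$, a contradiction. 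Gr\"onwall's lemma then yields \eqref{lt.exp}, and \eqref{cnv.super} follows because $s\mapsto F(s\vert\rho_\infty)$ is continuous and vanishes on $[0,M]$ only at $s=\rho_\infty$, so that decay of the free energy controls $\mathrm{meas}(\{|\rho(t)-\rho_\infty|\ge\eta\})$ and, with the $L^\infty$ bound, gives convergence in every $L^p$.

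When $\rho_\infty=\rho_{cr}$ one has $f(\rho_\infty)=0$ and $F(\cdot\vert\rho_\infty)=F$, and the identity only provides $\int_0^\infty\int_\Omega|\nabla f(\rho)|^2<\infty$; I would therefore work along a sequence $t_k\to\infty$ with $\int_\Omega|\nabla f(\rho(t_k))|^2\to0$. As before $f(\rho(t_k))$ converges to a constant $c$; the value $c>0$ is incompatible with $\avg{\rho}=\rho_{cr}$, so $c=0$ and $(\rho(t_k)-\rho_{cr})_+\to0$ in every $L^p$, after which mass conservation forces $(\rho_{cr}-\rho(t_k))_+\to0$ as well, i.e.\ $\rho(t_k)\to\rho_{cr}$. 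To upgrade from the sequence to the full limit I would invoke the monotonicity of Theorem \ref{thm.existence}: $\min(\rho(x,t),\rho_{cr})$ increases to some $m(x)\le\rho_{cr}$, and the sequential limit pins $m\equiv\rho_{cr}$, so $(\rho_{cr}-\rho(t))_+\to0$ along the whole trajectory; mass conservation transfers this to $(\rho(t)-\rho_{cr})_+$, yielding \eqref{cnv.super}.

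For $\rho_\infty<\rho_{cr}$ the same monotonicity gives $\min(\rho(x,t),\rho_{cr})\uparrow\hat\rho(x)\le\rho_{cr}$ a.e. Since $\Phi(t):=\int_\Omega F(\rho(t))\,dx$ is nonincreasing and the sequential argument above shows $F(\rho(t_k))\to0$ in $L^1$, the monotone limit of $\Phi$ is $0$; as $F(s)>0$ for $s>\rho_{cr}$ this forces $(\rho(t)-\rho_{cr})_+\to0$ in $L^p$ for the whole trajectory. Writing $\rho(t)=\min(\rho(t),\rho_{cr})+(\rho(t)-\rho_{cr})_+$ then gives $\rho(t)\to\hat\rho$ strongly in $L^p$ with $\hat\rho\le\rho_{cr}$, proving (i).

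The quantitative claims are where the main difficulty lies. For (ii), \eqref{hp.kappa} gives $\Phi(t)\sim\|(\rho-\rho_{cr})_+\|_{L^{\kappa+1}}^{\kappa+1}$ and $D(t):=\int_\Omega|\nabla f(\rho)|^2\sim\|\nabla(\rho-\rho_{cr})_+^\kappa\|_{L^2}^2$; combining these through a Gagliardo--Nirenberg/Sobolev inequality, and exploiting that the supercritical mass is small, I would derive a closed differential inequality $\Phi'=-D\le -c\,\Phi^{1+\beta}$ with $\beta=\beta(\kappa)>0$, whose integration yields the algebraic decay and, after matching exponents, $\|(\rho(t)-\rho_{cr})_+\|_{L^1}\le C t^{-1/(\kappa-1)}$. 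For the Wasserstein term I would view \eqref{1} as a continuity equation with velocity $-\nabla f(\rho)/\rho$, observing that the flux $\nabla f(\rho)$ is supported on $\{\rho\ge\rho_{cr}\}$ where $\rho$ is bounded below; hence the metric speed is dominated by $(D/\rho_{cr})^{1/2}$ and $\mathcal{W}_2(\rho(t),\hat\rho)\le\rho_{cr}^{-1/2}\int_t^\infty D^{1/2}\,ds$, which the relation $\Phi(t)=\int_t^\infty D\,ds$ converts into the same power $t^{-1/(\kappa-1)}$. Finally for (iii): assuming $\int_0^{T^*}\int_\Omega|\nabla\rho|^2<\infty$, I would use $|\nabla f(\rho)|^2=f'(\rho)^2|\nabla\rho|^2$ together with the monotone/frozen structure---on $\{\rho_0>\rho_{cr}\}$ the density stays $\ge\rho_{cr}$ for all time while adjacent subcritical values are frozen, since $\partial_t\rho=0$ where $f(\rho)=0$---to produce a persistent density gradient across the emerging free boundary where $f'(\rho)\to0$; the weight $f'(\rho)^{-2}\to\infty$ then makes $\int|\nabla\rho|^2$ diverge, contradicting the assumption. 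I expect the sharp differential inequality in (ii) and this free-boundary blow-up in (iii) to be the most delicate points of the proof.
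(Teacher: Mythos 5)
Your overall architecture (free-energy dissipation, a constrained Poincar\'e-type inequality, Gr\"onwall, monotonicity of $\min(\rho,\rho_{cr})$ to identify $\hat\rho$) largely coincides with the paper's, and two pieces are legitimate alternatives: the critical-case upgrade via monotonicity (the paper instead uses monotonicity of $t\mapsto\int_\Omega F(\rho(t))dx$ plus mass conservation), and the $\mathcal{W}_2$ bound via metric speed (the paper bounds $\mathcal{W}_2$ by $\|\rho(t)-\hat\rho\|_{\dot H^1(\Omega)'}$, differentiates $\|\nabla(V(t)-\hat V)\|_{L^2}^2$ with $-\Delta V(t)=\rho(t)-\avg{\rho(t)}$, and integrates in time). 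Your metric-speed route does close, but not by the step you state: $\int_t^\infty D^{1/2}ds$ is \emph{not} controlled by $\bigl(\int_t^\infty D\,ds\bigr)^{1/2}$; one needs Cauchy--Schwarz on dyadic blocks $[2^jt,2^{j+1}t]$ together with $\int_t^\infty D\,ds=\Phi(t)\le Ct^{-(\kappa+1)/(\kappa-1)}$, whose geometric sum then gives $Ct^{-1/(\kappa-1)}$.

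The serious gap is (iii). Your mechanism --- a persistent gradient across the free boundary plus the diverging weight $f'(\rho)^{-2}$ --- would fail. The identity $|\nabla\rho|=f'(\rho)^{-1}|\nabla f(\rho)|$ produces a divergent integral only if $|\nabla f(\rho)|$ is bounded below near the interface; that lower bound is exactly the formal free-boundary condition \eqref{fb.bc} (density jump times boundary speed), which is not available for weak solutions, can fail (the jump vanishes if $\rho_0$ approaches $\rho_{cr}$ continuously), and for merely measurable $\rho_0$ the set $\{\rho_0>\rho_{cr}\}$ has no usable interface geometry at all. Moreover your freezing claim ``$\pa_t\rho=0$ where $f(\rho)=0$'' is valid only on open space-time sets where $\rho$ stays subcritical, not at points being swept by the supercritical region. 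The paper's proof avoids all geometry: assuming $\nabla\rho\in L^2(0,T;L^2(\Omega))$ for \emph{every} $T>0$ (this, not finiteness on a single interval, is the correct contradiction hypothesis), one may test with $(\rho-\rho_{cr})_-^3\psi$; every term carrying $f'(\rho)(\rho-\rho_{cr})_-$ vanishes identically, hence $\min(\rho(t),\rho_{cr})=\min(\rho_0,\rho_{cr})$ for all $t$. Combined with (i) and $(\rho(t)-\rho_{cr})_+\to0$ this forces $\hat\rho=\min(\rho_0,\rho_{cr})$, whose total mass is strictly smaller than that of $\rho_0$ when $\mbox{meas}(\{\rho_0>\rho_{cr}\})>0$ --- contradicting mass conservation. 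The contradiction comes from mass conservation, not from gradient blow-up across an interface; this is the idea your sketch is missing.

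There is also a gap in your supercritical functional inequality, and the same issue resurfaces in (ii). Your compactness argument cannot be normalised: $\int_\Omega F(\rho_n\vert\rho_\infty)dx=1$ is not arrangeable, since rescaling $\rho_n$ destroys the constraints $0\le\rho_n\le M$, $\avg{\rho_n}=\rho_\infty$, and the two sides of the inequality are not homogeneous of equal degree in any common variable. Working with the ratio instead, the dangerous sequences are those with \emph{both} $\int_\Omega F(\rho_n\vert\rho_\infty)dx\to0$ and $\int_\Omega|\nabla f(\rho_n)|^2dx\to0$; there your limit argument ($f(\rho_n)\to c$, $c=f(\rho_\infty)$, $\rho_n\to\rho_\infty$) yields no contradiction, and a quantitative inequality near $\rho_\infty$ is required. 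The paper's fix is two-step: Lemma \ref{lem.new.f}, a \emph{homogeneous} inequality $\|f(\rho)-f(\rho_\infty)\|_{L^2(\Omega)}\le C\|\nabla f(\rho)\|_{L^2(\Omega)}$ where normalisation by $\|f(\rho_n)-f(\rho_\infty)\|_{L^2(\Omega)}$ is legitimate, followed by the pointwise bound $F(s\vert\rho_\infty)\le C|f(s)-f(\rho_\infty)|^2$ via De L'Hospital. The same lemma is what part (ii) needs: ``GNS plus smallness of the supercritical mass'' does not exclude the enemy profile $f(\rho)\approx$ positive constant (zero dissipation but $\Phi$ bounded away from zero), and the measure bound $|\{\rho>\rho_{cr}\}|\le(\rho_\infty/\rho_{cr})|\Omega|$ is a fixed constant, too large to absorb the lower-order GNS term. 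What saves the argument is the mass constraint itself, e.g.\ through the fact that $f(\rho(t))$ vanishes on a set of measure at least $(1-\rho_\infty/\rho_{cr})|\Omega|>0$, which yields $\|f(\rho)\|_{L^2(\Omega)}\le C\|\nabla f(\rho)\|_{L^2(\Omega)}$ (this is Case 2 of Lemma \ref{lem.new.f}) and hence, with \eqref{hp.kappa} and Jensen, the closed inequality $\Phi'\le-c\,\Phi^{2\kappa/(\kappa+1)}$ used in the paper.
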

The long-time behavior of the solutions in the supercritical case $\rho_\infty> \rho_{cr}$ is obtained from a free energy identity and a Poincar\'e-Wirtinger-type inequality (see Lemma \ref{lem.new.f} in the Appendix).
In the subcritical regime $\rho_\infty<\rho_{cr}$, the aforementioned tools
only provide a partial information on the long-time behavior of the solution. They are therefore complemented by a non-constructive argument, based on a monotonicity property of the solution, which yields the convergence of the solution towards a steady state.

The algebraic rate of convergence with respect to the negative Sobolev seminorm follows from a duality argument and assumption \eqref{hp.kappa}, which describes the behavior of $f$ near the critical value. We wish to point out that the obtained algebraic decay rate resembles closely the result holding for the porous medium equation in the whole space \cite{FriKam1980}, which is in agreement with the intuition that the dynamics of \eqref{1} under assumption \eqref{hp.kappa} should resemble a porous medium equation in the region close to the critical value. Finally, in the critical case $\rho_\infty = \rho_{cr}$ we are not able to show a suitable Poincar\'e-Wirtinger-type inequality and thus we cannot derive a decay estimate for the solution. However, on the other hand, we can deduce strong $L^p(\Omega)$ convergence towards the steady state for any $p<\infty$ by exploiting the free energy estimate and mass conservation.

The final result we present is concerned with the free boundary problem
\eqref{fb}--\eqref{fb.ic} and its relation with the degenerate parabolic equation \eqref{1}--\eqref{2}.

Due to the strong degeneracy of $f$ in \eqref{1}, if the initial data is partly larger than the critical density $\rho_{cr}$ and partly smaller, one can observe that the hyper-surface where $\rho(x,t)=\rho_{cr}$ moves continuously in time. Therefore it is interesting to study the dynamics of this hyper-surface that is mathematically a free boundary problem. Unlike the classical boundary value problem, the challenging point of free boundary problem is ''the fact that the domains are a priori unknown and have to be determined as a part of the problem'', see \cite{CSV2015}. There exist a large number of scientific works to study free boundary problems from a partial differential equation point of view. Mathematical results have been started in 1970's from the famous {\it Stefan problem} which describes the dynamics of two phase flow, see \cite{F1968,FK1975,KN1978}. There are also works on minimal surface \cite{S1984}, fractional Laplacian \cite{CSS2008}, parabolic phase transition problems \cite{ACS1996,CPS2004}, and Stefan problem with Fisher-KPP sources \cite{D2013,DG2012,DMW2014}, to name a few. It is out of the scope of this paper to make a full review on free boundary problems, we refer to \cite{CSV2015} for the current and future development of this hard research topic. Works much closer to the topic of this paper are free boundary problems on degenerate parabolic equations \cite{DS1983,CF2000,BFK2002}, for example the porous media type diffusion in \cite{Wu1984,ZL1990}, $p$-Laplacian type diffusion in \cite{Zhao1997,HS2002}, and fully nonlinear parabolic equation \cite{FS2015}. To our knowledge, there is no work on the free boundary problem within the strongly degenerate parabolic setting given by \eqref{1}. 

Since it is usually very hard to check the movement of the free boundary, in this paper we consider a radially symmetric initial data which is decreasing in radius, and try to follow the dynamics of the radius where the diffusion effect disappears. A formal computation implies the setting of the free boundary problem \eqref{fb}--\eqref{fb.ic}. In the following theorem, we prove the existence of weak solutions of this problem, from which we obtain the weak solution of the original initial boundary value problem \eqref{1}--\eqref{2}. This shows simultaneously also the long-time behavior of the solution in the subcritical case $\rho_\infty < \rho_{cr}$.

\begin{theo}[Free-boundary problem]\label{thm.fb}
Let $\Omega\subset\R^d$ be a ball of center 0. 
Assume that $\rho_0\in C^0(\overline{\Omega})$ satisfies
\begin{align*}
\rho_0(x) = \overline{\rho}_0(|x|)\quad x\in\Omega,\qquad {\overline{\rho}_0\,'<0\mbox{ in }(0,\infty)},\qquad
\rho_0(0) > \rho_{cr}.
\end{align*}
{\bf Existence.}
There exists a weak solution $(u,R)$ to \eqref{fb}--\eqref{fb.ic} satisfying 
\begin{align}\label{fb.reg}
u\in L^\infty(\mathcal{G}_T),\quad  
\sup_{t\in [0,T]}\int_{|x|<R(t)}|\na_x f(u)|^2 dx<\infty,\quad
\int_0^T\left( \int_{|x|=R(t)}|\na_x f(u)|^{3/2}d\sigma_x \right)^2 dt<\infty,\\
\label{fb.reg.2}
R\in C^{0,\alpha}([0,T]) \quad 0<\alpha<\frac{1}{3},\qquad
R'\in L^p(0,T)\quad 1<p<\frac{3}{2},
\end{align}
and the following weak formulation
\begin{align}\label{fb.weak.1}
&\int_0^T\int_{|x|<R(t)} u\pa_t\phi\, dx dt
+\int_{|x|<R_0}\rho_0(x) \phi(x,0)dx \\ \nonumber
&\qquad =\int_0^T\int_{|x|<R(t)}\na\phi\cdot\na f(u)\, dx dt
+\int_0^T\int_{|x|=R(t)}\phi |\na f(u)|d\sigma dt\\ \nonumber
&\forall \phi\in C^1(\overline{\mathcal{G}_T}),~~\phi(\cdot,T)\equiv 0,\\
& R(t) = R(0) + \int_0^t\frac{|\na_x f(u(\tau,x))|}{\rho_{cr} - \rho_0(x)}\Big\vert_{|x|=R(\tau)}d\tau,\quad 0\leq t\leq T.
\end{align}
The function $u$ is also radially symmetric and it holds
\begin{align*}
|\na f(u)| = -\nu\cdot\na f(u)\quad\mbox{on $|x|=R(t)$, $t>0$.}
\end{align*}
{\bf Relation to \eqref{1}--\eqref{2}.}
The function $\rho : \Omega\times (0,T)\to [0,\infty)$ defined as
\begin{align}\label{rho.fb}
\rho(t) = u(t)\chf{{E(t)}} + \rho_{0}\chf{\Omega\backslash {E(t)}}\quad t > 0,\quad 
E(t) = \{|x|\leq R(t)\}
\end{align}
is a weak solution to \eqref{1}--\eqref{2} according to Theorem~\ref{thm.existence}. Furthermore, 
if $\rho_\infty \leq \rho_{cr}$,  
the steady state $\hat{\rho} = \lim_{t\to\infty}\rho(t)$ to \eqref{1}--\eqref{2} is given by
\begin{align}\label{SteadyStateRepr}
\hat{\rho} = \rho_{cr}\chf{{E_\infty}} + \rho_{0}\chf{\Omega\backslash {E_\infty}},\quad E_\infty = \{|x|\leq R_\infty\},\quad R_\infty = \lim_{t\to\infty}R(t).
\end{align}
\end{theo}

\begin{rem} Let $\hat R$ be the radius of $\Omega$. Then clearly $R(t)\leq \hat R$ for every $t>0$, as well as $R_\infty\leq \hat R$. Moreover, mass conservation implies that $R_\infty = \hat R$ if $\rho_\infty\geq \rho_{cr}$, 
while $R_\infty < \hat R$ if $\rho_\infty < \rho_{cr}$. The latter case \eqref{SteadyStateRepr} implies that $\hat{\rho}$ is a discontinuous function as $E_\infty\subsetneqq \Omega$.
\end{rem}

The existence of solutions to \eqref{fb}--\eqref{fb.ic} is obtained by 
applying a change of variables which transforms it into a semilinear equation on a constant domain (a ball) with time-dependent coefficients coupled with an ordinary differential equation (ODE). From this latter system an approximated problem is then considered by adding a truncation operator with parameter $\eps>0$ and a time regularization with parameter $\delta>0$ to the equations. The limits $\delta\to 0$, $\eps\to 0$ are performed (in this order) by deriving suitable uniform estimates for the approximated solution and by employing Aubin-Lion's Lemma.

A remarkable aspect of the result contained in Theorem \ref{thm.fb} is the fact that the solution $\rho$ to \eqref{1}--\eqref{2} is {\em discontinuous} along the critical set $\pa E(t) = \{ |x|=R(t) \}$, as the density remains above the critical value $\rho_{cr}$ inside $E(t)=\{|x|\leq R(t)\}$ and stays below $\rho_{cr}$ outside $E(t)$. In fact, the evolution of $\rho(t)$ is driven by diffusion inside $E(t)$, which pushes $\rho(t)$ downwards towards the critical value $\rho_{cr}$. As a consequence of mass conservation, the domain $E(t)$ expands, i.e.~$R(t)$ grows with time. On the other hand, outside $E(t)$ the solution $\rho(t)$ coincides with the initial datum since the equation \eqref{1} reduces to $\pa_t\rho(t)=0$ in $\Omega\backslash\overline{E(t)}$. As a consequence of these combined phenomena, the solution is discontinuous across the border of $E(t)$ for every time $t>0$. Notice the agreement between these observations and statement (iii) for the case $\rho_\infty<\rho_{cr}$ in Theorem~\ref{thm.ltb.diff}: as a matter of fact, the hypothesis that the set $\{\rho_{0}>\rho_{cr}\}$ has positive measure has been also assumed in the proof of Theorem \ref{thm.fb}.

The arrangement of the paper is the following. In Section \ref{sec.ex} we prove Theorem~\ref{thm.existence}. In Section \ref{sec.longtime} we prove Theorem~\ref{thm.ltb.diff}. In Section \ref{sec.fb} we prove Theorem\ref{thm.fb}. 
Finally, in Section \ref{sec.numerics}, several numerical experiments for the solution behavior are presented, showing the convergence towards the constant steady state in the supercritical case (which is expected from the analysis), as well as a segregation phenomenon for initial data with subcritical mass. 

\section{Existence and uniqueness analysis (Proof of theorem \ref{thm.existence})}\label{sec.ex}
In this section, we prove Theorem \ref{thm.existence}. 
The proof of the existence starts from the construction of a family of approximated solutions depending on two approximation parameters $\eps$ and $\delta$. We will first show that the approximated solutions satisfy estimates that are uniform with respect to the approximation parameters. Then, we will invoke the Div-Curl Lemma to deduce the existence of a strongly convergent subsequence of approximated solutions and show that the limit of such a subsequence is a global weak solution to \eqref{1} satisfying 
\eqref{weaksol.1}--\eqref{weaksol.2}. In the end, we prove the uniqueness by using the monotonicity property of $f$ together with the $H^{-1}$ method.\medskip\\
{\bf Approximated solutions.} We introduce two approximation parameters: $\delta>0$ (lower order regularization) and $\eps>0$ (higher order regularization).

For $0<\delta<\min\{\|\rho_{0}\|_\infty^{-1},\rho_{cr}^{-1}\}$ we define
\begin{align*}
f_\delta(r) = \begin{cases}
f(r)  & r < \delta^{-1};\\
f(\delta^{-1}) + f'(\delta^{-1})(r-\delta^{-1}) & r\geq \delta^{-1}.
\end{cases}
\end{align*}
Let us consider the approximated problem (with $m > d/2$ a given integer)
\begin{align}\label{app.1}
&\int_0^T\langle \pa_t\rhoed , \phi\rangle dt + \eps\int_0^T (\rhoed,\phi)_{H^m(\Omega)} dt
+ \delta\int_0^T (\rhoed,\phi)_{H^1} dt\\ \nonumber
& \quad= -\int_0^T\int_{\Omega}\nabla f_\delta(\rhoed)\cdot\nabla\phi dx dt\qquad
\forall\phi\in L^2(0,T; H^m(\Omega)),\\
&\label{app.1.ic}
\rhoed(0) = \rho_0\quad\mbox{in }\Omega,
\end{align}
to be solved for $\rhoed\in L^2(0,T; H^m(\Omega))$. We reformulate \eqref{app.1}--\eqref{app.1.ic} as a fix point problem for the mapping $\mathcal{T} : (u,\sigma)\in L^2(0,T; H^1(\Omega))\times [0,1]\mapsto \rho\in L^2(0,T; H^1(\Omega))$,
\begin{align}\label{fixp.1}
&\int_0^T\langle \pa_t\rho , \phi\rangle dt + \eps\int_0^T (\rho,\phi)_{H^m(\Omega)} dt
+ \delta\int_0^T (\rho,\phi)_{H^1} dt  \\ \nonumber
&\quad =-\sigma\int_0^T\int_{\Omega}\nabla f_\delta(u)\cdot\nabla\phi dx dt\qquad
\forall\phi\in L^2(0,T; H^m(\Omega)),\\
&\label{fixp.1.ic}
\rho(0) = \rho_0\quad\mbox{in }\Omega.
\end{align}
The mapping $\mathcal T$ is clearly well-defined given the assumptions on $f$ and the definition of $f_\delta$. Furthermore $\mathcal{T}(\cdot,0)$ is constant. Also choosing $\phi=\rho$ in \eqref{fixp.1} yields the bound
\begin{align}\label{fixp.2}
\|\rho\|_{L^\infty(0,T; L^2(\Omega))} +
\|\pa_t\rho\|_{L^2(0,T; H^{m}(\Omega)')}
+ \|\rho\|_{L^2(0,T; H^m(\Omega))}\leq C(\eps,\delta)( \|\rho_0\|_2 + \|u\|_{L^2(0,T; H^1(\Omega))} ).
\end{align}
This means that $\mathcal{T}(L^2(0,T; H^1(\Omega))\times [0,1])$ is bounded in the space
\begin{align*}
\left\{
\rho\in L^2(0,T; H^{m}(\Omega))\quad\vert\quad 
\pa_t\rho\in L^2(0,T; H^{m}(\Omega)')
\right\}
\end{align*}
which, due to Aubin-Lions Lemma, embeds compactly in $L^2(Q_T)$ and via interpolation also in $L^2(0,T; H^1(\Omega))$. This means that $\mathcal{T}$ is also compact. Standard arguments yield the continuity of $\mathcal{T}$ in a straightforward way. Finally, a $\sigma-$uniform bound on the elements of the set
\begin{align*}
\{ \rho\in L^2(0,T; H^{1}(\Omega))\quad\vert\quad \mathcal T(\rho,\sigma)=\rho \}
\end{align*}
follows immediately from \eqref{fixp.2} by choosing $u=\rho$ and applying an interpolation inequality as well as Gronwall's lemma. Therefore Leray-Schauder fix point theorem yields the existence of a fix point $\rho=\rhoed$ for $\mathcal{T}(\cdot,1)$, that is, a solution $\rhoed\in L^2(0,T; H^{m}(\Omega))$ to \eqref{app.1}--\eqref{app.1.ic}.\medskip\\
{\bf Uniform estimates and compactness argument.}
Now we aim at taking the limit $\eps\to 0$ in \eqref{app.1}--\eqref{app.1.ic}. First notice that testing \eqref{app.1} against $\rhoed\in L^2(0,T; H^m(\Omega))$ and exploiting the non-negativity of $f_\delta'$ lead to the bounds
\begin{align}\label{app.est}
\|\rhoed\|_{L^\infty(0,T; L^2(\Omega))} +
\delta^{1/2}\|\rhoed\|_{L^2(0,T; H^{1}(\Omega))}
+ \eps^{1/2}\|\rhoed\|_{L^2(0,T; H^m(\Omega))}\leq 
C\|\rho_0\|_2 .
\end{align}
A uniform bound for the time derivative $\pa_t\rhoed$ in $L^2(0,T; H^m(\Omega)')$ is also derived in a straightforward way from \eqref{app.1} and \eqref{app.est}. From Aubin-Lions Lemma it follows that (up to subsequences) $\rhoed$ is strongly convergent as $\eps\to 0$ in $L^2(Q_T)$ towards $\rhod\in L^2(0,T; H^1(\Omega))$, which is the solution to
\begin{align}\label{app.2}
&\int_0^T\langle \pa_t\rhod , \phi\rangle dt 
+ \delta\int_0^T (\rhod,\phi)_{H^1} dt = -\int_0^T\int_{\Omega}\nabla f_\delta(\rhod)\cdot\nabla\phi dx dt\\ 
&\nonumber
\qquad
\forall\phi\in L^2(0,T; H^1(\Omega)),\\
\label{app.2.ic}
&\rhod(0) = \rho_0\quad\mbox{in }\Omega.
\end{align}
The next step is taking the limit $\delta\to 0$ in \eqref{app.2}--\eqref{app.2.ic}. Choosing $\phi=f_\delta(\rhod)\in L^2(0,T; H^1(\Omega))$ in \eqref{app.2} leads to
\begin{align*}
\int_\Omega F_\delta(\rhod(T))dx 
&+ \delta\int_0^T\int_\Omega\rhod f_\delta(\rhod)dx d\tau 
+ \delta\int_0^T\int_\Omega |\nabla\rhod|^2 f_\delta'(\rhod)dx d\tau \\ 
&+ \int_0^T\int_\Omega |\nabla f_\delta(\rhod)|^2 dx d\tau
=\int_\Omega F_\delta(\rho_0)dx,\qquad t>0,
\end{align*}
where the approximated free energy is given by
\begin{align}\label{F.delta}
F_\delta(\rho)\equiv\int_0^\rho f_\delta(u)du ,\qquad\rho>0.
\end{align}
This yields the following bound
\begin{align}\label{app.est.2}
\|\nabla f_\delta(\rhod)\|_{L^2(Q_T)}^{2} \leq 
\int_\Omega F_\delta(\rho_0)dx\leq C(\|\rho_0\|_\infty),
\end{align}
where the last inequality holds since $0<\delta<\|\rho_{0}\|_\infty^{-1}$.
Furthermore, the weak lower semicontinuity of the $L^2(0,T; H^1(\Omega))$ norm and \eqref{app.est} yield
\begin{align}\label{app.2.est}
\|\rhod\|_{L^\infty(0,T; L^2(\Omega))} +
\delta^{1/2}\|\rhod\|_{L^2(0,T; H^{1}(\Omega))}\leq 
C\|\rho_0\|_2 .
\end{align}

A $\delta-$uniform $L^\infty(Q)$ bound for $\rhod$ can be proved via Stampacchia truncation. Let $M\equiv \sup_\Omega \rho_0$.
Choosing $\phi = (\rhod - M)_+\in L^2(0,T; H^1(\Omega))$ in \eqref{app.2} and exploiting the non-negativity of $f'$ yield
\begin{align*}
\int_\Omega (\rhod(t) - M)_+^2 dx\leq \int_\Omega (\rho_0 - M)_+^2 dx = 0
\end{align*}
meaning that $\rhod(t) \leq M = \|\rho_0\|_\infty$ a.e.~in $\Omega$, $t>0$.\medskip\\
This means that the vector field 
\begin{align*}
U^{(\delta)} = (f_\delta(\rhod),0,\ldots,0)
\end{align*}
satisfies
\begin{align*}
\|U^{(\delta)}\|_{L^{\infty}(Q_T)}\leq C, \quad
\mbox{ $\mbox{Curl}_{(t,x)}U^{(\delta)}$ is relatively compact in $W^{-1,r}(Q_T)$ for some $r>1$ }
\end{align*}
since 
\begin{align*}
\|\mbox{Curl}_{(t,x)}U^{(\delta)}\|_{L^2(Q_T)}\leq C \|\nabla f_\delta(\rhod)\|_{L^2(Q_T)}.
\end{align*}
Furthermore, the vector field
\begin{align*}
V^{(\delta)} = (\rhod , -\nabla f_\delta(\rhod) )
\end{align*}
satisfies
\begin{align*}
\|V^{(\delta)}\|_{L^2(Q_T)}\leq C, \quad
\mbox{ $\mbox{div}_{(t,x)}V^{(\delta)}$ is relatively compact in $W^{-1,r}(Q_T)$ for some $r>1$ }
\end{align*}
since 
\begin{align*}
\Div_{(t,x)}V^{(\delta)} = -\delta\rhod + \delta\Delta\rhod \to 0\quad\mbox{strongly in }L^2(0,T; H^1(\Omega)').
\end{align*}
Therefore, the Div-Curl Lemma allows us to deduce (the bar denotes weak limit)
\begin{align*}
\overline{U^{(\delta)}\cdot V^{(\delta)} } = \overline{U^{(\delta)}}\cdot\overline{V^{(\delta)}}
\end{align*}
and so
\begin{align*}
\overline{f_\delta(\rhod) \rhod } = \overline{f_\delta(\rhod)}\,\overline{\rhod}\quad\mbox{a.e.~in }Q_T.
\end{align*}
On the other hand, $f_\delta(r)=f(r)$ for $r\leq\delta^{-1}$,
which means, given the uniform $L^\infty$ bound for $\rhod$, that for $\delta>0$ small enough $f_\delta(\rhod)=f(\rhod)$ a.e.~in $Q_T$. Therefore
\begin{align*}
\overline{f(\rhod) \rhod } = \overline{f(\rhod)}\,\overline{\rhod}\quad\mbox{a.e.~in }Q_T .
\end{align*}
Being $f$ monotone, we deduce
\begin{align*}
\overline{f(\rhod)} = f(\overline{\rhod})\quad\mbox{a.e.~in }Q_T.
\end{align*}
Let $\rho = \overline{\rhod}$. We just proved that $f_\delta(\rhod)\rightharpoonup^* f(\rho)$ weakly* in $L^\infty(Q_T)$ as $\delta\to 0$ (up to subsequences). Furthermore \eqref{app.est.2} implies the $L^2(Q_T)-$weak convergence of $\nabla f_\delta(\rhod)$, meaning that
\begin{align*}
\nabla f_\delta(\rhod)\rightharpoonup\nabla f(\rho)\quad\mbox{weakly in }L^2(Q_T).
\end{align*}
A uniform bound for $\pa_t\rhod$ in $L^2(0,T; H^1(\Omega)')$ can be easily derived from \eqref{app.2} and \eqref{app.est.2} implying that 
\begin{align*}
\pa_t\rhod \rightharpoonup\pa_t\rho\quad\mbox{weakly in }L^2(0,T; H^1(\Omega)').
\end{align*}
We are therefore able to take the limit $\delta\to 0$ in \eqref{app.2} and obtain a weak solution to \eqref{1}--\eqref{2}.\medskip\\
\begin{align*}
	\rho_1,\, \rho_2\in L^\infty(Q),\quad \nabla f(\rho_1),\,\na f(\rho_2)\in L^2(Q),\quad \pa_t\rho_1,\, \pa_t\rho_2\in L^2(0,\infty; H^{1}(\Omega)'),
\end{align*}
with the same initial datum: $\rho_1(\cdot,0)=\rho_2(\cdot,0)=\rho_0$ in $\Omega$. Therefore, it holds
\begin{equation}
	\label{rho12.eq}
	\begin{cases}
	\pa_t(\rho_1 - \rho_2) = \Delta(f(\rho_1)-f(\rho_2))&\mbox{in }Q,\\
	\nu\cdot\nabla (f(\rho_1)-f(\rho_2)) = 0&\mbox{on }\pa\Omega\times (0,T),\\
   	(\rho_1 - \rho_2)(\cdot,0) = 0 &\mbox{on }\Omega.
   	\end{cases}
\end{equation}
Define the function $u : Q\to\R$ as the unique solution to
\begin{equation}
	\label{def.u}
	\begin{cases}
		-\Delta u = \rho_1 - \rho_2 &\quad\mbox{in }\Omega,~~t>0,\\
		\nu\cdot\nabla u=0 & \quad\mbox{on }\pa\Omega,~~t>0,\\
		\int_\Omega u dx = 0& \quad t>0.
	\end{cases}
\end{equation}
Clearly $u\in L^\infty(0,T; H^1(\Omega))$. Thus, we can use $u$ as test function in \eqref{rho12.eq}. We obtain
\begin{align*}
	\int_\Omega|\na u(t)|^2 dx + \int_0^t\int_\Omega(\rho_1 - \rho_2)(f(\rho_1)-f(\rho_2))dx d\tau = 0,\quad t\in (0,T).
\end{align*}
Being $f$ non-decreasing, it follows that $u\equiv 0$ in $Q$ and so $\rho_1\equiv\rho_2$ on $Q$. The weak solution is therefore unique.\medskip\\
{\bf Monotonicity property.}
We now prove that, for a.e.~$x\in\Omega$, the mapping $t\in (0,\infty)\mapsto \min(\rho(x,t),\rho_{cr})\in\R$ is non-decreasing.

Let $\psi\in C^\infty_c(\Omega)$, $\psi\geq 0$ a.e.~in $\Omega$,
{$0<t_0<t$} arbitrary.
Since $\rhod\in L^2(0,T; H^1(\Omega))\cap L^\infty(Q_T)$, we can test \eqref{app.2} against $(\rhod - \rho_{cr})_-^3\psi$ and obtain
\begin{align*}
\frac{1}{4}&\int_\Omega (\rhod(t) - \rho_{cr})_-^4 \psi dx 
+ \delta\int_{t_0}^t\int_\Omega\rhod (\rhod - \rho_{cr})_-^3\psi dxdt'
+ 3\delta\int_{{t_0}}^t\int_\Omega (\rho - \rho_{cr})_-^2 |\na\rhod|^2\psi dxdt'\\
& +\delta\int_{{t_0}}^t\int_\Omega (\rhod - \rho_{cr})_-^3\na\psi\cdot\na\rhod dx
= \frac{1}{4}\int_\Omega (\rhod({t_0}) - \rho_{cr})_-^4\psi dx\\
& -\int_{{t_0}}^t\int_\Omega f_\delta'(\rhod)(\rhod - \rho_{cr})_-^3\na\rhod\cdot\na\psi dx dt'
- 3\int_{{t_0}}^t\int_\Omega f_\delta'(\rhod)(\rhod - \rho_{cr})_-^2 |\na\rhod|^2\psi dxdt'.
\end{align*}
However, since $f_\delta(s) = f(s) = 0$ for $s\leq\rho_{cr}$, the last two integrals on the right-hand side of the above identity vanish. Given that the third integral on the left-hand side is non-negative (since $\psi\geq 0$), we get
\begin{align*}
\frac{1}{4}&\int_\Omega (\rhod(t) - \rho_{cr})_-^4 \psi dx 
+ \delta\int_{{t_0}}^t\int_\Omega\rhod (\rhod - \rho_{cr})_-^3\psi dxdt'\\
& +\delta\int_{{t_0}}^t\int_\Omega (\rhod - \rho_{cr})_-^3\na\psi\cdot\na\rhod dx
\leq \frac{1}{4}\int_\Omega (\rhod({t_0}) - \rho_{cr})_-^4\psi dx.
\end{align*}
An integration by parts in the third integral on the left-hand side of the above inequality yields
\begin{align*}
\frac{1}{4}&\int_\Omega (\rhod(t) - \rho_{cr})_-^4 \psi dx 
+\delta\int_{{t_0}}^t\int_\Omega\left(
\rhod (\rhod - \rho_{cr})_-^3\psi - \frac{1}{4}(\rhod - \rho_{cr})_-^4\Delta\psi \right)dx dt'\\
&\leq \frac{1}{4}\int_\Omega (\rhod({t_0}) - \rho_{cr})_-^4\psi dx.
\end{align*}
Given the choice of $\psi$ and the uniform $L^\infty(Q_T)$ bounds for $\rhod$, the second integral on the left-hand side of the above inequality vanishes in the limit $\delta\to 0$, yielding
\begin{align*}
\int_\Omega (\rho(t) - \rho_{cr})_-^4 \psi dx \leq 
\int_\Omega (\rho({t_0}) - \rho_{cr})_-^4\psi dx,\quad t>{t_0>}0.
\end{align*}
Since $\psi$ is an arbitrary non-negative test function, we infer
\begin{align*}
(\rho(t) - \rho_{cr})_-^4 \leq (\rho({t_0}) - \rho_{cr})_-^4\quad
\mbox{a.e.~in }\Omega,~~ t>{t_0>}0.
\end{align*}
Since $(x)_- = -(-x)_+$ for every $x\in\R$, it follows
\begin{align*}
(\rho_{cr} - \rho(t))_+ \leq (\rho_{cr} - \rho({t_0}))_+\quad
\mbox{a.e.~in }\Omega,~~ t>{t_0>}0.
\end{align*}
Given that $\min(s,\rho_{cr}) = \rho_{cr} + (s - \rho_{cr})_-
= \rho_{cr} - (\rho_{cr}-s)_+$ for $s\in\R$, we conclude that, for a.e.~$x\in\Omega$, the mapping $t\in (0,\infty)\mapsto \min(\rho(x,t),\rho_{cr})\in\R$ is non-decreasing.
This finishes the proof of Theorem~\ref{thm.existence}.

\begin{rem} The monotonicity property showed in Theorem~\ref{thm.existence} implies in particular that
\begin{align*}
\inf_{\Omega\times(0,\infty)}\rho \geq \inf_\Omega\rho_0.
\end{align*}
As a consequence, if $\rho_0$ is uniformly positive in $\Omega$, then $\rho$ is uniformly positive in $\Omega\times(0,\infty)$.
\end{rem}

\section{Long-time behavior (proof of theorem \ref{thm.ltb.diff})}\label{sec.longtime}
In this section, we concentrate on proving Theorem \ref{thm.ltb.diff}. 
The proof is divided into several lemmas.

The following lemma yields the exponential convergent rate for the solution in the supercritical case.
\begin{lemma}\label{lem.exp}
	Let $F(\rho\vert\rho_\infty)$ as in \eqref{rel.entr}.
	If $\rho_\infty>\rho_{cr}$, then a constant $\lambda>0$ exists such that
	\begin{align*}
	\int_\Omega F(\rho(t)\vert\rho_\infty)dx \leq e^{-\lambda t}\int_\Omega F(\rho_0\vert\rho_\infty)dx\quad t>0.
	\end{align*}
\end{lemma}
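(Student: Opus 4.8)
The plan is to combine a free-energy dissipation identity with a functional inequality of Poincar\'e--Wirtinger type and then close the estimate via Gr\"onwall's lemma.

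First I would derive the dissipation identity. Since mass is conserved, $\frac{1}{|\Omega|}\int_\Omega\rho(t)\,dx=\rho_\infty$ for all $t>0$, so the linear term in \eqref{rel.entr} integrates to zero and $\int_\Omega F(\rho(t)\vert\rho_\infty)\,dx=\int_\Omega F(\rho(t))\,dx-|\Omega|F(\rho_\infty)$ differs from $\int_\Omega F(\rho)\,dx$ only by a constant. Recalling $F'=f$ and that, by \eqref{weaksol.1}, $f(\rho)\in L^2(0,T;H^1(\Omega))$ is an admissible test function while $\partial_t\rho\in L^2(0,\infty;H^1(\Omega)')$, I would use the chain rule for the duality pairing to write $\frac{d}{dt}\int_\Omega F(\rho)\,dx=\langle\partial_t\rho,f(\rho)\rangle$, then insert $\phi=f(\rho)$ into \eqref{weaksol.2} (extended from $C^\infty_c$ to $L^2(H^1)$ by density) to obtain
\[
\frac{d}{dt}\int_\Omega F(\rho(t)\vert\rho_\infty)\,dx=-\int_\Omega|\nabla f(\rho)|^2\,dx,\qquad t>0.
\]
The low regularity of the weak solution makes this chain rule the only delicate point; I would make it rigorous on the approximations $\rho^{(\delta)}$ of Section \ref{sec.ex}, where $f_\delta(\rho^{(\delta)})$ is a legitimate test function and all terms are smooth, and then pass to the limit $\delta\to0$, noting that weak lower semicontinuity may turn the identity into the inequality ``$\le$'', which is all that is needed below.

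The core is the functional inequality behind the stated $\lambda$: there exists $C>0$, depending only on $\|\rho_0\|_\infty,\rho_\infty,\rho_{cr},\Omega,f$, such that $\int_\Omega F(\rho\vert\rho_\infty)\,dx\le C\int_\Omega|\nabla f(\rho)|^2\,dx$ for every $\rho$ with $0\le\rho\le\|\rho_0\|_\infty$, mean value $\rho_\infty$ and $f(\rho)\in H^1(\Omega)$; this is precisely the content of Lemma \ref{lem.new.f}, and it is here that $\rho_\infty>\rho_{cr}$ is decisive. Applying the classical Poincar\'e--Wirtinger inequality to $w:=f(\rho)$ gives $\|w-\overline w\|_{L^2}^2\le C_P\int_\Omega|\nabla f(\rho)|^2\,dx$ with $\overline w=\frac{1}{|\Omega|}\int_\Omega w\,dx$, so it remains to bound $\int_\Omega F(\rho\vert\rho_\infty)\,dx$ by $\|f(\rho)-\overline w\|_{L^2}^2$. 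I would establish this by contradiction and compactness: assuming a sequence $\rho_n$ (same constraints) with $\int_\Omega|\nabla f(\rho_n)|^2\,dx\to0$, the Poincar\'e estimate forces $f(\rho_n)\to c$ in $L^2(\Omega)$ for some constant $c\ge0$ along a subsequence. The value $c=0$ is impossible: the mean constraint yields $\int_\Omega(\rho_n-\rho_{cr})_+\,dx\ge|\Omega|(\rho_\infty-\rho_{cr})>0$, and splitting $\{\rho_n>\rho_{cr}\}$ at a level $\theta$ slightly above $\rho_{cr}$ shows $|\{\rho_n>\theta\}|$ is bounded below, whence $\int_\Omega f(\rho_n)^2\,dx\ge f(\theta)^2|\{\rho_n>\theta\}|$ stays away from $0$. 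Hence $c>0$, and since $f$ is continuous and strictly increasing on $(\rho_{cr},\infty)$, inversion gives $\rho_n\to f^{-1}(c)$ a.e.; the $L^\infty$ bound together with the mean constraint force $f^{-1}(c)=\rho_\infty$, and dominated convergence gives $\int_\Omega F(\rho_n\vert\rho_\infty)\,dx\to0$.

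This compactness argument controls the regime bounded away from equilibrium; the complementary near-equilibrium regime, which I expect to be the main obstacle, is handled by linearisation: for $\rho$ close to $\rho_\infty$ one has $F(\rho\vert\rho_\infty)\sim\frac{1}{2f'(\rho_\infty)}(f(\rho)-f(\rho_\infty))^2$ and $\overline w\to f(\rho_\infty)$, so the quadratic, nondegenerate behaviour of $f$ at $\rho_\infty$ (guaranteed precisely by $f'(\rho_\infty)>0$, i.e.\ by $\rho_\infty>\rho_{cr}$) transfers the Poincar\'e bound from $f(\rho)$ to $F(\rho\vert\rho_\infty)$. Combining the dissipation inequality with this functional inequality gives, for $E(t):=\int_\Omega F(\rho(t)\vert\rho_\infty)\,dx$, the differential inequality $E'(t)\le-\tfrac{1}{C}E(t)$, and Gr\"onwall's lemma yields $E(t)\le e^{-t/C}E(0)$, i.e.\ the claim with $\lambda=1/C$. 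The two points demanding the most care are the rigorous chain rule for the energy identity at the available regularity and the uniform-in-$\rho$ quantitative transfer from $\int_\Omega|\nabla f(\rho)|^2\,dx$ to $\int_\Omega F(\rho\vert\rho_\infty)\,dx$ across the degeneracy of $f$ at $\rho_{cr}$.
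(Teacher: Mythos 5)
Your dissipation identity and the Gr\"onwall closure are exactly the paper's (the paper obtains \eqref{sunny.1} by testing \eqref{1} against $f(\rho)-f(\rho_\infty)$; your extra care about the chain rule at weak regularity is legitimate), and your far-from-equilibrium compactness argument is sound: the exclusion of the limit constant $c=0$ via the mass constraint, the inversion of $f$ above $\rho_{cr}$, and the identification $f^{-1}(c)=\rho_\infty$ closely parallel the paper's proof of its Lemma \ref{lem.new.f}. Where you diverge is in how the gradient term is converted into the relative energy: the paper proves the \emph{nonlinear} Poincar\'e inequality $\|f(\rho)-f(\rho_\infty)\|_{L^2(\Omega)}\le C\|\na f(\rho)\|_{L^2(\Omega)}$ directly (Lemma \ref{lem.new.f}; note this, plus the pointwise bound $F(s\vert\rho_\infty)\le C|f(s)-f(\rho_\infty)|^2$ obtained by De L'Hospital, is what is actually needed --- the lemma alone is not ``precisely'' your functional inequality), whereas you go through the classical Poincar\'e--Wirtinger inequality for $w=f(\rho)$ and then try to pass from $\|w-\overline w\|_{L^2(\Omega)}$ to the relative energy by a compactness-plus-linearisation split.

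The gap is in your near-equilibrium regime, and it is not a technicality: it is the heart of the lemma. Since $\|w-f(\rho_\infty)\|_{L^2(\Omega)}^2=\|w-\overline w\|_{L^2(\Omega)}^2+|\Omega|\,|\overline w-f(\rho_\infty)|^2$, what you must prove is the \emph{quantitative} bound $|\overline w-f(\rho_\infty)|\le C\|w-\overline w\|_{L^2(\Omega)}$. The qualitative statement ``$\overline w\to f(\rho_\infty)$'' cannot deliver it: near equilibrium both $|\overline w-f(\rho_\infty)|$ and $\|w-\overline w\|_{L^2(\Omega)}$ tend to zero, and it is precisely their ratio that must be controlled --- the same rate-comparison problem which, as you yourself observe, prevents your compactness argument from producing a contradiction near equilibrium simply reappears inside your linearisation step. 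Worse, the needed bound is \emph{false} without the constraint $\frac{1}{|\Omega|}\int_\Omega\rho\,dx=\rho_\infty$ (take $w$ constant $\ne f(\rho_\infty)$: the right-hand side vanishes), yet your near-equilibrium paragraph never invokes that constraint, so it cannot be complete as written. Two ways to close it: (i) the paper's device in Lemma \ref{lem.new.f} --- normalise $u_n=(f(\rho_n)-f(\rho_\infty))/\|f(\rho_n)-f(\rho_\infty)\|_{L^2(\Omega)}$ \emph{before} passing to the limit, so $u_n\to u$, a nonzero constant, and derive the contradiction from $(\rho_n-\rho_\infty)/\|f(\rho_n)-f(\rho_\infty)\|_{L^2(\Omega)}\to u/f'(\rho_\infty)\neq 0$ in $L^1(\Omega)$, incompatible with zero mean; this treats both regimes at once and makes the two-regime split unnecessary; or (ii) keep your decomposition but use the mass constraint quantitatively via Taylor's formula, writing $\overline w-f(\rho_\infty)=\frac{1}{|\Omega|}\int_\Omega\bigl[f(\rho)-f(\rho_\infty)-f'(\rho_\infty)(\rho-\rho_\infty)\bigr]dx$ and estimating the integrand by the modulus of continuity of $f'$ where $|\rho-\rho_\infty|$ is small and by $C|f(\rho)-f(\rho_\infty)|^2$ on the complement; this yields $|\Omega|^{1/2}|\overline w-f(\rho_\infty)|\le\tfrac12\|w-f(\rho_\infty)\|_{L^2(\Omega)}$ once $\|w-f(\rho_\infty)\|_{L^2(\Omega)}$ is small, which is what the absorption requires.
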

\begin{proof}
	Testing \eqref{1} against $f(\rho)-f(\rho_\infty)$ yields
	\begin{align}\label{sunny.1}
	\frac{d}{dt}\int_\Omega F(\rho(t)\vert\rho_\infty)dx = - \int_\Omega |\na f(\rho(t))|^2 dx.
	\end{align}
	Lemma \ref{lem.new.f} (see Appendix) implies
	\begin{align*}
	\frac{d}{dt}\int_\Omega F(\rho(t)\vert\rho_\infty)dx \leq 
	- c\int_\Omega |f(\rho(t)) - f(\rho_\infty)|^2 dx.
	\end{align*}
	On the other hand, since $\rho_\infty > \rho_{cr}$, it follows
	\begin{align*}
	F(\rho(t)\vert\rho_\infty) \leq C |f(\rho(t)) - f(\rho_\infty)|^2,\quad t>0.
	\end{align*}
	Indeed, $\rho\in L^\infty(\Omega\times (0,\infty))$, so the above inequality will follow provided that
	\begin{align*}
	\frac{F(s\vert\rho_\infty)}{|f(s) - f(\rho_\infty)|^2}\leq C\quad\mbox{as }s\to\rho_{\infty},
	\end{align*}
	which is easily verified by employing De L'Hospital's theorem. We deduce
	\begin{align*}
	\frac{d}{dt}\int_\Omega F(\rho(t)\vert\rho_\infty)dx \leq 
	- \lambda\int_\Omega F(\rho(t)\vert\rho_\infty) dx,\quad t>0.
	\end{align*}
	Gronwall's Lemma yields the statement. This finishes the proof of the Lemma.
\end{proof}

\begin{lemma}\label{lem.lt.easy}
Let $\rho$ be the weak solution to \eqref{1}--\eqref{2} according to Theorem~\ref{thm.existence}.\smallskip\\
If $\rho_\infty \geq \rho_{cr}$, then 
\begin{align}\label{cnv.super.lemma}
\rho(t)\to\rho_\infty \quad \mbox{ as } t\to\infty \mbox{ strongly in } L^p(\Omega),\quad  \forall 1\leq p < \infty.
\end{align}
If $\rho_\infty < \rho_{cr}$, then
\begin{align}\label{cnv.sub.lemma}
(\rho(t)-\rho_{cr})_+\to 0 \quad \mbox{ as } t\to\infty \mbox{ strongly in } L^p(\Omega),\quad  \forall 1\leq p < \infty.
\end{align}
\end{lemma}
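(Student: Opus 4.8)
The plan is to start from the free energy identity \eqref{sunny.1}, which after integration in time over $(0,\infty)$ yields
\[
\int_\Omega F(\rho(t)\vert\rho_\infty)dx + \int_0^\infty\int_\Omega |\nabla f(\rho)|^2 dx\,d\tau = \int_\Omega F(\rho_0\vert\rho_\infty)dx < \infty.
\]
In particular $\int_0^\infty\|\nabla f(\rho(\tau))\|_{L^2(\Omega)}^2 d\tau < \infty$, so there is a sequence $t_n\to\infty$ along which $\nabla f(\rho(t_n))\to 0$ strongly in $L^2(\Omega)$. By the Poincar\'e--Wirtinger inequality this forces $f(\rho(t_n))$ to converge (in $L^2$, hence along a further subsequence a.e.) to its spatial average, i.e. to a \emph{constant}. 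The first task is therefore to identify that constant and upgrade convergence along $t_n$ to convergence as $t\to\infty$.

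For the identification I would use the two conserved/monotone quantities available. Mass conservation (take $\phi\equiv 1$ in \eqref{weaksol.2}) gives $\frac{1}{|\Omega|}\int_\Omega\rho(t)dx = \rho_\infty$ for all $t$. The monotonicity property from Thr.~\ref{thm.existence}, namely that $t\mapsto\min(\rho(x,t),\rho_{cr})$ is nondecreasing for a.e.~$x$, gives pointwise control of the subcritical part. In the case $\rho_\infty\geq\rho_{cr}$: since $f$ is strictly increasing above $\rho_{cr}$ and constant below it, the relation $F(s\vert\rho_\infty)\to 0$ combined with the free-energy decay should force $\rho(t)\to\rho_\infty$; more precisely, $f(\rho(t_n))\to$ const together with $\int_\Omega\rho(t_n)=\rho_\infty|\Omega|$ and the monotonicity of $f$ pins the limit of $\rho(t_n)$ to $\rho_\infty$ a.e., and then the uniform $L^\infty$ bound upgrades a.e.~convergence to strong $L^p$ convergence for every $p<\infty$ by dominated convergence. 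In the case $\rho_\infty<\rho_{cr}$, the limit constant for $f(\rho(t_n))$ must be $0$ (otherwise the supercritical region would carry too much mass, contradicting $\int_\Omega\rho=\rho_\infty|\Omega|<\rho_{cr}|\Omega|$ via the monotonicity estimate $(\rho_{cr}-\rho(t))_+\leq(\rho_{cr}-\rho_0)_+$), whence $f(\rho(t_n))\to 0$ and thus $(\rho(t_n)-\rho_{cr})_+\to 0$ in $L^p$.

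The main obstacle is promoting convergence along the subsequence $t_n$ to convergence of the full family as $t\to\infty$, since the free-energy estimate by itself only yields a subsequence. I expect to resolve this by showing that the relevant quantity is itself monotone or otherwise tamed in time: for $\rho_\infty\geq\rho_{cr}$ the free energy $t\mapsto\int_\Omega F(\rho(t)\vert\rho_\infty)dx$ is nonincreasing by \eqref{sunny.1}, so its convergence to $0$ along $t_n$ forces convergence to $0$ for the whole family, and strict convexity of $F(\cdot\vert\rho_\infty)$ near $\rho_\infty$ then translates this into $L^p$ convergence of $\rho(t)$ itself. For $\rho_\infty<\rho_{cr}$ the quantity $\|(\rho(t)-\rho_{cr})_+\|_{L^1(\Omega)} = \frac{1}{?}\cdots$ is controlled because, by mass conservation and the monotone decay of the subcritical deficit, its time-integral decay must be to $0$; here the cleaner route is to observe that $\int_\Omega F(\rho(t)\vert\rho_\infty)dx$ is still nonincreasing and bounds $\|(\rho(t)-\rho_{cr})_+\|_{L^1}$ from above (since $F(s\vert\rho_\infty)$ vanishes for $s\leq\rho_{cr}$ and grows for $s>\rho_{cr}$), so the monotone limit of the free energy controls the whole family uniformly and yields \eqref{cnv.sub.lemma} without passing to a subsequence.
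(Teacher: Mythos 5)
Your argument has a genuine gap in the critical case $\rho_\infty=\rho_{cr}$, which is included in statement \eqref{cnv.super.lemma}. The step you rely on to pass from the subsequence $t_n$ to the full family — ``the free energy is nonincreasing, hence tends to $0$, and strict convexity of $F(\cdot\vert\rho_\infty)$ near $\rho_\infty$ translates this into $L^p$ convergence of $\rho(t)$'' — is only valid for $\rho_\infty>\rho_{cr}$. When $\rho_\infty=\rho_{cr}$ one has $f(\rho_\infty)=0$ and $F(\rho_\infty)=0$, so by \eqref{rel.entr} the relative free energy reduces to $F(s\vert\rho_\infty)=F(s)$, which vanishes identically on $[0,\rho_{cr}]$; it is not strictly convex at $\rho_\infty$, and the convergence $\int_\Omega F(\rho(t)\vert\rho_\infty)dx\to 0$ gives no control whatsoever on the subcritical deficit $(\rho(t)-\rho_{cr})_-$: any density lying entirely below $\rho_{cr}$ has zero free energy, whatever its shape. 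Thus your mechanism only yields $(\rho(t)-\rho_{cr})_+\to 0$, not $\rho(t)\to\rho_{cr}$. The missing ingredient is mass conservation applied at \emph{every} time $t$ (not just along $t_n$, where you do invoke it): since $\fint_\Omega\rho(t)\,dx=\rho_\infty=\rho_{cr}$,
\begin{align*}
\int_\Omega |(\rho(t)-\rho_{cr})_-|\, dx
= \int_\Omega (\rho(t)-\rho_{cr})_+\, dx - \int_\Omega (\rho(t)-\rho_{cr})\, dx
= \int_\Omega (\rho(t)-\rho_{cr})_+\, dx \longrightarrow 0,
\end{align*}
so that $\rho(t)\to\rho_{cr}$ in $L^1(\Omega)$, and the uniform $L^\infty$ bound upgrades this to $L^p(\Omega)$ for all $p<\infty$. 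This is precisely how the paper handles the critical case (its Case 3), after first running the subcritical argument to get $(\rho(t)-\rho_{cr})_+\to 0$.

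Apart from this, your proof is essentially correct. For $\rho_\infty<\rho_{cr}$ your argument coincides with the paper's (the only cosmetic flaw is the claim that the free energy ``bounds $\|(\rho(t)-\rho_{cr})_+\|_{L^1}$ from above'': there is no linear lower bound on $F$ in general, e.g.\ under \eqref{hp.kappa} one has $F(s)\sim(s-\rho_{cr})^{\kappa+1}$; but Chebyshev's inequality applied to $\{\rho(t)\geq\rho_{cr}+\delta\}$ together with the $L^\infty$ bound gives the same conclusion). For $\rho_\infty>\rho_{cr}$ your route — subsequence identification plus monotonicity of the free energy plus the quadratic lower bound $F(s\vert\rho_\infty)\geq c|s-\rho_\infty|^2$ on compact sets — is valid and somewhat more elementary than the paper's, which instead invokes the exponential decay of Lemma \ref{lem.exp}, itself based on the generalized Poincar\'e inequality of Lemma \ref{lem.new.f}; what you lose is the explicit exponential rate, which the lemma does not require (but the theorem's estimate \eqref{lt.exp} does).
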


\begin{proof}
We distinguish three cases:\medskip\\
{\em Case 1: $\rho_\infty > \rho_{cr}$.}
Relation \eqref{cnv.super.lemma} follows immediately from Lemma \ref{lem.exp} and the $L^\infty(\Omega\times (0,\infty))$ bounds for $\rho$.
\medskip\\
{\em Case 2: $\rho_\infty < \rho_{cr}$.} 
Integrating \eqref{sunny.1} in time yields
\begin{align*}
\int_{\Omega}F(\rho(t))dx + \int_0^t\int_\Omega |\nabla f(\rho)|^2 dx d\tau = \int_{\Omega}F(\rho_0)dx,\quad t>0.
\end{align*}
Due to the above free energy identity, we have a sequence $t_n\to\infty$ such that 
\begin{align*}
\int_\Omega |\nabla f(\rho(t_n))|^2 dx \to 0\quad\mbox{as }n\to\infty .
\end{align*}
Poincar\'e's Lemma yields
\begin{align*}
f(\rho(t_n)) - \avg{f(\rho(t_n))}\to 0\quad\mbox{strongly in }L^2(\Omega).
\end{align*}
On the other hand, the time-uniform $L^\infty$ bounds for $\rho$ imply that
$\avg{f(\rho(t_n))}$ is bounded, from which we obtain a subsequence $(t_{n_k})$ of $(t_n)$ such that 
\begin{align*}
\avg{f(\rho(t_{n_k}))}\to \zeta\quad\mbox{as }k\to\infty .
\end{align*}
Therefore,
\begin{align}\label{frho.tnk}
f(\rho(t_{n_k})) \to \zeta \geq 0\quad\mbox{strongly in }L^2(\Omega).
\end{align}
We claim that the number $\zeta$ appearing in \eqref{frho.tnk} is zero. In fact, assume by contradiction that $\zeta>0$. 
From \eqref{hp.f} it follows that $\rho(t_{n_k})\to f^{-1}(\zeta)>\rho_{cr}$ a.e.~in $\Omega$, implying thanks to the $L^\infty(\Omega)$ bounds for $\rho(t_{n_k})$ that 
$\rho(t_{n_k})\to f^{-1}(\zeta)>\rho_{cr}$ strongly in $L^1(\Omega)$, against mass conservation. So $\zeta = 0$, meaning that $f(\rho(t_{n_k}))\to 0$ a.e.~in $\Omega$. 
Since \eqref{hp.f} holds, it follows that 
\begin{align*}
\limsup_{k\to\infty}\rho(t_{n_k})\leq\rho_{cr}\quad\mbox{a.e.~in }\Omega.
\end{align*}
From the definition of $F$ and \eqref{hp.f} we deduce
\begin{align*}
\limsup_{k\to\infty}F(\rho(t_{n_k}))\leq 
F(\limsup_{k\to\infty}\rho(t_{n_k}))\leq
F(\rho_{cr})=0 \quad\mbox{a.e.~in }\Omega,
\end{align*}
which implies (given that $F$ is non-negative)
\begin{align*}
F(\rho(t_{n_k}))\to 0\quad\mbox{a.e.~in }\Omega \,.
\end{align*}
Given the uniform $L^\infty$ bounds for $\rho$, it follows that 
\begin{align*}
\lim_{k\to\infty}\int_\Omega F(\rho(t_{n_k})) dx = 0.
\end{align*}
However, since
\begin{align*}
\frac{d}{dt}\int_\Omega F(\rho(t)) dx = -\int_\Omega |\nabla f(\rho(t))|^2 dx\leq 0,
\end{align*}
the function $t\mapsto \int_\Omega F(\rho(t)) dx$ is non-increasing, meaning that
\begin{align*}
\lim_{t\to\infty}\int_\Omega F(\rho(t)) dx = 0.
\end{align*}
It follows that $F(\rho(t))\to 0$ a.e.~in $\Omega$, easily implying that 
\begin{align*}
\limsup_{t\to\infty}\rho(t)\leq\rho_{cr}\quad\mbox{a.e.~in }\Omega.
\end{align*}
This in turn yields that $(\rho(t)-\rho_{cr})_+\to 0$ a.e.~in $\Omega$. This fact, together with the $L^\infty$ bounds for $\rho$, yields 
\eqref{cnv.sub.lemma}. \medskip\\
{\em Case 3: $\rho_\infty = \rho_{cr}$.} It is straightforward to see that the arguments in Case 2 apply also to this case, assuming that \eqref{cnv.sub.lemma} holds. However, mass conservation implies
\begin{align*}
\int_\Omega |(\rho(t)-\rho_{cr})_-| dx = &
-\int_\Omega (\rho(t)-\rho_{\infty})_- dx = \int_\Omega (\rho(t)-\rho_{\infty})_+ dx - \int_\Omega (\rho(t)-\rho_{\infty}) dx\\
= & \int_\Omega (\rho(t)-\rho_{cr})_+ dx\to 0\quad\mbox{as }t\to\infty,
\end{align*}
meaning that $\rho(t)-\rho_{cr}\to 0$ strongly in $L^1(\Omega)$ as $t\to\infty$. This fact, together with the $L^\infty$ bounds for $\rho$, yields \eqref{cnv.super}.
This finishes the proof of the lemma.
\end{proof}

The results in Theorem~\ref{thm.ltb.diff} related to the supercritical case $\rho_\infty \geq \rho_{cr}$ have now been proven. We proceed with the subcritical case $\rho_\infty < \rho_{cr}$.
\begin{lemma}\label{lem.new}
	Let $\rho$ be the weak solution to \eqref{1} according to Theorem~\ref{thm.existence}, and let $\rho_\infty < \rho_{cr}$.
	The following statements hold:
	\begin{enumerate}
		\item A function $\hat{\rho}\in L^\infty(\Omega)$ exists such that 
		$\rho(t)\to\hat\rho$ strongly in $L^p(\Omega)$ for every $p<\infty$ as $t\to\infty$. Furthermore, it holds $\hat{\rho}\leq\rho_{cr}$ a.e.~in $\Omega$.
		\item If $\mbox{meas}(\{ \rho_0 > \rho_{cr} \}) > 0$, then
		there exists $T^*>0$ such that 
		$\int_0^{T^*}\int_\Omega |\nabla\rho|^2 dx dt = \infty$.
	\end{enumerate}
\end{lemma}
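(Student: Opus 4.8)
The plan is to prove the two statements separately, since they rely on rather different ideas. For part (1), the key structural input is the monotonicity property established in Theorem \ref{thm.existence}: for a.e.\ $x\in\Omega$, the map $t\mapsto\min(\rho(x,t),\rho_{cr})$ is nondecreasing. Since this map is also bounded above by $\rho_{cr}$, it converges pointwise a.e.\ to some limit, which I will call $g(x)\le\rho_{cr}$. First I would combine this with Lemma \ref{lem.lt.easy}, which in the subcritical case gives $(\rho(t)-\rho_{cr})_+\to 0$ strongly in every $L^p(\Omega)$, $p<\infty$. The identity $\rho=\min(\rho,\rho_{cr})+(\rho-\rho_{cr})_+$ then lets me write $\rho(t)=\min(\rho(t),\rho_{cr})+(\rho(t)-\rho_{cr})_+$, where the first term converges a.e.\ to $g$ and the second converges to $0$ in $L^p$. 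Passing to a subsequence along which the second term also converges a.e., I conclude that $\rho(t)\to\hat\rho:=g$ a.e.\ in $\Omega$, with $\hat\rho\le\rho_{cr}$. Dominated convergence (using the uniform $L^\infty$ bound for $\rho$) upgrades this to strong $L^p(\Omega)$ convergence for every $p<\infty$, and since the monotone part forces the limit to be independent of the chosen subsequence, the convergence holds for the full family $t\to\infty$, not merely along a subsequence.

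The slightly delicate point in part (1) is ensuring that the limit $\hat\rho$ is genuinely the full-time limit rather than a subsequential one. The monotonicity of $\min(\rho,\rho_{cr})$ pins down the below-critical part uniquely, while the above-critical part vanishes in $L^p$; together these exclude oscillation, so I expect no real obstacle here beyond carefully combining a.e.\ and norm convergence.

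For part (2), I would argue by contradiction. Suppose that $\int_0^{T^*}\int_\Omega|\nabla\rho|^2\,dx\,dt<\infty$ for every $T^*>0$, so in particular $\rho\in L^2(0,T;H^1(\Omega))$ on every time interval. The idea is that finite Dirichlet energy of $\rho$ itself (not merely of $f(\rho)$) is incompatible with the degenerate structure of the equation once the density genuinely exceeds $\rho_{cr}$ on a positive-measure set. Concretely, I would exploit the flux $\nabla f(\rho)=f'(\rho)\nabla\rho$, which vanishes identically on $\{\rho\le\rho_{cr}\}$ and is controlled by $\nabla\rho$ on $\{\rho>\rho_{cr}\}$; if $\nabla\rho\in L^2$ held, then the equation $\partial_t\rho=\Delta f(\rho)$ could be tested against $\rho$ or against a truncation to derive an energy estimate that contradicts the hypothesis $\mathrm{meas}(\{\rho_0>\rho_{cr}\})>0$. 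The mechanism I have in mind is that the set $\{\rho>\rho_{cr}\}$ must shrink to null measure as $t\to\infty$ (by part (1) and Lemma \ref{lem.lt.easy}), and the transition across the free boundary $\{\rho=\rho_{cr}\}$ produces a singularity in $\nabla\rho$ that the degenerate diffusion cannot regularize.

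The main obstacle, and the crux of part (2), is to make this last heuristic rigorous: one must show that the vanishing of the positive part $(\rho-\rho_{cr})_+$ together with the persistence of an initial super-critical region forces $\int_0^{T^*}\int_\Omega|\nabla\rho|^2$ to blow up on some finite interval. I would try to capture this by choosing a test function adapted to the level set $\{\rho=\rho_{cr}\}$—for instance testing against a function of the form $\eta(\rho)$ with $\eta$ concentrating near $\rho_{cr}$—and quantifying the interplay between the finite $L^2$ bound on $\nabla f(\rho)$ (which we have, from \eqref{weaksol.1}) and the hypothetical finite $L^2$ bound on $\nabla\rho$. The tension is that $f'(\rho)\to 0$ as $\rho\downarrow\rho_{cr}$ under \eqref{hp.f}, so $\nabla f(\rho)$ can stay bounded while $\nabla\rho$ does not; the super-critical mass that must be evacuated provides exactly the quantitative lower bound needed to drive $\int|\nabla\rho|^2$ to infinity. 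Establishing this quantitative incompatibility is where I expect the real work to lie.
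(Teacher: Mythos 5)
Your argument for part (1) is correct and essentially identical to the paper's: monotonicity of $t\mapsto\min(\rho(x,t),\rho_{cr})$ from Theorem \ref{thm.existence} pins down the subcritical part, Lemma \ref{lem.lt.easy} kills the supercritical part, and the decomposition $\rho=\min(\rho,\rho_{cr})+(\rho-\rho_{cr})_+$ together with the uniform $L^\infty$ bound yields full-time strong $L^p$ convergence to some $\hat\rho\le\rho_{cr}$. (Your detour through a.e.-convergent subsequences is unnecessary --- since the first term converges a.e.\ and the second converges to $0$ in $L^p$, the triangle inequality already gives the full limit --- but this is cosmetic.)

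Part (2), however, has a genuine gap: you set up the contradiction correctly but never identify the mechanism that closes it, and the mechanism you gesture at is not the right one. You hope to derive a \emph{quantitative} lower bound forcing $\int_0^{T^*}\int_\Omega|\nabla\rho|^2\,dx\,dt$ to blow up, via a singularity of $\nabla\rho$ at the free boundary, and you explicitly defer "the real work" of establishing this incompatibility. No such quantitative estimate is needed, and trying to produce one is the wrong direction. The paper's argument is purely qualitative: the sole role of the hypothesis $\nabla\rho\in L^2(0,T;L^2(\Omega))$ is to \emph{legitimize test functions depending on $\rho$ below the critical threshold}, such as $(\rho-\rho_{cr})_-^3\psi$ with $\psi\in C^\infty_c(\Omega)$, $\psi\ge 0$. (These cannot be used in general, because below $\rho_{cr}$ the function $f(\rho)$ vanishes identically and gives no control on $\nabla\rho$.) Testing \eqref{1} against $(\rho-\rho_{cr})_-^3\psi$, every flux term contains a factor $f'(\rho)(\rho-\rho_{cr})_-^k$, which vanishes identically by \eqref{hp.f} (the supports of $f'(\rho)$ and of $(\rho-\rho_{cr})_-$ are disjoint). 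One obtains the exact conservation law
\begin{align*}
\int_\Omega(\rho(t)-\rho_{cr})_-^4\,\psi\,dx=\int_\Omega(\rho_0-\rho_{cr})_-^4\,\psi\,dx
\qquad\mbox{for all }t>0,
\end{align*}
i.e.\ $\min(\rho(t),\rho_{cr})=\min(\rho_0,\rho_{cr})$ a.e.: the subcritical profile is frozen for all time. Combining this with part (1) and Lemma \ref{lem.lt.easy} gives $\hat\rho=\min(\rho_0,\rho_{cr})$, and then
\begin{align*}
\int_\Omega(\rho_0-\hat\rho)\,dx=\int_{\{\rho_0>\rho_{cr}\}}(\rho_0-\rho_{cr})\,dx>0,
\end{align*}
which contradicts mass conservation. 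So the supercritical mass has nowhere to go --- not because $\nabla\rho$ must blow up quantitatively at a rate you can estimate, but because finiteness of $\int\int|\nabla\rho|^2$ would freeze the only region able to receive that mass. This is the idea your proposal is missing.
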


\begin{proof}
	Let us show that the statement 1 is true. From Theorem~\ref{thm.existence} it follows that $\min(\rho(t),\rho_{cr})$ is a.e.~convergent in $\Omega$ as $t\to\infty$. Furthermore, from Lemma \ref{lem.lt.easy} it follows that $(\rho(t)-\rho_{cr})_+$ is a.e.~convergent in $\Omega$ as $t\to\infty$. Since $\rho(t) = \min(\rho(t),\rho_{cr}) + (\rho(t)-\rho_{cr})_+$, we deduce that $\rho(t)$ is a.e.~convergent in $\Omega$ as $t\to\infty$ towards some limit function $\hat\rho$.
	However, given the uniform $L^\infty$ bounds for $\rho$, we conclude
	that $\rho(t)\to\hat\rho$ strongly in $L^p(\Omega)$ for every $p<\infty$. The upper bound on $\hat{\rho}$ follows immediately from Theorem~\ref{thm.ltb.diff}. Thus Statement 1 holds.
	
	Let us now prove the second statement. We proceed by contradiction. Assume that $\mbox{meas}(\{ \rho_0 > \rho_{cr} \}) > 0$ and that
	$\na\rho\in L^2(0,T; L^2(\Omega))$ for every $T>0$.
	It follows that, given any $\psi\in C^\infty_c(\Omega)$, we can employ $(\rho - \rho_{cr})_-^3\psi$ as a test function in \eqref{1},
	obtaining
	\begin{align*}
	\frac{1}{4}&\int_\Omega (\rho(t) - \rho_{cr})_-^4 \psi dx 
	= \frac{1}{4}\int_\Omega (\rho_0 - \rho_{cr})_-^4\psi dx\\
	& -\int_0^t\int_\Omega (\rho - \rho_{cr})_-^3
	\na f(\rho)\cdot\na\psi dx dt'
	- \int_0^t\int_\Omega \na[(\rho - \rho_{cr})_-^3]\cdot 
	\na f(\rho)\, \psi dxdt'
	\end{align*}
	for $t\in (0,T)$.
	Since $\na\rho\in L^2(0,T; L^2(\Omega))$ we can rewrite the above identity as
	\begin{align*}
	\frac{1}{4}&\int_\Omega (\rho(t) - \rho_{cr})_-^4 \psi dx 
	= \frac{1}{4}\int_\Omega (\rho_0 - \rho_{cr})_-^4\psi dx\\
	& -\int_0^t\int_\Omega (\rho - \rho_{cr})_-^3 f'(\rho)
	\na \rho\cdot\na\psi dx dt'
	- 3\int_0^t\int_\Omega (\rho - \rho_{cr})_-^2 f'(\rho)
	|\na \rho|^2\, \psi dxdt'.
	\end{align*}
	Clearly the last two integrals on the right-hand side of the above inequality vanish due to the properties of $f$. We deduce
	\begin{align*}
	\frac{1}{4}&\int_\Omega (\rho(t) - \rho_{cr})_-^4 \psi dx 
	= \frac{1}{4}\int_\Omega (\rho_0 - \rho_{cr})_-^4\psi dx
	\end{align*}
	for every $\psi\in C^\infty_c(\Omega)$, yielding
	$(\rho(t) - \rho_{cr})_-^4 = (\rho_0 - \rho_{cr})_-^4$ a.e.~in $\Omega$, and so
	\begin{align*}
	\min(\rho(t),\rho_{cr}) = \rho_{cr} +
	(\rho(t) - \rho_{cr})_- = (\rho_0 - \rho_{cr})_- + \rho_{cr}
	= \min(\rho_0,\rho_{cr})\quad\mbox{a.e.~in }\Omega,~~ t\in (0,T).
	\end{align*}
	Being $T$ arbitrary, it means that the above identity holds for every $t>0$. As a consequence,
	\begin{align*}
	\rho(t) = \min(\rho(t),\rho_{cr}) + (\rho(t)-\rho_{cr})_+ = 
	\min(\rho_0,\rho_{cr}) + (\rho(t)-\rho_{cr})_+,\quad t>0.
	\end{align*}
	Taking the limit $t\to\infty$ on both sides of the above equality and employing Lemma \ref{lem.lt.easy} as well as Statement 1 lead to
	\begin{align*}
	\hat\rho = \min(\rho_0,\rho_{cr})\quad\mbox{a.e.~in }\Omega\,.
	\end{align*}
	However, the above identity together with the assumption on the initial datum yield
	\begin{align*}
	\int_\Omega (\rho_0 - \hat\rho)dx = 
	\int_\Omega(\rho_0 - \min(\rho_0,\rho_{cr}))dx = 
	\int_{ \{\rho_0 > \rho_{cr}\} }(\rho_0 - \rho_{cr})dx > 0,
	\end{align*}
	against mass conservation. Therefore, Statement 2 holds.
	This finishes the proof of the lemma.
\end{proof}

\begin{lemma}\label{lem.new.3}
	Let $\Omega$ connected, $F$ as in \eqref{rel.entr}, $\rho_\infty < \rho_{cr}$ and assume \eqref{hp.kappa} holds. Then,
	\begin{align}\label{F.decay.lem}
	\int_\Omega F(\rho(t))dx \leq C t^{- \frac{\kappa + 1}{\kappa - 1} }\quad\mbox{as }t\to\infty.
	\end{align}
\end{lemma}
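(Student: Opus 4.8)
The plan is to reduce \eqref{F.decay.lem} to an autonomous differential inequality for the free energy $E(t):=\int_\Omega F(\rho(t))\,dx$ and then integrate it. Since $\rho_\infty<\rho_{cr}$ we have $f(\rho_\infty)=F(\rho_\infty)=0$, so $F(s\vert\rho_\infty)=F(s)$ by \eqref{rel.entr}, and the energy identity \eqref{sunny.1} already established reads $E'(t)=-\int_\Omega|\na f(\rho(t))|^2\,dx$. I aim to prove a bound of the form $-E'(t)\ge c\,E(t)^{1+\alpha}$ with $\alpha:=\frac{\kappa-1}{\kappa+1}>0$; differentiating $t\mapsto E(t)^{-\alpha}$ then gives $\frac{d}{dt}E^{-\alpha}\ge\alpha c$, whence $E(t)^{-\alpha}\ge E(0)^{-\alpha}+\alpha c\,t\ge\alpha c\,t$ and $E(t)\le(\alpha c\,t)^{-1/\alpha}=C\,t^{-\frac{\kappa+1}{\kappa-1}}$, which is exactly \eqref{F.decay.lem} (the degenerate case $E(0)=0$ forces $\rho_0\le\rho_{cr}$ a.e., so $\rho\equiv\rho_0$ is stationary and the bound is trivial).

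The decisive ingredient is a \emph{time-uniform} Poincar\'e inequality for $w:=f(\rho(t))$, which I extract from the fact that $w$ vanishes on a set whose measure is bounded below uniformly in $t$. Indeed, by mass conservation $\int_\Omega\rho(t)\,dx=\rho_\infty|\Omega|$, while $\rho(t)\ge 0$ everywhere and $\rho(t)>\rho_{cr}$ on $\{\rho(t)>\rho_{cr}\}$; hence $\rho_\infty|\Omega|\ge\rho_{cr}\,\mathrm{meas}(\{\rho(t)>\rho_{cr}\})$, so that $\mathrm{meas}(\{\rho(t)\le\rho_{cr}\})\ge m_0:=|\Omega|\frac{\rho_{cr}-\rho_\infty}{\rho_{cr}}>0$ for every $t>0$. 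Since $f=0$ on $[0,\rho_{cr}]$ by \eqref{hp.f}, the function $w=f(\rho(t))\in H^1(\Omega)$ vanishes on a set of measure at least $m_0$, the lower bound $m_0$ being independent of $t$.

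A Poincar\'e-type inequality for $H^1(\Omega)$ functions vanishing on a set of measure at least $m_0$, with a constant depending only on $m_0$ and $\Omega$ (provable by a routine compactness/contradiction argument: a normalised sequence with $\|\na w_n\|_{L^2}\to0$ converges in $L^2$ by Rellich to a nonzero constant, contradicting that each $w_n$ vanishes on a set of measure $\ge m_0$), then yields $\|w\|_{L^2(\Omega)}\le C\|\na w\|_{L^2(\Omega)}$ uniformly in $t$. On the other hand, Assumption \eqref{hp.kappa} with $R=\|\rho_0\|_\infty$ gives the pointwise comparison $F(\rho)\le C\,f(\rho)^{(\kappa+1)/\kappa}$: integrating $c_R(s-\rho_{cr})^\kappa\le f(s)\le C_R(s-\rho_{cr})^\kappa$ over $[\rho_{cr},\rho]$ bounds $F(\rho)$ by $C(\rho-\rho_{cr})_+^{\kappa+1}\le C\,f(\rho)^{(\kappa+1)/\kappa}$, both sides vanishing where $\rho\le\rho_{cr}$. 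Writing $q:=\frac{\kappa+1}{\kappa}\in(1,2)$ and using $|\Omega|<\infty$, I combine these into $E=\int_\Omega F(\rho)\,dx\le C\int_\Omega w^{q}\,dx=C\|w\|_{L^q(\Omega)}^q\le C\|w\|_{L^2(\Omega)}^q\le C\|\na w\|_{L^2(\Omega)}^q=C\,(-E')^{q/2}$. Because $q/2=\frac{\kappa+1}{2\kappa}=\frac{1}{1+\alpha}$, raising to the power $2/q$ gives precisely $-E'\ge c\,E^{1+\alpha}$, closing the differential inequality announced in the first paragraph.

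The main obstacle is the \emph{uniformity in time} of the Poincar\'e constant: everything hinges on the lower bound $m_0$ on the measure of the subcritical region $\{\rho\le\rho_{cr}\}$, and this is exactly where the subcriticality $\rho_\infty<\rho_{cr}$ enters in an essential way (in the critical case $\rho_\infty=\rho_{cr}$ the bound degenerates to $m_0=0$, which is consistent with the paper's remark that no decay rate is available there). Once $m_0>0$ is secured, the comparison with $F$ through \eqref{hp.kappa} and the final ODE integration are routine. A secondary technical point is to justify that $E$ is absolutely continuous, so that the inequality $-E'\ge cE^{1+\alpha}$ may be integrated; this follows from the energy identity \eqref{sunny.1} already used in the preceding lemmas.
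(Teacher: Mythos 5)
Your proof is correct, and it reaches the paper's differential inequality $-E'(t)\ge c\,E(t)^{2\kappa/(\kappa+1)}$ by a genuinely different route in the one nontrivial step, the Poincar\'e inequality for $w=f(\rho(t))$. The paper simply invokes its Appendix Lemma \ref{lem.new.f} (a generalized Poincar\'e inequality tied to the mass constraint $\fint_\Omega\rho\,dx=\rho_\infty$, whose subcritical case is proved by normalizing $f(\rho_n)/\|f(\rho_n)\|_{L^2}$, extracting a nonzero constant limit, and contradicting mass conservation via Fatou), then combines \eqref{sunny.1}, the comparison $F(s)\le C f(s)^{1+1/\kappa}$ from \eqref{hp.kappa}, Jensen's inequality, and Gronwall. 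You instead observe that mass conservation and subcriticality force $\mathrm{meas}(\{\rho(t)\le\rho_{cr}\})\ge m_0=|\Omega|(\rho_{cr}-\rho_\infty)/\rho_{cr}>0$ uniformly in $t$, so that $w$ vanishes on a set of measure at least $m_0$, and then apply the standard Poincar\'e inequality for $H^1$ functions vanishing on a set of prescribed positive measure (itself proved by the usual Rellich compactness argument, requiring the same implicit boundary regularity the paper needs). This buys transparency: it isolates exactly where $\rho_\infty<\rho_{cr}$ enters and explains why the estimate degenerates as $\rho_\infty\uparrow\rho_{cr}$, consistent with the paper's inability to prove a rate in the critical case; the paper's Lemma \ref{lem.new.f}, by contrast, is a single statement covering both $\rho_\infty>\rho_{cr}$ and $\rho_\infty<\rho_{cr}$ and is reused for the exponential decay in Lemma \ref{lem.exp}, so it amortizes the compactness argument across both regimes. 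The remaining ingredients are the same up to cosmetic differences (your H\"older step $\|w\|_{L^q}\le C\|w\|_{L^2}$ plays the role of the paper's Jensen inequality; your explicit integration of $\frac{d}{dt}E^{-\alpha}\ge\alpha c$ is what the paper calls Gronwall), and your attention to the degenerate case $E(0)=0$ and to the absolute continuity of $E$ is a point of rigor the paper glosses over.
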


\begin{proof}
	Putting \eqref{sunny.1} and Lemma \ref{lem.new.f} in the Appendix together yields
	\begin{align*}
	\frac{d}{dt}\int_\Omega F(\rho(t))dx \leq -c\int_\Omega |f(\rho(t))|^2 dx.
	\end{align*}
	From assumption \eqref{hp.kappa} it follows
	\begin{align*}
	F(s)\leq C_R\int_{\rho_{cr}}^s (r-\rho_{cr})^\kappa dr = \frac{C_R}{1+\kappa}(s-\rho_{cr})^{\kappa + 1}
	\leq \frac{C_R c_R^{-(\kappa + 1)/\kappa} }{1+\kappa}f(s)^{1 + 1/\kappa}
	\quad s\leq R,
	\end{align*}
	and given the $L^\infty(\Omega\times(0,\infty))$ bounds for $\rho$ we obtain
	\begin{align*}
	\frac{d}{dt}\int_\Omega F(\rho(t))dx \leq -c\int_\Omega F(\rho(t))^{ \frac{2\kappa}{\kappa + 1}} dx \leq 
	-c\left(\int_\Omega F(\rho(t)) dx\right)^{ \frac{2\kappa}{\kappa + 1}},
	\end{align*}
	where the last inequality follows from Jensen's inequality. Gronwall's Lemma yields the statement. This finishes the proof of the Lemma.
\end{proof}

\begin{lemma}\label{lem.new.2}
	Assume that $\Omega$ is connected, $\rho_\infty < \rho_{cr}$ and \eqref{hp.kappa} holds.
	Then,
	\begin{align*}
	\|(\rho(t)-\rho_{cr})_+\|_{L^1(\Omega)} \leq C\, t^{-\frac{1}{\kappa-1}},\qquad 
	\|\rho(t)-\hat{\rho}\|_{\dot{H}^1(\Omega)'}\leq C\, t^{-\frac{1}{\kappa - 1}},
	\end{align*}
	for $t\to\infty$.
\end{lemma}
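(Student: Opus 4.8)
The plan is to establish the two decay rates by combining the free energy decay from Lemma \ref{lem.new.3} with two further estimates: an elementary functional inequality relating the $L^1$ norm of the excess density to the free energy, and a duality/interpolation argument converting $L^1$-type control into a $2$-Wasserstein bound.

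First I would tackle the $L^1$ estimate for $(\rho(t)-\rho_{cr})_+$. From Assumption \eqref{hp.kappa} we have the lower bound $F(s)\geq \frac{c_R}{1+\kappa}(s-\rho_{cr})^{\kappa+1}$ for $\rho_{cr}\leq s\leq R$, so that pointwise $(\rho-\rho_{cr})_+^{\kappa+1}\leq C\,F(\rho)$. Integrating over $\Omega$ and applying Jensen's inequality (since $\Omega$ is bounded and $\kappa+1>1$) gives
\begin{align*}
\|(\rho(t)-\rho_{cr})_+\|_{L^1(\Omega)}^{\kappa+1}\leq C\int_\Omega (\rho(t)-\rho_{cr})_+^{\kappa+1}dx\leq C'\int_\Omega F(\rho(t))dx.
\end{align*}
Feeding in the bound $\int_\Omega F(\rho(t))dx\leq C t^{-\frac{\kappa+1}{\kappa-1}}$ from Lemma \ref{lem.new.3} and taking the $(\kappa+1)$-th root yields exactly $\|(\rho(t)-\rho_{cr})_+\|_{L^1(\Omega)}\leq C\,t^{-\frac{1}{\kappa-1}}$.

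For the Wasserstein estimate, the key observation is that $\rho(t)$ and $\hat\rho$ have the same total mass (mass conservation plus $\int_\Omega\hat\rho\,dx=\lim\int_\Omega\rho(t)\,dx=|\Omega|\rho_\infty$), so $\mathcal{W}_2$ is well defined between them. I would use the standard duality characterization together with the control of the difference $\rho(t)-\hat\rho$ in a negative Sobolev / flux sense. Concretely, from the monotonicity property in Theorem \ref{thm.existence} one has $\min(\rho(t),\rho_{cr})$ nondecreasing to $\hat\rho$, so $\rho(t)-\hat\rho = (\rho(t)-\rho_{cr})_+ - (\hat\rho-\min(\rho(t),\rho_{cr}))$, and both terms are controlled in $L^1$ by $\|(\rho(t)-\rho_{cr})_+\|_{L^1}$ via the free energy dissipation. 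A clean route is the Benamou--Brenier / Kantorovich--Rubinstein philosophy: since the $\mathcal{W}_2$ distance between measures of equal mass is dominated (on a bounded domain, using the $L^\infty$ bounds for the densities) by a negative-norm of their difference, specifically $\mathcal{W}_2(\rho(t),\hat\rho)^2\leq C\|\rho(t)-\hat\rho\|_{H^{-1}(\Omega)}^2$, and the latter can be estimated through the equation: writing $\rho(t)-\hat\rho$ against the solution $w$ of $-\Delta w=\rho(t)-\hat\rho$ and using $\partial_t\rho=\Delta f(\rho)$, one bounds the $H^{-1}$-norm in terms of $\int_t^\infty\|\nabla f(\rho)\|_{L^2}\,ds$, which by the free energy identity \eqref{sunny.1} and Lemma \ref{lem.new.3} decays like $t^{-\frac{1}{\kappa-1}}$.

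The main obstacle will be the Wasserstein bound, and specifically making rigorous the passage from the free energy decay to the $t^{-1/(\kappa-1)}$ rate for $\mathcal{W}_2$. The delicate point is that $\hat\rho$ is not explicit and the convergence is only known in $L^p$; one must carefully justify the duality inequality $\mathcal{W}_2\leq C\,\mathcal{W}_2^{\text{linear}}$ or the comparison with the $H^{-1}$ norm on a bounded domain with Neumann data, and control the time integral $\int_t^\infty(\cdot)\,ds$ so that its decay matches the square root of the free energy rate $\left(t^{-\frac{\kappa+1}{\kappa-1}}\right)^{1/2}=t^{-\frac{\kappa+1}{2(\kappa-1)}}$ —here one must check that the flux-based $H^{-1}$ estimate, rather than the direct $L^1$ estimate, is what produces the advertised exponent $t^{-\frac{1}{\kappa-1}}$, i.e.\ that the Wasserstein distance scales like the square root of the dissipated energy along the flow. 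I expect the cleanest implementation to integrate the dissipation identity from $t$ to $\infty$, exploit that $\int_t^\infty\int_\Omega|\nabla f(\rho)|^2\,dx\,ds=\int_\Omega F(\rho(t))dx\to 0$ with the Lemma \ref{lem.new.3} rate, and then invoke the displacement-interpolation estimate for gradient flows relating $\mathcal{W}_2$ to the integrated metric derivative $\int_t^\infty\|\partial_s\rho\|_{H^{-1}}ds$.
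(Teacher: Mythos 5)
Your $L^1$ estimate is exactly the paper's argument: Assumption \eqref{hp.kappa} controls $(\rho-\rho_{cr})_+^{\kappa+1}$ pointwise by $F(\rho)$, and Jensen plus Lemma \ref{lem.new.3} give the rate; no issues there. The Wasserstein half, however, is incomplete at its central step. You reduce everything to the bound $\mathcal{W}_2(\rho(t),\hat\rho)\leq C\int_t^\infty\|\partial_s\rho\|_{\dot{H}^{1}(\Omega)'}\,ds\leq C\int_t^\infty\|\nabla f(\rho(s))\|_{L^2(\Omega)}\,ds$ and then assert that this integral decays like $t^{-1/(\kappa-1)}$ ``by the free energy identity \eqref{sunny.1} and Lemma \ref{lem.new.3}'' --- but you never derive this, and you yourself flag it as the point that must be checked. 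The difficulty is genuine: the energy identity only gives $\int_t^\infty\|\nabla f(\rho(s))\|_{L^2(\Omega)}^2\,ds=\int_\Omega F(\rho(t))\,dx\leq Ct^{-\frac{\kappa+1}{\kappa-1}}$, i.e.\ control of the \emph{square} of the flux norm, and Cauchy--Schwarz applied directly on $[t,\infty)$ is useless because that interval has infinite measure. As written, the second estimate is therefore not proved.

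The gap can be filled, and it is worth recording how, because your route then becomes genuinely different from the paper's. Decompose $[t,\infty)=\bigcup_{j\geq 0}[2^jt,2^{j+1}t]$ and apply Cauchy--Schwarz blockwise:
\begin{align*}
\int_{2^jt}^{2^{j+1}t}\|\nabla f(\rho(s))\|_{L^2(\Omega)}\,ds
\leq (2^jt)^{1/2}\left(\int_{2^jt}^{\infty}\|\nabla f(\rho(s))\|_{L^2(\Omega)}^2\,ds\right)^{1/2}
\leq C\,(2^jt)^{\frac{1}{2}-\frac{\kappa+1}{2(\kappa-1)}}
= C\,(2^jt)^{-\frac{1}{\kappa-1}};
\end{align*}
summing the geometric series (convergent since $\kappa>1$) gives $\int_t^\infty\|\nabla f(\rho(s))\|_{L^2(\Omega)}\,ds\leq Ct^{-\frac{1}{\kappa-1}}$, which closes your argument (the comparison $\mathcal{W}_2\leq C\|\cdot\|_{\dot{H}^{1}(\Omega)'}$ you invoke without proof is also invoked without proof by the paper, so that is not a point of difference). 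The paper instead avoids any such time-integration subtlety: it introduces Neumann potentials $-\Delta V(t)=\rho(t)-\langle\rho(t)\rangle$, $-\Delta\hat V=\hat\rho-\langle\hat\rho\rangle$, computes via two integrations by parts $-\frac{d}{dt}\frac{1}{2}\|\nabla(V(t)-\hat V)\|_{L^2(\Omega)}^2=\int_\Omega(\rho(t)-\hat\rho)f(\rho(t))\,dx$, bounds this using the $L^\infty$ bounds, \eqref{hp.kappa} and Jensen by $C\left(\int_\Omega F(\rho(t))\,dx\right)^{\kappa/(\kappa+1)}\leq Ct^{-\frac{\kappa}{\kappa-1}}$, and integrates the tail. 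Note that the paper's computation bounds the \emph{squared} negative norm by $t^{-1/(\kappa-1)}$, so read literally it yields only $\mathcal{W}_2\leq Ct^{-\frac{1}{2(\kappa-1)}}$; your flux-based route, once the dyadic step above is supplied, actually delivers the advertised exponent for $\mathcal{W}_2$ itself.
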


\begin{proof}[Proof.]
	Lemma \ref{lem.new.3} and Assumption \ref{hp.kappa} lead to 
	\begin{align*}
	\int_\Omega (\rho-\rho_{cr})_+^{\kappa + 1} dx \leq C
	\int_\Omega F(\rho)dx \leq C t^{- \frac{\kappa + 1}{\kappa - 1}}\quad\mbox{as }t\to\infty .
	\end{align*}
	Jensen's inequality yields
	\begin{align*}
	\int_\Omega (\rho-\rho_{cr})_+ dx \leq C t^{- \frac{1}{\kappa - 1}}\quad\mbox{as }t\to\infty .
	\end{align*}
	We now prove the statement related to the negative homogeneous Sobolev seminorm. 
	Define for $t\geq 0$
	\begin{align*}
	\begin{cases}
	-\Delta V(t) = \rho(t) - \langle\rho(t)\rangle & \mbox{in }\Omega\\
	\nu\cdot\nabla V(t) = 0 & \mbox{on }\partial\Omega\\
	\int_\Omega V(t) dx = 0 & 
	\end{cases},\qquad
	\begin{cases}
	-\Delta \hat{V} = \hat{\rho} - \langle\hat{\rho}\rangle & \mbox{in }\Omega\\
	\nu\cdot\nabla \hat{V} = 0 & \mbox{on }\partial\Omega\\
	\int_\Omega \hat{V} dx = 0 & 
	\end{cases}.
	\end{align*}
	Since $\langle\rho(t)\rangle = \langle\hat{\rho}\rangle$, we deduce
	\begin{align*}
	\|\rho(t) - \hat{\rho}\|_{\dot{H}^{1}(\Omega)'} = 
	\| \nabla( V(t) - \hat{V} ) \|_{L^2(\Omega)}.
	\end{align*}
	Let us compute
	\begin{align*}
	\frac{d}{dt} \frac{1}{2}\| \nabla( V(t) - \hat{V} ) \|_{L^2(\Omega)}^2 &= 
	\int_\Omega \nabla( V(t) - \hat{V} )\cdot \nabla\partial_t V(t)\, dx\\
	&= \int_\Omega ( V(t) - \hat{V} )\pa_t\rho(t)\, dx\\
	&= \int_\Omega ( V(t) - \hat{V} )\Delta f(\rho(t))\, dx .
	\end{align*}
	Integrating by parts twice leads to
	\begin{align*}
	-\frac{d}{dt} \frac{1}{2}\| \nabla( V(t) - \hat{V} ) \|_{L^2(\Omega)}^2 &= 
	\int_\Omega (\rho(t)-\hat{\rho})f(\rho(t))\, dx.
	\end{align*}
	The uniform $L^\infty$ bounds for $\rho$ and Assumption \eqref{hp.kappa} imply
	\begin{align*}
	-\frac{d}{dt} \frac{1}{2}\| \nabla( V(t) - \hat{V} ) \|_{L^2(\Omega)}^2 &\leq 
	C\int_\Omega f(\rho(t))\, dx\\
	&\leq C\int_\Omega (\rho(t) - \rho_{cr})_+^{\kappa} dx\\
	&\leq C\int_\Omega F(\rho(t))^{\frac{\kappa}{\kappa + 1}}dx .
	\end{align*}
	Jensen's inequality and Lemma \ref{lem.new.3} yield
	\begin{align*}
	-\frac{d}{dt} \frac{1}{2}\| \nabla( V(t) - \hat{V} ) \|_{L^2(\Omega)}^2 &\leq 
	C\left(\int_\Omega F(\rho(t)) dx\right)^{\frac{\kappa}{\kappa + 1}}\\
	&\leq C t^{- \frac{\kappa}{\kappa - 1} }\quad\mbox{as }t\to\infty.
	\end{align*}
	Integrating the above inequality between $t$ and $\infty$ and noticing that 
	$\lim_{t\to\infty}\| \nabla( V(t) - \hat{V} ) \|_{L^2(\Omega)} = 0$ since $\rho(t)\to \hat{\rho}$ (thanks to Lemma \ref{lem.new}), we obtain
	\begin{align*}
	\| \nabla( V(t) - \hat{V} ) \|_{L^2(\Omega)}^2 \leq C t^{- \frac{1}{\kappa - 1} }\quad\mbox{as }t\to\infty.
	\end{align*}
	This finishes the proof of the lemma.
\end{proof}

This finishes the proof of Theorem \ref{thm.ltb.diff}.

\begin{rem}
We point out that for $\rho_\infty > \rho_{cr}$ it holds $F(s\vert\rho_{\infty})\geq c |s-\rho_{\infty}|^2$ for $s\geq 0$, so \eqref{lt.exp} implies that $\rho(t)\to\rho_{\infty}$ in $L^2(\Omega)$ with an exponential rate.
\end{rem}

\section{Proof of Theorem \ref{thm.fb}}\label{sec.fb}

\subsection{Existence for the free boundary problem \eqref{fb}--\eqref{fb.ic}}
We show in this section that the free boundary problem \eqref{fb}--\eqref{fb.ic} has a weak solution, that is, we prove the first part of Theorem~\ref{thm.fb}.
We start with a reformulation via a change of spatial variables: $x = R(t)y$. Let $v(y,t)=u(x,t)=u(R(t)y,t)$ for $y\in B\equiv B(0,1)$, $t>0$. Then,
\begin{align*}
\pa_t v(y,t) = R'(t)y\cdot\nabla_x u(R(t)y,t) + \pa_t u(R(t)y,t),
\quad \nabla_x u(R(t)y,t) = R(t)^{-1}\nabla_y v(y,t),
\end{align*}
and we deduce
\begin{align*}
R(t)^2\pa_t v(y,t) = R(t)R'(t)y\cdot\nabla_y v(y,t) + \Delta_y f(v(y,t)).
\end{align*}
On the other hand, for $x\in\pa E(t)$ (i.e.~$y\in\pa B$)
\begin{align*}
R'(t)= \frac{|\nabla_x f(u(x,t))|}{\rho_{cr}-\rho_0(x)} = 
\frac{|\nabla_y f(v(y,t))|}{R(t)(\rho_{cr}-\rho_0(R(t)y))}.
\end{align*}
Being the right-hand side of the above identity a radial function in $B$, we can replace it with its spatial average
on $\pa B$:
\begin{align*}
R'(t) = \fint_{\pa B}\frac{|\nabla_y f(v(y,t))|}{R(t)(\rho_{cr}-\rho_0(R(t)y))} d\sigma_y.
\end{align*}
For every Lebesgue-measurable function $\omega : B\times [0,\infty)\to\R$ such that $\na_y\omega(t)\in L^1(\pa B)$ for $t\geq 0$ we define 
\begin{align}\label{g.map}
g[\omega](r,t) = \fint_{\pa B}\frac{|\nabla_y \omega(y,t)|}{\rho_{cr}-\rho_0(r y)} d\sigma_y,\quad r>R_0,\quad t\geq 0.
\end{align}
Thus, we obtain
\begin{align}\label{free.v}
R(t)^2\pa_t v(y,t) &= g[f(v)](R(t),t)y\cdot\nabla_y v(y,t) + \Delta_y f(v(y,t))\quad (y,t)\in B\times (0,T),\\
\label{free.v.bc}
v(y,t) &= \rho_{cr}\quad (y,t)\in\pa B\times (0,T),\\
\label{free.v.ic}
v(y,0) &= \rho_0(R_0 y)\quad y\in B,\\
\label{free.v.R}
R'(t) &= \frac{g[f(v)](R(t),t)}{R(t)}\quad t>0,\qquad R(0)=R_0.
\end{align}
In order to have better estimates (see later part), we define the new variable $w = f(v)$ and $a = f'\circ f^{-1}$, hence obtaining the following {\em semilinear} equation
\begin{align}\label{free.1}
\pa_t w &= R(t)^{-2}g[w](R(t),t)y\cdot\nabla_y w + R(t)^{-2} a(w)\Delta_y w\quad (y,t)\in B\times (0,T),\\
\label{free.1.bc}
w(y,t) &= 0\quad (y,t)\in\pa B\times (0,T),\\
\label{free.1.ic}
w(y,0) &= f(\rho_0(R_0 y))\quad y\in B,\\
\label{free.1.R}
R'(t) &= \frac{g[w](R(t),t)}{R(t)}\quad t>0,\quad R(0) = R_0.
\end{align}
We aim at proving the well-posedness of \eqref{free.1}--\eqref{free.1.R} in a suitable Sobolev space.
The proof is divided into several steps:\medskip\\
{\em Step 1: Regularization.}
Let $\eps>0$, $\delta>0$ arbitrary.
We start by defining the truncation operator $[\cdot]_\eps$ as well as the time-regularizing kernel $\kappa_\delta$
\begin{align}
\label{trunc.def}
&[x]_\eps = \min(1/\eps,\max(\eps,x)),\qquad x\in\R,\\
\label{kappa.def}
&{\kappa}_\delta(t)=\delta^{-1}{\kappa}_1(t\delta^{-1}),\quad 
{\kappa}_1\in C^1_c((0,\infty)),\quad 
{\kappa}_1\geq 0,\quad \int_\R {\kappa}_1 dt = 1,
\end{align}
as well as the regularized mappings $g_\eps$, $g_{\eps,\delta}$
\begin{align}
\label{g.map.eps}
g_{\eps}[\omega](r,t) &= \fint_{\pa B}\frac{
	|\nabla_y \omega(y,t)|}{\rho_{cr} +\eps -\rho_0(r y)} d\sigma_y,\quad r>R_0,\quad t\geq 0,\\
\label{g.map.eps.delta}
g_{\eps,\delta}[\omega](r,t) &= 
\int_0^t \kappa_\delta(t-\tau) g_\eps [\omega](r,\tau) d\tau,\quad r>R_0,\quad t\geq 0.
\end{align}
Then, we consider the regularized problem
\begin{align}\label{free.1.eps}
&\pa_t w_{\eps,\delta} = \frac{g_{\eps,\delta}[w_{\eps,\delta}](R_{\eps,\delta}(t),t)}{R_{\eps,\delta}(t)^2}y\cdot\nabla_y w_{\eps,\delta} + [R_{\eps,\delta}(t)^{-2}]_\eps [a(w_{\eps,\delta})]_\eps\Delta_y w_{\eps,\delta}\quad (y,t)\in B\times (0,T),\\
\label{free.1.bc.eps}
& w_{\eps,\delta}(y,t) = 0\quad (y,t)\in\pa B\times (0,T),\\
\label{free.1.ic.eps}
& w_{\eps,\delta}(y,0) = f(\rho_0(R_0 y))\quad y\in B,\\
\label{free.1.R.eps}
& R_{\eps,\delta}'(t) = \frac{g_{\eps,\delta} [\omega_{\eps,\delta}] (R_{\eps,\delta}(t),t)}{R_{\eps,\delta}(t)}\quad t>0,\quad R(0)=R_0.
\end{align}
We reformulate \eqref{free.1.eps}--\eqref{free.1.R.eps} as a fix point problem for the following operator:
\begin{align*}
S : (\tilde{w},s)\in X\times [0,1] \mapsto w\in X,\quad 
X\equiv L^2(0,T; H^{3/2}(B)),
\end{align*}
such that
\begin{align}
\label{free.fp}
&\pa_t w = 
s \frac{g_{\eps,\delta}[\tilde{w}](R(t),t)}{R(t)^{2}}y\cdot\nabla_y w 
+ (s [R(t)^{-2}]_\eps [a(\tilde{w})]_\eps + 1-s)\Delta_y w\quad (y,t)\in B\times (0,T),\\
\label{free.fp.bc}
&w(y,t) = 0\quad (y,t)\in\pa B\times (0,T),\\
\label{free.fp.ic}
&w(y,0) = f(\rho_0(R_0 y))\quad y\in B,\\
\label{free.fp.R}
&R'(t) = \frac{g_{\eps,\delta}[\tilde{w}](R(t),t)}{R(t)}\quad t>0,\quad R(0)=R_0.
\end{align}
The problem above is uniquely solvable. Precisely, since $\tilde{w}\in X$, it follows that $\nabla\tilde{w}\in L^2(0,T; L^2(\pa B))$, so \eqref{free.fp.R} (via standard ODE theory) admits a unique solution $R\in C^1([0,T])$ (notice that $g\in C^0$) which is non-decreasing and such that $R(t)\geq R_0>0$ for $t\geq 0$. Furthermore, 
given that $\rho_0(ry)\leq\rho_{cr}$ for $y\in\pa B$, $r>R_0$, it follows
\begin{align}\label{est.fp.g}
\|g_{\eps,\delta}[\tilde w](R(\cdot),\cdot)\|_{L^\infty(0,T)}\leq C(\eps)\|\kappa_\delta\|_{L^2(0,T)}
\|\na\tilde w\|_{L^2(0,T; L^2(\pa B))}\leq C(\eps,\delta)\|\tilde w\|_X.
\end{align}
So standard parabolic regularity theory \cite{Lie1996} yields the existence of a unique solution $w$ to \eqref{free.fp}--\eqref{free.fp.ic} such that 
$w\in L^2(0,T; H^{2}(B))$, $\pa_t w\in L^2(0,T; L^{2}(B))$.

We wish to derive a bound for $w$.
Multiplying \eqref{free.fp} by $-\Delta w$ and integrating in $B\times (0,t)$ yield
\begin{align*}
-\int_0^t\int_B \Delta w\, \pa_t w dy d\tau 
=& -s\int_0^t R(\tau)^{-2}g[\tilde w](R(\tau),\tau)\int_B \Delta w\, y\cdot\nabla w dy d\tau \\
&-\int_0^t\int_B (s [R(\tau)^{-2}]_\eps [a(\tilde{w})]_\eps + 1-s) |\Delta w|^2 dy d\tau.
\end{align*}
We cannot simply integrate by parts the integral on the left-hand side since we only know that $\pa_t w\in L^2(0,T; L^2(B))$. Therefore we consider, for $\tau>0$ small enough
\begin{align*}
-\tau^{-1}&\int_\tau^{t}\int_B \Delta w(t')\, (w(t')-w(t'-\tau)) dy dt' \\
&=\tau^{-1}\int_\tau^{t}\int_B \na w(t')\cdot
 (\na w(t')-\na w(t'-\tau)) dy dt' \\
&\geq \frac{1}{2\tau}\int_\tau^t\int_B |\na w(t')|^2 dy dt'
-\frac{1}{2\tau}\int_\tau^t\int_B |\na w(t'-\tau)|^2 dy dt'\\
&= \frac{1}{2\tau}\int_\tau^t\int_B |\na w(t')|^2 dy dt'
-\frac{1}{2\tau}\int_0^{t-\tau}\int_B |\na w(t')|^2 dy dt'\\
&= \frac{1}{2\tau}\int_{t-\tau}^t\int_B |\na w(t')|^2 dy dt'
-\frac{1}{2\tau}\int_0^\tau\int_B |\na w(t')|^2 dy dt'.
\end{align*}
Since $w\in L^2(0,T; H^{2}(B))$, $\pa_t w\in L^2(0,T; L^{2}(B))$, it also holds that $w\in C^0([0,T],H^1(B))$, thus we can apply the fundamental theorem of calculus to infer that
\begin{align*}
-\int_0^t\int_B \Delta w\, \pa_t w dy d\tau \geq 
\frac{1}{2}\int_B |\na w(t)|^2 dy - \frac{1}{2}\int_B |\na w(0)|^2 dy.
\end{align*}
Therefore, we obtain
\begin{align*}
\frac{1}{2}\int_B |\nabla w(t)|^2 dy 
&+\int_0^t\int_B (s [R(t)^{-2}]_\eps [a(\tilde{w})]_\eps + 1-s) |\Delta w|^2 dy d\tau
\leq \frac{1}{2}\int_B |\nabla w(0)|^2 dy \\
&-s\int_0^t R(\tau)^{-2}g[\tilde w](R(\tau),\tau)\int_B \Delta w\, y\cdot\nabla w dy d\tau.
\end{align*}
Furthermore, integration by parts lead to
\begin{align*}
\int_B \Delta w\, y\cdot\nabla w\, dy
=& \sum_{i,j=1}^{2}\int_B \pa_{y_j y_j}^2 w\, y_i\pa_{y_i}w\, dy\\
=& -\sum_{i,j=1}^{2}\int_B y_i\pa_{y_i y_j y_j}^3 w\, w\, dy
-2\sum_{j=1}^{2}\int_B w \pa_{y_j y_j}^2 w\, dy\\
=& \sum_{i,j=1}^{2}\int_B \pa_{y_i y_j}^2 w\, (\delta_{ij}w + y_i\pa_{y_j}w) dy - 2\int_B w\Delta w\, dy\\
=& -\int_B w\Delta w\, dy 
+ \int_B (y\cdot\nabla)\nabla w\cdot\nabla w\, dy\\
=& \int_B |\nabla w|^2 dy + \frac{1}{2}\int_B (y\cdot\nabla)(|\nabla w|^2) dy\\
=& \frac{1}{2}\int_{\pa B}|\na w|^2 d\sigma_y .
\end{align*}
Therefore, we get
\begin{align}\label{fp.est.1a}
\int_B |\nabla w(t)|^2 dy 
&+\int_0^t\int_B (s [R(t)^{-2}]_\eps [a(\tilde{w})]_\eps + 1-s)|\Delta w|^2 dy d\tau\\ \nonumber
&+s\int_0^t\int_{\pa B}R(\tau)^{-2}g[\tilde w](R(\tau),\tau)|\na w|^2 d\sigma_y d\tau
\leq \int_B |\nabla w(0)|^2 dy,\quad t\in [0,T]. 
\end{align}
Estimate \eqref{fp.est.1a} yields a uniform (in $s$) bound for $w$ in
$L^\infty(0,T; H^1(B)) \cap L^2(0,T; H^2(B))$.
Via this bound and \eqref{free.fp}, \eqref{est.fp.g} we get a $s-$uniform bound for $\pa_t w$ in $L^2(0,T; L^2(B))$. 
Being the embedding $H^{2}(B)\hookrightarrow H^{3/2}(B)$ compact and the embedding $H^{3/2}(B)\hookrightarrow L^{2}(B)$ continuous, Aubin-Lions Lemma yields that the embedding 
\begin{align*}
\{ w\in L^2(0,T; H^2(B))~:~ \pa_t w \in L^2(0,T; L^2(B)) \}
\hookrightarrow 
L^2(0,T; H^{3/2}(B)) = X
\end{align*}
is compact. It follows that $S: X\times [0,1]\to X$ is compact. The continuity of $S$ follows from standard arguments in parabolic regularity theory. Also, $S(\cdot,0):X\to X$ is trivially constant. Finally, we want to prove that the set
\begin{align*}
\{w\in X~:~\exists s\in [0,1]:~~ S(w,s)=w\}
\end{align*}
is bounded in $X$. Assume that $w\in X$, $s\in [0,1]$ satisfy $w=S(w,s)$. 
An $s-$uniform bound for $w$ in $X$ follows from the $L^2(0,T; H^2(B))$ bound for $w$ that is provided by \eqref{fp.est.1a}. Therefore, by Leray-Schauder's theorem we infer the existence of a fixed point $w_{\eps,\delta}\in X$ for $S(\cdot,1)$, that is, a solution to the regularized problem \eqref{free.1.eps}--\eqref{free.1.R.eps}. Furthermore $w_{\eps,\delta}$ is radially symmetric, positive inside $B\times (0,T)$ and uniformly (in $\eps$) bounded in $L^\infty(B\times (0,T))$ thanks to the strong maximum principle, and it is a solution to
\eqref{free.1.eps}--\eqref{free.1.R.eps}.
Moreover \eqref{fp.est.1a} holds with $s=1$ and $\tilde w = w = w_{\eps,\delta}$, $R=R_{\eps,\delta}$:
\begin{align}\label{fp.est.1}
\int_B |\nabla w_{\eps,\delta}(t)|^2 dy 
&+\int_0^t\int_B [R_{\eps,\delta}(t)^{-2}]_\eps [a(w_{\eps,\delta})]_\eps |\Delta w_{\eps,\delta}|^2 dy d\tau\\ \nonumber
&+\int_0^t\int_{\pa B}R_{\eps,\delta}(\tau)^{-2}g[w_{\eps,\delta}](R_{\eps,\delta}(\tau),\tau)|\na w_{\eps,\delta}|^2 d\sigma_y d\tau\\
\nonumber
& \leq \int_B |\nabla w_{\eps,\delta}(0)|^2 dy,\quad t\in [0,T]. 
\end{align}
{\em Step 2: Limit $\delta\to 0$.}
Thanks to \eqref{kappa.def}, \eqref{g.map.eps.delta} and \eqref{fp.est.1} it holds
\begin{align*}
\|g_{\eps,\delta}[w_{\eps,\delta}](R_{\eps,\delta}(\cdot),\cdot)\|_{L^2(0,T)} &\leq \eps^{-1}\|\na w_{\eps,\delta}\|_{L^2(0,T; L^1(\pa B))}\leq C\eps^{-1}\|\na w_{\eps,\delta}\|_{L^2(0,T; H^{1/2}(B))}\\
&\leq 
C\eps^{-1}\|w_{\eps,\delta}\|_{L^2(0,T; H^{2}(B))}\leq C(\eps).
\end{align*}
As a consequence of the above estimate and \eqref{free.1.R.eps}, we deduce that 
\begin{align*}
0<R_0\leq R_{\eps,\delta}(t)\leq C(\eps)(1+T^{1/4})\quad 0<t<T.
\end{align*}
Also, \eqref{fp.est.1} yields a $\delta-$uniform bound for $w_{\eps,\delta}$ in $L^2(0,T; H^2(B))\cap L^\infty(0,T; H^1(B))$. Together with the $\delta-$uniform $L^2(0,T)$ bound for $g_{\eps,\delta}(R(\cdot),\cdot)$, this allows us to obtain a $\delta-$uniform bound for $\pa_t w_{\eps,\delta}$ in $L^p(0,T; L^p(B))$ for some $p>1$ as well as a $\delta-$uniform bound for $R_{\eps,\delta}$ in ${H^1}(0,T)$. Via Aubin-Lions Lemma and the compact Sobolev's embedding ${H^1}(0,T)\hookrightarrow C^{0,\alpha}([0,T])$ ($0<\alpha<{1/2}$) we deduce that, up to subsequences,
\begin{align*}
w_{\eps,\delta}\to w_\eps\quad\mbox{strongly in $C^0([0,T], L^q(B))$, for every }2\leq q < 
\begin{cases}
\infty & d=2\\ \frac{2d}{d-2} & d\geq 3
\end{cases}\\
w_{\eps,\delta}\rightharpoonup w_\eps\quad\mbox{weakly in }L^2(0,T; H^2(B)),\\
w_{\eps,\delta}\rightharpoonup^* w_\eps\quad\mbox{weakly* in }L^\infty(0,T; H^1(B)),\\
R_{\eps,\delta}\to R_\eps\quad\mbox{strongly in $C^{0,\alpha}([0,T])$, for every $0<\alpha<{1/2}$,}\\
R_{\eps,\delta}\rightharpoonup R_\eps\quad\mbox{weakly in }{H^1}(0,T).
\end{align*} 
These relations allow us to take the limit $\delta\to 0$ in \eqref{free.1.eps}--\eqref{free.1.R.eps} in a rather straightforward way and obtain a solution $w_\eps\in L^2(0,T; H^2(B))\cap L^\infty(0,T; H^1(B))$, $\pa_t w_\eps\in L^2(0,T; L^2(B))$, $R_\eps\in {H^1}(0,T)$ to
\begin{align}\label{free.3.eps}
&\pa_t w_{\eps} = R_{\eps}(t)^{-2}g_{\eps}[w_\eps](R_{\eps}(t),t)y\cdot\nabla_y w_{\eps} + [R_{\eps}(t)^{-2}]_\eps [a(w_{\eps})]_\eps  \Delta_y w_{\eps}\quad (y,t)\in B\times (0,T),\\
\label{free.3.bc.eps}
& w_{\eps}(y,t) = 0\quad (y,t)\in\pa B\times (0,T),\\
\label{free.3.ic.eps}
& w_{\eps}(y,0) = f(\rho_0(R_0 y))\quad y\in B,\\
\label{free.3.R.eps}
& R_{\eps}'(t) = \frac{g_{\eps}[w_\eps](R_{\eps}(t),t)}{R_{\eps}(t)}\quad t>0,\quad R(0)=R_0,
\end{align}
where $g_\eps$ is defined in \eqref{g.map.eps}.\medskip\\
{\em Step 3: Limit $\eps\to 0$.}
We will now derive estimates for $(w_{\eps},R_\eps)$, the solution to \eqref{free.3.eps}--\eqref{free.3.R.eps}, which are uniform in $\eps$.
We already have an $\eps-$uniform bound for $w_\eps$ in $L^\infty(0,T; H^1(B))$. Our next goal is to derive an upper bound for $R_\eps$. In order to to achieve this goal, we preliminary find an estimate for the $L^1(\pa B\times (0,T))$ norm of $\na w_\eps$ in terms of $R_\eps$.

Let us consider the following estimate, contained in \eqref{fp.est.1}:
\begin{align*}
\int_0^t R_\eps(\tau)^{-2}g_\eps[w_\eps](R_\eps(\tau),\tau)\int_{\pa B}|\na w_\eps(y,\tau)|^2 d\sigma_y d\tau\leq C.
\end{align*}
Jensen's inequality yields
\begin{align*}
\int_0^t R_\eps(\tau)^{-2}g_\eps[w_\eps](R_\eps(\tau),\tau)
\left( \int_{\pa B}|\na w_\eps(y,\tau)| d\sigma_y\right)^2 d\tau
&\leq C ,
\end{align*}
and being $t\mapsto R_\eps(t)$ non-decreasing we get
\begin{align*}
\int_0^t g_\eps[w_\eps](R_\eps(\tau),\tau)
\left( \int_{\pa B}|\na w_\eps(y,\tau)| d\sigma_y\right)^2 d\tau
&\leq C R_{\eps}(t)^2 .
\end{align*}
However, from \eqref{g.map.eps} it follows
\begin{align*}
g_{\eps}[w_\eps](R_\eps(\tau),\tau)\geq |\pa B|^{-1}(1+\rho_{cr})^{-1}\int_{\pa B}|\na w_\eps(y,\tau)|d\sigma_y.
\end{align*}
Therefore, we get
\begin{align}\label{est.naw.paB.0}
\int_0^t \left( \int_{\pa B}|\na w_\eps(y,\tau)| d\sigma_y\right)^3 d\tau
\leq C R_\eps(t)^2
\end{align}
and applying Jensen's inequality once again yields
\begin{align}\label{est.naw.paB}
\int_0^t\int_{\pa B}|\na w_\eps(y,\tau)|d\sigma_y d\tau \leq C \,
(R_\eps(t)\, t)^{2/3}.
\end{align}
We now derive an upper bound for $R_\eps$.
Recall that $\rho_0$ is radially symmetric, thus we can write $\rho_0(y) = \overline{\rho_0}(|y|)$ for $y\in\overline{B}$.
Multiplying the ODE in \eqref{free.3.R.eps} by $R_\eps(t)(\rho_{cr}+\eps - \overline{\rho}_0(R_\eps(t)))$ and employing \eqref{g.map.eps} leads to
\begin{align*}
R_\eps(t)(\rho_{cr}+\eps - \overline{\rho}_0(R_\eps(t)))\pa_t R_\eps(t) = \fint_{\pa B}|\na w_\eps(y,t)|d\sigma_y
\end{align*}
and a time integration yields
\begin{align*}
G_\eps(R_\eps(t)) = \int_0^t\fint_{\pa B}|\na w_\eps(y,\tau)|d\sigma_y d\tau,\quad 0<t<T,\\
\nonumber
G_\eps(r) := \int_{R_0}^r s(\rho_{cr}+\eps - \overline{\rho}_0(s))ds.
\end{align*}
From \eqref{est.naw.paB} we infer
\begin{align}\label{COVID.1}
G_\eps(R_\eps(t)) \leq C (R_\eps(t)\, t)^{2/3},\quad 0<t<T.
\end{align}
Now we must distinguish two cases according to the value of
\begin{align*}
\mathfrak{R}(T)\equiv \liminf_{\eps\to 0}R_\eps(T)\geq R_0.
\end{align*}
{\em Case 1: $\mathfrak{R}(T)>R_0$.} In this case a positive constant $\eta(T)>0$ exists such that $R_\eps(T)\geq R_0 + \eta(T)$ for $\eps\to 0$. Given the assumption on the initial datum, it follows
\begin{align*}
\rho_{cr} - \overline{\rho_{0}}(s) \geq 
\rho_{cr} - \overline{\rho_{0}}(R_\eps(T)-\eta(T)/2)\geq 
\xi(T)>0,\quad R_\eps(T)-\eta(T)/2\leq s\leq R_\eps(T),
\end{align*}
as $\eps\to 0$. Thus, for some positive $\xi(T)$ it holds
\begin{align*}
G_\eps(R_\eps(T))\geq \frac{1}{2}\xi(T)( R_\eps(T)^2 - (R_\eps(T)-\eta(T)/2)^2) = 
\frac{1}{2}\xi(T)\eta(T)R_\eps(T) - \frac{1}{8}\xi(T)\eta(T)^2,
\end{align*}
as $\eps\to 0$. From \eqref{COVID.1} and Young's inequality (as well as the monotonicity of $R_\eps(t)$) we obtain
\begin{align*}
R_\eps(t)\leq C(T),\quad 0\leq t\leq T,\quad\eps\to 0.
\end{align*}
{\em Case 2: $\mathfrak{R}(T)=R_0$.} 
Since we are arguing up to subsequences, we can assume w.l.o.g.~that
$\lim_{\eps\to 0}R_\eps(T)=R_0$. In particular a constant $c_R(T)>0$ exists such that $R_\eps(T)\leq C(T)$ for $\eps>0$ small enough.
The monotonicity of $R_\eps(t)$ yields
\begin{align*}
R_\eps(t) \leq C(T)\quad 0\leq t\leq T,\quad \eps\to 0.
\end{align*}
In any case, we have proved that
\begin{align}\label{ub.R}
R_0\leq R_\eps(t)\leq C(T)\quad 0\leq t\leq T,\quad \eps\to 0.
\end{align}
We must now find a bound for $g_\eps[w_\eps](R_\eps(\cdot),\cdot)$.
To do so, we multiply \eqref{free.3.R.eps} by
$(\rho_{cr}+\eps-\overline{\rho_0}(R_\eps(t)))^{\lambda-1}$, with $0<\lambda<1$ arbitrary, and integrate on $[0,T]$:
\begin{align*}
\int_0^T \frac{g_{\eps}[w_\eps](R_{\eps}(t),t)}{R_{\eps}(t)
(\rho_{cr}+\eps-\overline{\rho_0}(R_\eps(t)))^{1-\lambda}}dt 
&= 
\int_0^T R_\eps'(t) (\rho_{cr}+\eps-\overline{\rho_0}(R_\eps(t)))^{\lambda-1}dt\\
&=\int_{R_0}^{R_\eps(T)}
(\rho_{cr}+\eps-\overline{\rho_0}(r))^{\lambda-1}dr,
\end{align*}
where the change of variables $r=R_\eps(t)$ in the integral is justified by the monotonicity of $R_\eps$. From \eqref{ub.R} and the definition \eqref{g.map.eps} of $g_\eps$ we deduce
\begin{align*}
\int_0^T\int_{\pa B} \frac{|\na_y w_\eps(y,t)|}{
(\rho_{cr}+\eps-\overline{\rho_0}(R_\eps(t)))^{2-\lambda}}d\sigma_y dt 
&\leq C(T)\int_{R_0}^{C(T)}
(\rho_{cr}-\overline{\rho_0}(r))^{\lambda-1}dr .
\end{align*}
Thanks to the assumptions on the initial datum the singularity inside the integral on the right-hand side of the above inequality is integrable, thus we get
\begin{align}\label{COVID.2}
\int_0^T\int_{\pa B} \frac{|\na_y w_\eps(y,t)|}{
	(\rho_{cr}+\eps-\overline{\rho_0}(R_\eps(t)))^{2-\lambda}}d\sigma_y dt 
&\leq C(T,\lambda),\quad 0<\lambda<1.
\end{align}
By writing
\begin{align*}
g_\eps[w_\eps](R_\eps(t),t) = 
(\rho_{cr}+\eps-\overline{\rho_0}(R_\eps(t)))^{-1}
\left( \int_{\pa B}|\na_y w_\eps|d\sigma_y\right)^{\frac{1}{2-\lambda}}
\cdot
\left( \int_{\pa B}|\na_y w_\eps|d\sigma_y\right)^{\frac{1-\lambda}{2-\lambda}},
\end{align*}
via H\"older's inequality we obtain
\begin{align*}
\|& g_\eps[w_\eps](R_\eps(\cdot),\cdot)\|_{L^p(0,T)}\\
&\leq 
\left\| (\rho_{cr}+\eps-\overline{\rho_0}(R_\eps(t)))^{-1}
\left( \int_{\pa B}|\na_y w_\eps|d\sigma_y\right)^{\frac{1}{2-\lambda}} \right\|_{L^{2-\lambda}(0,T)}
\left\|
\left( \int_{\pa B}|\na_y w_\eps|d\sigma_y\right)^{\frac{1-\lambda}{2-\lambda}}
\right\|_{L^{\frac{3(2-\lambda)}{1-\lambda}}(0,T)}
\end{align*}
with $\frac{1}{p} = \frac{1}{2-\lambda} + \frac{1-\lambda}{3(2-\lambda)}$, that is $p = p(\lambda) = \frac{3(2-\lambda)}{4-\lambda}$. Since $\lambda\in (0,1)\mapsto p(\lambda)\in (0,\infty)$ is decreasing and $p(0)=3/2$, 
from \eqref{est.naw.paB.0} and \eqref{COVID.2} we deduce
\begin{align}\label{est.g.eps}
\| g_\eps[w_\eps](R_\eps(\cdot),\cdot)\|_{L^{3/2 - \theta}(0,T)}\leq C(T,\theta),\quad 0<\theta\leq \frac{1}{2}.
\end{align}
Bound \eqref{est.g.eps} and the uniform lower bound on $R_\eps$ immediately yield the following estimate for $R_\eps'$ via \eqref{free.3.R.eps}
\begin{align}\label{est.dR.eps}
\| R_\eps'\|_{L^{3/2 - \theta}(0,T)}\leq C(T,\theta),\quad 0<\theta\leq \frac{1}{2}.
\end{align}
Another bound for $\na w_\eps$ in $L^{3}(0,T; L^{3/2}(\pa B))$ can easily be stated from \eqref{fp.est.1} as follows. $L^p$ interpolation leads to
\begin{align*}
\int_{0}^t \|\na w_\eps\|_{L^{3/2}(\pa B)}^3 d\tau \leq 
\int_{0}^t \|\na w_\eps\|_{L^{1}(\pa B)}
\|\na w_\eps\|_{L^{2}(\pa B)}^2 d\tau.
\end{align*}
From \eqref{g.map.eps} we obtain
\begin{align*}
\int_{0}^t \|\na w_\eps\|_{L^{3/2}(\pa B)}^3 d\tau \leq C
\int_0^t g_\eps[w_\eps](R_\eps(\tau),\tau)\|\na w_\eps\|_{L^{2}(\pa B)}^2 d\tau
\end{align*}
and \eqref{ub.R} yields
\begin{align*}
\int_{0}^t \|\na w_\eps\|_{L^{3/2}(\pa B)}^3 d\tau \leq C(T)
\int_0^t R_\eps(\tau)^{-2} g_\eps[w_\eps](R_\eps(\tau),\tau)\|\na w_\eps\|_{L^{2}(\pa B)}^2 d\tau,\quad 0\leq t\leq T.
\end{align*}
From \eqref{fp.est.1} we conclude that
\begin{align}\label{ub.naw.paB}
\|\na w_\eps\|_{L^3(0,T; L^{3/2}(\pa B))}\leq C.
\end{align}
Also, \eqref{fp.est.1}, \eqref{ub.R} and the $\eps-$uniform $L^\infty$ bounds for $w_\eps$ imply the following 
$\eps-$uniform bounds for $w_\eps$: 
\begin{align}\label{b.w}
\|w_\eps\|_{L^\infty(0,T; H^1(B))} + \|a(w_\eps)^{1/2}\Delta w_\eps\|_{L^2(0,T; L^2(B))} + \sqrt{\eps}\|\Delta w_\eps\|_{L^2(0,T; L^2(B))}
\leq C.
\end{align}
From \eqref{free.3.eps}, \eqref{est.g.eps} and \eqref{b.w} we deduce in a straightforward way
\begin{align}\label{b.wt}
\|\pa_t w_\eps\|_{L^{3/2-\theta}(0,T; L^2(B))}\leq C(T,\theta),\quad\forall 0<\theta\leq\frac{1}{2}.
\end{align}
Therefore Aubin-Lions Lemma yields the relative compactness of $w_\eps$ in $C^0([0,T]; L^2(B))$ (more general in $C^0([t_0,T]; L^p(B))$ for every $p<\infty$). Therefore, up to subsequences (not relabeled)
\begin{align*}
w_\eps\to w\quad \mbox{strongly in $C^0([0,T]; L^q(B))$ for every }
2\leq q<\begin{cases}
\infty & d=2\\ \frac{2d}{d-2} & d\geq 3
\end{cases}\\
w_\eps\rightharpoonup^* w \quad \mbox{weakly* in $L^\infty(0,T; H^1(B))$,}\\
w_\eps\rightharpoonup w\quad \mbox{weakly in $L^3(0,T; W^{1,3/2}(\pa B))$.}
\end{align*}
Concerning $R_\eps$, it holds
\begin{align*}
& 0 < R_0\leq R_\eps(t)\leq C(T)\qquad 0\leq t\leq T,\\
& |R_\eps(t_1)^2 - R_\eps(t_2)^2| = 2\int_{t_1}^{t_2}g_\eps[w_\eps](R_\eps(\tau),\tau)d\tau \leq
C(T,\xi)|t_1-t_2|^{1/3-\xi}\\
&\qquad\qquad 0\leq t_1 < t_2\leq T,\quad 0<\xi<\frac{1}{3}.
\end{align*}
Ascoli-Arzela's theorem yields the strong compactness of $(R_\eps)_{\eps>0}$ in $C^0([0,T])$. Thus, we can find a subsequence (not relabeled) of $R_\eps$ such that
\begin{align}\label{cnv.Reps}
R_\eps \to R\quad \mbox{strongly in $C^{0,\alpha}([0,T])$, ~~ $0<\alpha<1/3$.}
\end{align}
We define the new variable
\begin{align*}
v_\eps = \rho_{cr} + \int_0^{w_\eps} [a(s)]_\eps^{-1}ds.
\end{align*}
We also define
\begin{align*}
v := \rho_{cr} + \int_0^{w} a(s)^{-1}ds = 
\rho_{cr} + \int_0^{w} \frac{1}{f'(f^{-1}(s))}ds = 
\rho_{cr} + \int_{\rho_{cr}}^{f^{-1}(w)}d\sigma = f^{-1}(w).
\end{align*}
Consequently, we can write
\begin{align*}
w_\eps = f_\eps(v_\eps),\quad 
f_\eps = \mbox{inverse mapping of }s\in [0,\infty)\mapsto \rho_{cr} + \int_0^s [a(s')]_\eps^{-1}ds' \in [\rho_{cr},\infty).
\end{align*}
Dividing \eqref{free.3.eps} by $[a(w_\eps)]_\eps$ yields
\begin{align}\label{eq.veps}
\pa_t v_\eps = R_\eps(t)^{-2}g_\eps[w_\eps](R_\eps(t),t)y\cdot\na v_\eps
+ R_\eps(t)^{-2}\Delta f_\eps(v_\eps),\quad x\in B,~~ t\in (0,T),
\end{align}
where we point out that $[R_\eps(t)^{-2}]_\eps = R_\eps(t)^{-2}$ for 
$\eps>0$ small enough thanks to \eqref{ub.R}.
The strong convergence relations and the uniform $L^\infty$ bounds for $w_\eps$ imply 
\begin{align*}
|v_\eps - v| &\leq \int_{\min\{w_\eps,w\}}^{\max\{w_\eps,w\}}a(s)^{-1}ds
+ \int_0^{w_\eps}( a(s)^{-1} - [a(s)]_\eps^{-1} )ds\\
&\leq f^{-1}(\max\{w_\eps,w\}) - f^{-1}(\min\{w_\eps,w\})
+ \int_0^{ \min\{w_\eps,\eps\} }a(s)^{-1}ds\\
&\to 0\mbox{ a.e.~in }B\times (0,T),
\end{align*}
and therefore
\begin{align*}
v_\eps \to v\quad \mbox{strongly in }L^p(B\times (0,T)),~~\forall p<\infty.
\end{align*}
We now need to study the limit of the term $g_\eps[w_\eps](R_\eps(t),t)$.
It is straightforward to prove that $w_\eps$ is radially symmetric. Indeed, this property follows from the symmetry of \eqref{free.3.eps}, the initial datum and the uniqueness of solutions of \eqref{free.3.eps} (which can be easily shown). We also claim that the outward radial derivative of $w_\eps$ at $\pa B$ is non-positive. Indeed, if such derivative was positive, since $w_\eps=0$ at $\pa B$, this would imply $w_\eps<0$ at some point inside $B$ (notice that $y\in B\mapsto w_\eps(y,t)\in\R$ is continuous since $w_\eps\in C^0((0,T]; H^1(B))$ and radially symmetric), which is not the case. 
It follows
\begin{align}\label{id.mod}
|\na w_\eps(y,t)| = -y\cdot\na w_\eps(y,t)\quad y\in\pa B,~~t>0.
\end{align}
Therefore,
\begin{align*}
g_\eps[w_\eps](R_\eps(t),t) \rightharpoonup\int_{\pa B}\frac{-y\cdot\na w(y,t)}{\rho_{cr}-\rho_0(R(t)y)}dy\qquad\mbox{weakly in $L^{3/2-\theta}(0,T)$ for every $\theta\in (0,1/2]$.}
\end{align*}
Being $w\in C^0([0,T]; H^1(B))$ and radially symmetric, $w(t)$ is also continuous in $B$ for $t\in [0,T]$, and its outward radial derivative at $\pa B$ is non-positive, just like $w_\eps$. Thus,
\begin{align*}
\int_{\pa B}\frac{-y\cdot\na w(y,t)}{\rho_{cr}-\rho_0(R(t)y)}dy = 
\int_{\pa B}\frac{|\na w(y,t)|}{\rho_{cr}-\rho_0(R(t)y)}dy =
g[w](R(t),t)\quad\mbox{for }t>0.
\end{align*}
At this point we test \eqref{eq.veps} against $R_\eps(t)^d\phi$ with
$\phi\in C^1_c(\overline{B}\times [0,T))$ arbitrary and obtain
\begin{align*}
\int_0^T\int_B v_\eps\pa_t(R_\eps(t)^d\phi) \, dy dt + 
\int_B \rho_{0}(R_0 y)R_0^d\phi(y,0)dy \\
=
\int_0^T\int_B R_\eps(t)^{d-2} g_\eps[w_\eps](R_\eps(t),t) v_{\eps}\, \Div_y(y\phi)dy dt\\
+ \int_0^T\int_B R_\eps(t)^{d-2}\na_y f_\eps(v_\eps)\cdot\na_y\phi dy dt
\\
- \int_0^T\int_{\pa B}R_\eps(t)^{d-2}\phi y\cdot\na_y f_\eps(v_\eps)d\sigma_y dt
\end{align*}
and employing \eqref{free.3.R.eps} yields
\begin{align*}
\int_0^T\int_B v_\eps R_\eps(t)^d\pa_t \phi \, dy dt + 
\int_B \rho_{0}(R_0 y)R_0^d\phi(y,0)dy =
\int_0^T\int_B R'_\eps(t) R_\eps(t)^{d-1} v_{\eps}\, y\cdot\na_y\phi\, dy dt\\
+ \int_0^T\int_B R_\eps(t)^{d-2}\na_y f_\eps(v_\eps)\cdot\na_y\phi dy dt
- \int_0^T\int_{\pa B}R_\eps(t)^{d-2}\phi y\cdot\na_y f_\eps(v_\eps)d\sigma_y dt.
\end{align*}
Now, we can pass to the limit in the above identity and obtain
\begin{align*}
\int_0^T\int_B v R(t)^d\pa_t \phi \, dy dt + 
\int_B \rho_{0}(R_0 y)R_0^d\phi(y,0)dy =
\int_0^T\int_B R'(t) R(t)^{d-1} v\, y\cdot\na_y\phi\, dy dt\\
+ \int_0^T\int_B R(t)^{d-2}\na_y f(v)\cdot\na_y\phi dy dt
- \int_0^T\int_{\pa B}R(t)^{d-2}\phi y\cdot\na_y f(v)d\sigma_y dt .
\end{align*}
Applying the change of variables $x = R(t)y$, $u(x,t)=v(y,t)=v(R(t)^{-1}x,t)$, $\psi(x,t)=\phi(y,t)=\phi(R(t)^{-1}x,t)$ and recalling that \eqref{id.mod} holds also for $w$ lead to
\begin{align*}
\int_0^T\int_{|x|<R(t)} u \pa_t \psi \, dx dt + 
\int_{|x|<R_0} \rho_{0}(x)\psi(x,0)dx \\
= \int_0^T\int_{|x|<R(t)} \na_x f(u)\cdot\na\psi dx dt
+ \int_0^T \int_{|x|=R(t)}\psi |\na_x f(u)| d\sigma_x dt .
\end{align*}
The limit in the ODE in \eqref{free.3.R.eps} is carried out by integrating the equation in time, i.e., 
\begin{align*}
R(t) = R_0 + \int_0^t\frac{g[w](R(\tau),\tau)}{R(\tau)}d\tau,\qquad t>0.
\end{align*}
This finishes the proof of the first part of Theorem \ref{thm.fb}.

\subsection{Relation between \eqref{fb}--\eqref{fb.ic} and
\eqref{1}--\eqref{2}.}
We are going to prove the following:
\begin{lemma}\label{lem.fb}
	Let $(u,R)$ be a weak solution to \eqref{fb}--\eqref{fb.ic} 
	according to the first part of Theorem~\ref{thm.fb}.
	Then, the function $\rho : \Omega\times (0,T)\to [0,\infty)$ defined as
	\begin{align}\label{rho.fb}
	\rho(t) = u(t)\chf{{E(t)}} + \rho_{0}\chf{\Omega\backslash {E(t)}}\quad t > 0,\quad E(t) = \{|x|\leq R(t)\},
	\end{align}
	is a weak solution to \eqref{1}--\eqref{2} according to Theorem~\ref{thm.existence}. 
	Furthermore, in the case that $\fint_\Omega\rho_0 dx < \rho_{cr}$,
	the steady state $\hat{\rho} = \lim_{t\to\infty}\rho(t)$ is given by
	\begin{align*}
	\hat{\rho} = \rho_{cr}\chf{{E_\infty}} + \rho_{0}\chf{\Omega\backslash {E_\infty}},\quad E_\infty = \{|x|\leq R_\infty\},\quad R_\infty = \lim_{t\to \infty}R(t).
	\end{align*}
\end{lemma}

\begin{proof}
	Notice that from the fact that 
	$\pa_t R\geq 0$ on $(0,\infty)$ it follows that 
	$E_0\subset E(t)$ for $t\geq 0$ and therefore $\rho_{0}< \rho_{cr}$ on $\Omega\backslash \overline{E(t)}$.
	It follows that ${E(t)} = \{\rho(t)>\rho_{cr}\}$ for $t\geq 0$.
	The assumptions on $u$ imply that \eqref{weaksol.1} holds. 
	We wish to prove that also \eqref{weaksol.2} is fulfilled.
	We first prove that the following integral identity 
	\begin{align}\label{freedom.1}
	-\int_0^T\int_\Omega &\rho \pa_t \phi dx dt - \int_\Omega \rho_{0}\phi(0) dx + \int_0^T\int_\Omega\nabla\phi\cdot\nabla f(\rho) dx dt \\ \nonumber 
	&= \int_0^T\int_{E(t)}\nabla\phi\cdot\nabla f(u) dx dt
	+\int_0^T \int_{\pa E(t)}\phi |\nabla f(u)|\, d\Sigma dt\\
	&\qquad \nonumber
	-\int_0^T\int_{E(t)} u \pa_t \phi dx dt
	-\int_{E_0}\rho_0 \phi(0)dx
	\end{align}
	holds for $\phi\in C^\infty_c(\Omega\times [0,T))$ arbitrary and $u : \mathcal{G}_T\to [0,\infty)$, $R : [0,T]\to [R_0,\infty)$ smooth enough satisfying just \eqref{fb.bc}--\eqref{fb.ic}.
	Since the right-hand side of \eqref{freedom.1} vanishes for weak solutions to \eqref{fb}--\eqref{fb.ic}, a density argument will then yield the statement.
	
	It holds
	\begin{align}\label{fb.1}
	-\int_0^T\int_\Omega &\rho \pa_t \phi dx dt - \int_\Omega \rho_{0}\phi(0) dx \\
	\nonumber
	=& 
	-\int_0^T\int_{E(t)} u \pa_t \phi dx dt 
	-\int_0^T\int_{\Omega\backslash E(t)} \rho_0 \pa_t \phi dx dt
	- \int_\Omega \rho_{0}\phi(0) dx\\
	\nonumber
	=& 
	-\int_0^T\int_{E(t)} (u-\rho_0) \pa_t \phi dx dt 
	-\int_0^T\int_{\Omega} \rho_0 \pa_t \phi dx dt
	- \int_\Omega \rho_{0}\phi(0) dx\\
	\nonumber
	=& 
	-\int_0^T\int_{E(t)} \pa_t [(u-\rho_0) \phi] dx dt 
	+\int_0^T\int_{E(t)} \phi \pa_t u dx dt \\
	\nonumber
	=& -\int_0^T\int_{E(t)} \pa_t [(u-\rho_0) \phi] dx dt 
	-\int_{E_0} \phi(0) \rho_0 dx dt
	-\int_0^T\int_{E(t)}u\pa_t\phi dx dt.
	\end{align}
	From the definition of $E(t)$ it follows
	\begin{align*}
	\frac{d}{dt}\int_{E(t)} (u-\rho_0)\phi dx &= 
	\frac{d}{dt}\int_{\pa B_1}\int_0^{R(t)}(u-\rho_0)\phi r^{d-1} dr d\sigma ,
	\end{align*}
	where $(r,\sigma)\in [0,\infty)\times \pa B_1$ are the spherical coordinates on $\R^d$. It follows
	\begin{align*}
	\frac{d}{dt}&\int_{E(t)} (u-\rho_0)\phi dx \\
	&= 
	\int_{\pa B_1}[(u-\rho_0)\phi]\vert_{r=R(t)} R(t)^{d-1}\pa_t R(t) d\sigma + 
	\int_{\pa B_1}\int_0^{R(t)}
	\pa_t[(u-\rho_0)\phi] r^{d-1} dr d\sigma\\
	&= 
	\int_{\pa B_1}[(u-\rho_0)\phi]\vert_{r=R(t)} R(t)^{d-1}\pa_t R(t) d\sigma + 
	\int_{E(t)} \pa_t [(u-\rho_0) \phi] dx .
	\end{align*}
	So from \eqref{fb.1} it follows
	\begin{align*}
	-\int_0^T\int_\Omega &\rho \pa_t \phi dx dt - \int_\Omega \rho_{0}\phi(0) dx \\
	=& 
	-\int_0^T \frac{d}{dt}\int_{E(t)} (u-\rho_0)\phi dx dt \\
	&+\int_0^T \int_{\pa B_1}[(u-\rho_0)\phi]\vert_{r=R(t)} R(t)^{d-1}\pa_t R(t) d\sigma dt
	-\int_{E_0} \phi(0) \rho_0 dx dt
	-\int_0^T\int_{E(t)}u\pa_t\phi dx dt\\
	=&\int_0^T \int_{\pa B_1}[(u-\rho_0)\phi]\vert_{r=R(t)} R(t)^{d-1}\pa_t R(t) d\sigma dt
	-\int_{E_0} \phi(0) \rho_0 dx dt
	-\int_0^T\int_{E(t)}u\pa_t\phi dx dt.
	\end{align*}
	The boundary conditions \eqref{fb.bc} imply
	\begin{align}\label{fb.2}
	-\int_0^T\int_\Omega &\rho \pa_t \phi dx dt - \int_\Omega \rho_{0}\phi(0) dx \\ \nonumber
	=& \int_0^T \int_{\pa E(t)}\phi
	|\nabla f(u)| d\Sigma dt
	-\int_{E_0} \phi(0) \rho_0 dx dt
	-\int_0^T\int_{E(t)}u\pa_t\phi dx dt.
	\end{align}
	Let us now compute
	\begin{align*}
	\int_0^T\int_\Omega\nabla\phi\cdot\nabla f(\rho) dx dt = 
	-\int_0^T\int_\Omega\Delta\phi\, f(\rho) dx dt =
	-\int_0^T\int_{E(t)}\Delta\phi\, f(u) dx dt ,
	\end{align*}
	where the last identity follows from the fact that $\rho_0 < \rho_{cr}$ on $\Omega\backslash E(t)$ for every $t\geq 0$.
	Employing the boundary conditions \eqref{fb.bc} yields
	\begin{align*}
	\int_0^T\int_\Omega &\nabla\phi\cdot\nabla f(\rho) dx dt 
	= \int_0^T\int_{E(t)}\nabla\phi\cdot\nabla f(u) dx dt.
	\end{align*}
	By summing the above identity with \eqref{fb.2} we get \eqref{freedom.1}. Thus $\rho$ is a weak solution to \eqref{1}--\eqref{2} in the sense of Theorem \ref{thm.existence}.
	Finally, if $\fint_\Omega\rho_0 dx < \rho_{cr}$,
	the expressions for $\hat{\rho}$ and $R_\infty$ follow from Theorem~\ref{thm.ltb.diff} and mass conservation, respectively.
	This finishes the proof of the lemma.
\end{proof}
The proof of Theorem \ref{thm.fb} is now concluded.

\section{Numerical results}\label{sec.numerics}

In this section, we discuss several numerical tests. The main goal is to observe the long time behavior of solutions as indicated in Theorem~\ref{thm.ltb.diff}
and segregation phenomena that are not covered by our theoretical investigations, but also observed in \cite{WGA2021}.
Since our intention is not the design of a better numerical scheme, we simply use the standard backward Euler and central finite difference scheme to 
discretize~\eqref{1}.

In the numerical tests, we take $\Omega = (-1,1)^2\in \R^2$ and assume $f$ to be
\begin{align*}
f(r) = (r-1)_+^2,\qquad r\geq 0.
\end{align*}
Therefore $\rho_{cr} = 1$ in this setting. 

\subsection{Numerical scheme.}
The numerical scheme reads as follows. An implicit Euler time discretization is employed:
\begin{align}\label{Euler}
\rho^{(n)} - \tau\Div (f'(\rho^{(n)})\nabla\rho^{(n)}) = \rho^{(n-1)},\quad n\geq 1,
\end{align}
where $\rho^{(0)}$ is the initial datum and $\rho^{(n)}$ is the solution at time $t_n = n\tau$. The spatial derivatives are discretized via centered finite differences:
\begin{align}\label{discr}
\begin{cases}
\rho^{(n)}_{ij} - 
\frac{\tau}{h_x}(J^{(1,n)}_{i+1/2,j} - J^{(1,n)}_{i-1/2,j}) -
\frac{\tau}{h_y}(J^{(2,n)}_{i,j+1/2} - J^{(2,n)}_{i,j-1/2})
 = \rho^{(n-1)}_{ij},\medskip\\
J_{ij}^{(1,n)} = \frac{1}{h_x}f'(\rho^{(n)}_{ij})(\rho^{(n)}_{i+1/2,j} - \rho^{(n)}_{i-1/2,j}),\medskip\\
J_{ij}^{(2,n)} = \frac{1}{h_y}f'(\rho^{(n)}_{ij})(\rho^{(n)}_{i,j+1/2} - \rho^{(n)}_{i,j-1/2}),
\end{cases}
\end{align}
where $\rho^{(n)}_{ij}$ is the value of $\rho^{(n)}$ at the point $(x_i,y_j)$ of the spatial grid, namely
\begin{align*}
(x_i,y_j) = (-1 + i h_x, -1 + j h_y),\quad i = 0,\ldots,N_x,~~
j = 0,\ldots,N_y,\quad h_x = \frac{2}{N_x},\quad h_y = \frac{2}{N_y}.
\end{align*}
At every time step, \eqref{discr} is solved via the following fix point scheme:
\begin{align}\label{scheme}
\begin{cases}
\rho^{(n,0)}_{ij} := \rho^{(n-1)}_{ij},\medskip\\
k\geq 1:~~
\begin{cases}
\rho^{(n,k)}_{ij} - 
\frac{\tau}{h_x}(J^{(1,n,k)}_{i+1/2,j} - J^{(1,n,k)}_{i-1/2,j}) -
\frac{\tau}{h_y}(J^{(2,n,k)}_{i,j+1/2} - J^{(2,n,k)}_{i,j-1/2})
= \rho^{(n-1)}_{ij},\medskip\\
J_{ij}^{(1,n,k)} = \frac{1}{h_x}f'(\rho^{(n,k-1)}_{ij})(\rho^{(n,k)}_{i+1/2,j} - \rho^{(n,k)}_{i-1/2,j}),\medskip\\
J_{ij}^{(2,n,k)} = \frac{1}{h_y}f'(\rho^{(n,k-1)}_{ij})(\rho^{(n,k)}_{i,j+1/2} - \rho^{(n,k)}_{i,j-1/2}).
\end{cases}
\end{cases}
\end{align}
Notice that \eqref{scheme} is just a linear system. 
\newline
The computation of the sequence $(\rho^{(n,k)}_{ij})_{k\geq 0}$ is stopped once the relative difference between two subsequent iterates is lower than a certain tolerance. The final iterate is defined as the approximate solution $\rho^{(n)}_{ij}$ to \eqref{discr}. Otherwise,
in case the number of fixed point iterations exceed a certain threshold without achieving convergence, the iteration is stopped and an error flag is returned, which is then used in the time iteration (see following part).
\newline
While the spatial grid is uniform and the number of intervals $N_x$, $N_y$ are fixed, the timestep $\tau$ is chosen at every time iteration in an adaptive way. Precisely, if the relative difference between the new and the old iterates is larger than a certain tolerance, or the fixed point procedure returned an error flag (see previous part), the timestep is reduced by a factor $1/2$, while if the relative difference between the new and the old iterates and the number of fixed point iterations are smaller than certain tolerances, then the timestep is increased by a factor $11/10$.

\subsection{Results.} We present here some simulation results employed with the scheme illustrated in the previous subsection. Three sets of results are presented, the first corresponding to the {\em supercritical} case (that is, the spatial average of the initial datum is larger than the critical value), the second and third corresponding to the {\em subcritical} case (that is, the spatial average of the initial datum is smaller than the critical value).
In both cases we observe that the evolution of the system is driven by the diffusion in the supercritical region (i.e.~the region of $\Omega$ where the solution is larger than $\rho_{cr}$) together with mass conservation. Indeed, while clearly the diffusion in the supercritical region pushes the mass downwards towards the critical density, this process drives the evolution of the solution in the whole spatial domain thanks to mass conservation: the mass distribution in the subcritical region (where the solution is smaller than $\rho_{cr}$) is pushed aside and upwards by the mass coming down from the supercritical region. We also observe that in the supercritical case these two processes are sufficiently strong to bring the whole mass distribution to the constant steady state $\rho_{\infty}$, while in the subcritical case the evolution of the system stops before achieving this convergence: the solution is simply pushed by the diffusion towards a non-constant profile which lies entirely in the subcritical region, and after such point the solution do not change any more. We remark that these phenomena can be already observed after a rather short simulation time.

In what follows $\fint_\Omega = |\Omega|^{-1}\int_\Omega$.
\medskip\\
{\bf Supercritical case.} We choose as initial datum
\begin{align*}
\rho^{(0)}(x) = \rho_{\infty} \frac{e^{-3|x|^2}}{\fint_\Omega e^{-3|y|^2} dy} \quad x\in\Omega,\qquad \rho_{\infty} = \frac{3}{2} > 1 = \rho_{cr}.
\end{align*}
The spatial domain discretization is a uniform grid of $401\times 401$ points.

\begin{figure}[h]
\includegraphics[width=\textwidth]{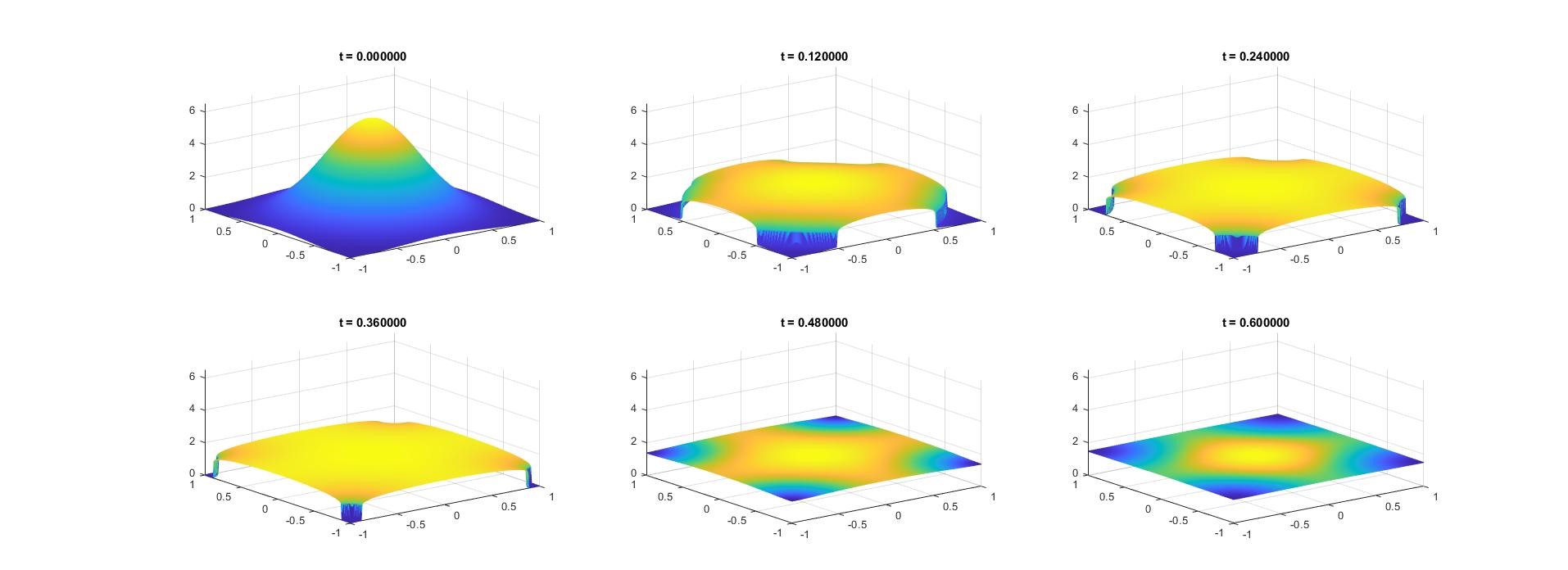}
\caption{Supercritical case. We observe convergence to the constant steady state 
$\rho_{\infty}$ of average mass.}
\label{fig1}
\end{figure}
In this case we observe that the solution converges towards the constant steady state given by the average mass $\rho_{\infty}$, see Figure~\ref{fig1}. The convergence is faster around the center of the mass distribution, where the solution takes the largest values, and it is slowest close to the vertexes of the rectangular domain $\Omega = [-1,1]^2$. While the solution remains very small around such points for some time, eventually the combination of diffusion in the supercritical region and mass conservation pushes the solution everywhere towards the constant steady state. Such a behavior is to be expected from the analysis result in Theorem \ref{thm.ltb.diff}.
\medskip\\
{\bf Subcritical case.} This is the most interesting case, as the analysis provides us with weaker results in this regime (namely, the steady state $\hat{\rho} = \lim_{t\to\infty}\rho(t)$ is unknown in general). We discretize now the spatial domain via a uniform grid with $501\times 501$ points (the increased number of grid points with respect to the supercritical case aims at obtaining a more accurate result for the more interesting subcritical case).
We present two examples.\medskip\\
{\em Example 1.} We choose as initial datum the sum of two Gaussians with the same variance: 
\begin{align*}
\rho_{in}(x)= \rho_{\infty}\frac{e^{-16|x-x^{(0)}|^2} + e^{-16|x+x^{(0)}|^2}}{\fint_\Omega (e^{-16|y-x^{(0)}|^2} + e^{-16|y+x^{(0)}|^2})dy}
\quad x\in\Omega,
\qquad x^{(0)} = \left(-\frac{7}{20}, \frac{7}{20}\right),\qquad
\rho_{\infty} = \frac{3}{4}.
\end{align*}
In Figure \ref{fig2}, we observe that the two mass distributions, which are quite separate at initial time, are pushed towards a joint mass distribution which roughly equals the critical density $\rho_{cr}=1$ on a large part of $\Omega$, while vanishes near the border of $\Omega$.
\begin{figure}[h]
\includegraphics[width=\textwidth]{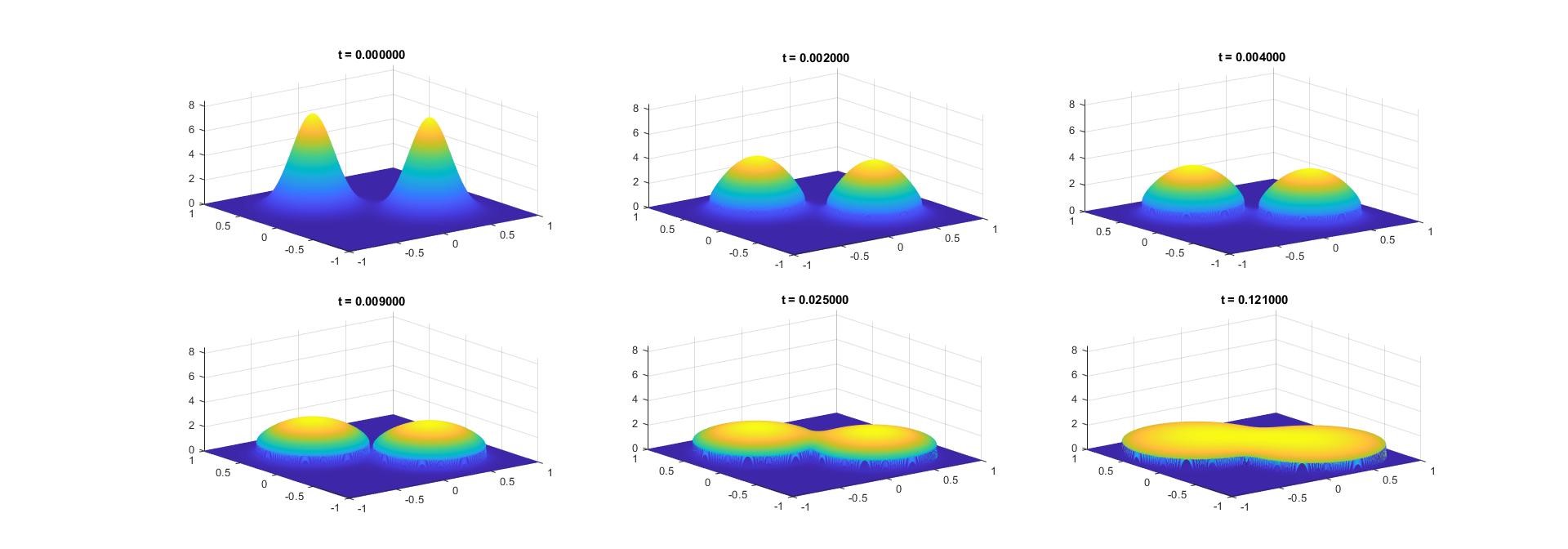}
\caption{Subcritical case, example 1. In the final configuration the two initial mass distributions merge into a single one.}
\label{fig2}
\end{figure}

{\em Example 2.} We choose again as initial datum the sum of two Gaussians with 
the same variance, but differently from the previous example, in this case the variance and distance between the centers are larger and the average density is smaller:
\begin{align*}
\rho_{in}(x)= \rho_{\infty}\frac{e^{-8|x-x^{(0)}|^2} + e^{-8|x+x^{(0)}|^2}}{\fint_\Omega (e^{-8|y-x^{(0)}|^2} + e^{-8|y+x^{(0)}|^2})dy}
\quad x\in\Omega,
\qquad x^{(0)} = \left(\frac{3}{4}, -\frac{3}{4}\right),\qquad
\rho_{\infty} = \frac{3}{10}.
\end{align*}
In Figure \ref{fig3}, we observe that this time the two mass distributions remain separated until the solution falls entirely underneath the critical value and the solution stops evolving. The final configuration of the system is given by two disjoint mass distributions separated by a valley.
\begin{figure}[h]
\includegraphics[width=\textwidth]{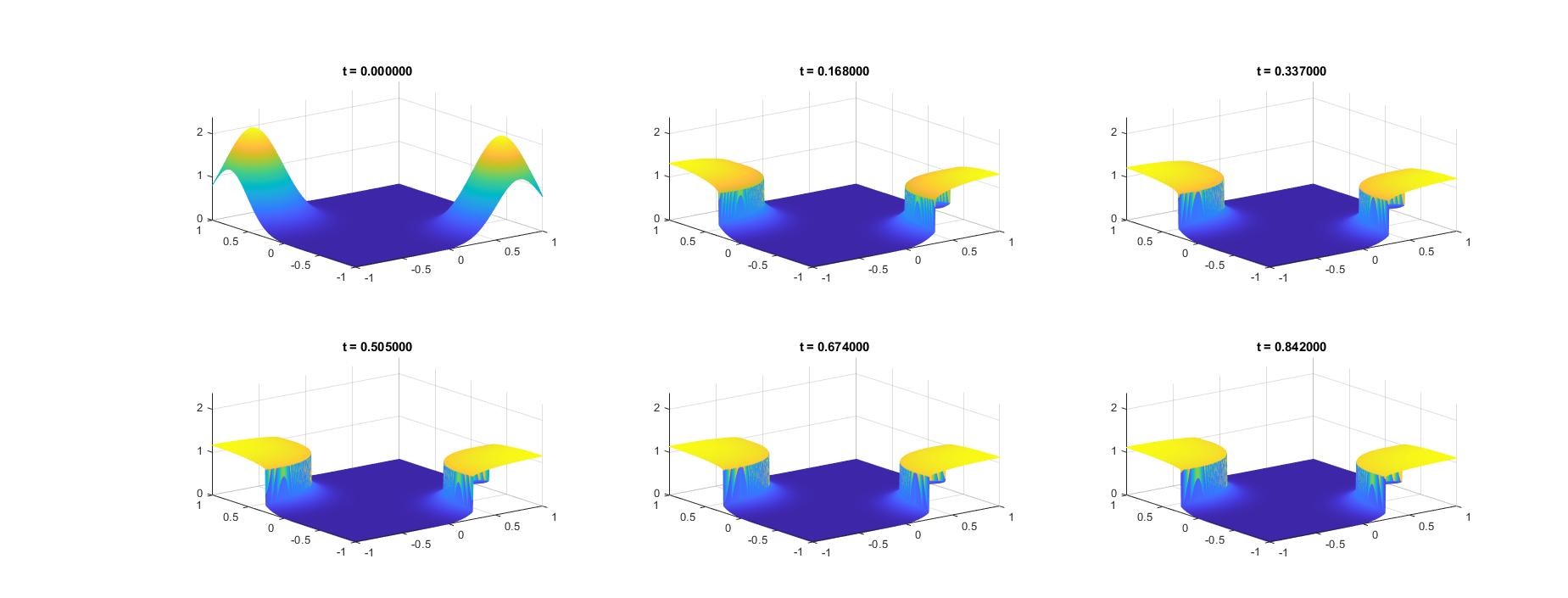}
\caption{Subcritical case, example 2. Final configuration is made up by two disjoint mass distributions separated by a valley.}
\label{fig3}
\end{figure}

\section{Conclusion}
In this paper we have proved the existence of a bounded unique global-in-time solution for a strongly degenerate parabolic problem modeling material flow problems and swarming behavior. Furthermore we have proved that the solution converges to a steady state, which is generally expected in diffusion equations.
Such steady state is equal to the average of the initial datum in the supercritical case, while in the subcritical case it is not known for general initial data, although we have showed that it is a.e.~not larger than the critical density. 
The convergence is proved to be exponential in the $L^2(\Omega)$ norm in the supercritical case, while in the subcritical case it is shown to be algebraic in a negative homogeneous Sobolev seminorm.
We also proved that, in the case of radially symmetric initial data which are decreasing in the radial coordinate, the strongly degenerate parabolic equation can be derived from a free boundary problem which only degenerates on the free boundary. Furthermore, we have proved global existence of weak solutions to the latter problem.
Numerical experiments for both cases supercritical and subcritical are carried out. In the subcritical case,
a segregation effect is observed when the distance between two high density regions is large enough, while no segregation happens when such distance is relatively small. This motivates future analytical projects aiming at determining criteria for initial data yielding the onset of segregation phenomena, and the investigation of how the solution generates segregation behavior on a broader viewpoint. The issue of determining the value of the steady state in the subcritical case is also worth investigating.
More broadly, it would also be interesting to analyze the qualitative behavior of the solutions, e.g.~whether the equation preserves the the solution segregation. Concerning this issue, we believe that energy methods would not really help in this investigation, since they appear to provide little to no information about the behavior and properties of the solution in the subcritical region. Instead, a possibly more fruitful approach would involve dealing with a free boundary problem associated to the degenerate parabolic equation, which would generalize system \eqref{fb}--\eqref{fb.ic} to the case of more general domains $\Omega$ and initial data $\rho_{0}$.

\section*{Appendix}

\subsection*{Auxiliary results.}
We prove here a generalized Poincar\'e's Lemma.
\begin{lemma}\label{lem.new.f}
	Let $\Omega$ be connected, $f$ as in \eqref{hp.f} and 
	$0\leq \rho_\infty\neq \rho_{cr}$. 
	For every $R>0$ a constant $C_R>0$ exists such that
	\begin{align*}
	\|f(\rho)-f(\rho_\infty)\|_{L^2(\Omega)}\leq C_R \|\na f(\rho)\|_{L^2(\Omega)}
	\end{align*}
	for every non-negative $\rho\in L^\infty(\Omega)$ such that $\na f(\rho)\in L^2(\Omega)$,
	$\|\rho\|_{L^\infty(\Omega)}\leq R$
	and $\fint_\Omega\rho dx = \rho_\infty$.
\end{lemma}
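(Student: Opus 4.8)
The plan is to argue by contradiction via a compactness (Rellich) argument, handling the subcritical and supercritical regimes by the same scheme but closing the contradiction differently. Throughout I assume $\Omega$ is regular enough (e.g.\ Lipschitz) for the Poincar\'e--Wirtinger inequality and the compact embedding $H^1(\Omega)\hookrightarrow\hookrightarrow L^2(\Omega)$ to hold, as is implicit elsewhere in the paper. Suppose the claimed inequality fails for some fixed $R>0$; then there is a sequence $(\rho_n)$ of admissible functions ($0\le\rho_n$, $\|\rho_n\|_{L^\infty}\le R$, $\na f(\rho_n)\in L^2$, $\fint_\Omega\rho_n\,dx=\rho_\infty$) with $\|f(\rho_n)-f(\rho_\infty)\|_{L^2}> n\|\na f(\rho_n)\|_{L^2}$. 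Writing $g_n=f(\rho_n)$, the bound $0\le g_n\le \max_{[0,R]}f$ makes $g_n-f(\rho_\infty)$ bounded in $L^2$, whence $\|\na g_n\|_{L^2}\to0$. Setting $w_n=(g_n-f(\rho_\infty))/\|g_n-f(\rho_\infty)\|_{L^2}$ (the denominator is strictly positive by the strict inequality), we get $\|w_n\|_{L^2}=1$ and $\|\na w_n\|_{L^2}<1/n\to0$, so $(w_n)$ is bounded in $H^1$; by Rellich a subsequence converges in $L^2$ and a.e.\ to a limit $w$ with $\na w=0$, hence (as $\Omega$ is connected) $w$ is a constant with $\|w\|_{L^2}=1$, so $w=\pm|\Omega|^{-1/2}\neq0$. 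The whole point is then to contradict this nonvanishing using the mass constraint $\fint_\Omega\rho_n=\rho_\infty$.

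In the subcritical case $\rho_\infty<\rho_{cr}$ this is immediate. Here $f(\rho_\infty)=0$, so $w_n=g_n/\|g_n\|_{L^2}\ge0$ and $w=|\Omega|^{-1/2}>0$. On the other hand, from $\rho_\infty=\fint_\Omega\rho_n$ and $\rho_n\ge0$ one gets $|\{\rho_n\ge\rho_{cr}\}|\le|\Omega|\rho_\infty/\rho_{cr}$, so the set $\{\rho_n<\rho_{cr}\}$, on which $g_n=0=w_n$ by \eqref{hp.f}, has measure at least $\delta_0:=|\Omega|(1-\rho_\infty/\rho_{cr})>0$. Then $\int_\Omega|w_n-w|^2\,dx\ge\int_{\{w_n=0\}}w^2\,dx\ge|\Omega|^{-1}\delta_0>0$, contradicting $w_n\to w$ in $L^2$. (Equivalently one can skip the contradiction in this regime: a function vanishing on a fixed fraction $\delta_0$ of $\Omega$ obeys a Poincar\'e inequality with constant depending only on $\delta_0$ and $\Omega$, obtained by estimating its mean through the vanishing set, and applying this to $g_n=f(\rho)$ gives the claim directly.)

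The supercritical case $\rho_\infty>\rho_{cr}$ is the main difficulty, because $f(\rho_\infty)>0$ and the flatness of $f$ on $[0,\rho_{cr}]$ prevents $\fint_\Omega f(\rho)$ from being controlled pointwise by $f(\fint_\Omega\rho)=f(\rho_\infty)$; the constraint must instead be exploited at first order. First I would identify the a.e.\ limit $c$ of $g_n$: writing $g_n=f(\rho_\infty)+\beta_n w_n$ with $\beta_n$ bounded, pass to $\beta_n\to\beta$, so $g_n\to c:=f(\rho_\infty)+\beta w$ a.e. If $c>0$, then (the flat set must shrink to measure zero and $f$ is a continuous strictly increasing bijection of $[\rho_{cr},R]$ onto $[0,f(R)]$) $\rho_n\to f^{-1}(c)$ a.e.\ and in $L^1$, forcing $f^{-1}(c)=\rho_\infty$ by mass conservation, i.e.\ $c=f(\rho_\infty)$; the alternative $c=0$ would give $\limsup\rho_n\le\rho_{cr}$ a.e.\ and hence $\rho_\infty\le\rho_{cr}$ by reverse Fatou, a contradiction. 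Thus $c=f(\rho_\infty)$, so $\beta=0$ and $\rho_n\to\rho_\infty$ a.e. Now I linearize: by the De L'Hospital argument already used in the proof of Lemma \ref{lem.exp}, there is $C$ with $|s-\rho_\infty|\le C|f(s)-f(\rho_\infty)|$ for all $s\in[0,R]$, so $q_n:=(\rho_n-\rho_\infty)/\beta_n$ is bounded in $L^2$ and admits a weak limit $q$. Since the difference quotients $(f(\rho_n)-f(\rho_\infty))/(\rho_n-\rho_\infty)\to f'(\rho_\infty)$ a.e.\ and are bounded, while $q_n\rightharpoonup q$, the products satisfy $w_n=\big[(f(\rho_n)-f(\rho_\infty))/(\rho_n-\rho_\infty)\big]q_n\rightharpoonup f'(\rho_\infty)q$; comparing with the strong limit $w_n\to w$ gives $q=w/f'(\rho_\infty)$, a nonzero constant. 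But the mass constraint reads $\int_\Omega q_n\,dx=\beta_n^{-1}\int_\Omega(\rho_n-\rho_\infty)\,dx=0$, so $\int_\Omega q\,dx=0$, impossible for a nonzero constant. This contradiction proves the inequality.

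The step I expect to be most delicate is the supercritical linearization: controlling $q_n=(\rho_n-\rho_\infty)/\beta_n$ uniformly in $L^2$ (via the comparison $|s-\rho_\infty|\lesssim|f(s)-f(\rho_\infty)|$, which is exactly where $f'(\rho_\infty)>0$ and $\rho_\infty>\rho_{cr}$ enter) and justifying the passage to the weak limit of the product of the a.e.-convergent difference quotients with the weakly convergent $q_n$. The subcritical case, by contrast, reduces to a Poincar\'e inequality for functions vanishing on a set of definite measure and is comparatively routine.
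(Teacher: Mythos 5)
Your proof is correct, and it shares the paper's global skeleton: argue by contradiction, normalize $w_n=(f(\rho_n)-f(\rho_\infty))/\|f(\rho_n)-f(\rho_\infty)\|_{L^2(\Omega)}$, use Rellich compactness and connectedness to extract a nonzero constant limit $w$, and clash with the mass constraint $\fint_\Omega\rho_n\,dx=\rho_\infty$. The two case closures, however, are genuinely different from the paper's, and both of yours are sound. In the subcritical case the paper argues pointwise: $w>0$ forces $f(\rho_n)>0$, hence $\rho_n>\rho_{cr}$, eventually a.e., and Fatou's lemma gives $\rho_{cr}\le\rho_\infty$, a contradiction; your Chebyshev bound $|\{\rho_n\ge\rho_{cr}\}|\le|\Omega|\rho_\infty/\rho_{cr}$ instead shows $w_n$ vanishes on a set of measure at least $\delta_0=|\Omega|(1-\rho_\infty/\rho_{cr})>0$, incompatible with $w_n\to w=\pm|\Omega|^{-1/2}$ in $L^2(\Omega)$. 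This is more elementary and, as you note, can be upgraded to a direct, contradiction-free proof with an explicit constant (Poincar\'e for functions vanishing on a set of definite measure), which the paper's argument does not yield. In the supercritical case both proofs linearize $f$ at $\rho_\infty$ (this is where $f'(\rho_\infty)>0$ enters) and both end with ``a mean-zero sequence converges to a nonzero constant,'' but the mechanics differ: the paper first proves $\|f(\rho_n)-f(\rho_\infty)\|_{L^2(\Omega)}\to0$ via an Egorov-type uniform-convergence argument and then uses strong convergence of the pointwise ratio $(\rho_n-\rho_\infty)/(f(\rho_n)-f(\rho_\infty))\to 1/f'(\rho_\infty)$, whereas you identify the a.e.\ limit of $f(\rho_n)$ through mass conservation and (reverse) Fatou, bound $q_n=(\rho_n-\rho_\infty)/\beta_n$ in $L^2(\Omega)$ by the comparison $|s-\rho_\infty|\le C|f(s)-f(\rho_\infty)|$ on $[0,R]$, and pass to the limit in the product $w_n=D_nq_n$ by pairing a weakly convergent sequence with bounded, a.e.\ convergent multipliers. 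Your version is arguably cleaner on the $0/0$ set $\{\rho_n=\rho_\infty\}$ (there $w_n=q_n=0$, so the convention for $D_n$ is immaterial), a point the paper's ratio argument glosses over; the paper's version, in exchange, gets strong $L^1$ convergence of the normalized densities rather than weak convergence, though nothing more is needed for the contradiction. Both proofs implicitly assume enough boundary regularity of $\Omega$ for the compact embedding $H^1(\Omega)\hookrightarrow L^2(\Omega)$, so that is not a gap specific to your write-up.
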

\begin{proof} By contradiction. Let $(\rho_n)_{n\in\N}$ be a sequence of non-negative bounded functions $\Omega\to\R$ such that 
	$$ \|\rho_n\|_{L^\infty(\Omega)}\leq R,\quad
	\langle\rho_n \rangle = \rho_\infty,\quad
	n\|\nabla f(\rho_n)\|_{L^2(\Omega)} < \|f(\rho_n)-f(\rho_\infty)\|_{L^2(\Omega)},\quad
	n\in\N . $$
	As a consequence $\|f(\rho_n)-f(\rho_\infty)\|_{L^2(\Omega)} > 0$ for every $n$, so we can define $u_n = (f(\rho_n)-f(\rho_\infty))/\|f(\rho_n)-f(\rho_\infty)\|_{L^2(\Omega)}$. It holds that 
	$\|\nabla u_n\|_{L^2(\Omega)} < 1/n$, so $\nabla u_n\to 0$ strongly in $L^2(\Omega)$. Since $\|u_n\|_{L^2(\Omega)}=1$, $u_n$ is bounded in $H^1(\Omega)$ and therefore via compact Sobolev embedding it is relatively compact in $L^2(\Omega)$, so up to subsequences $u_n\to u$ strongly in $L^2(\Omega)$ and weakly in $H^1(\Omega)$. Given that $\nabla u_n\to 0$ strongly in $L^2(\Omega)$ and $\Omega$ is connected, we deduce that $u$ is constant in $\Omega$. Since $\|u_n\|_{L^2(\Omega)}=1$ it follows that $u\neq 0$.
	As a consequence (up to subsequences)
	\begin{align}\label{apx.0}
	\frac{f(\rho_n)-f(\rho_\infty)}{\|f(\rho_n)-f(\rho_\infty)\|_{L^2(\Omega)}}\to u \quad\mbox{a.e.~in $\Omega$ and strongly in $L^2(\Omega)$.}
	\end{align}
	We distinguish two cases.\medskip\\
	{\em Case 1: $\rho_{\infty}>\rho_{cr}$.}
	Relation \eqref{apx.0} implies that, for every $\eps>0$ there exists $\Omega_\eps\subset\Omega$ such that $|\Omega\backslash\Omega_\eps|<\eps$ and
	\begin{align}\label{apx.1}
	\frac{f(\rho_n)-f(\rho_\infty)}{
		\|f(\rho_n)-f(\rho_\infty)\|_{L^2(\Omega)}}\to u\quad\mbox{strongly in }L^\infty(\Omega_\eps).
	\end{align}
	We show now that $\|f(\rho_n)-f(\rho_\infty)\|_{L^2(\Omega)}\to 0$ as $n\to\infty$. Indeed, if (by contradiction) there exists a subsequence (not relabeled) of $\rho_n$ such that
	$\|f(\rho_n)-f(\rho_\infty)\|_{L^2(\Omega)}\geq c>0$ for every $n\in\N$, then \eqref{apx.1} and the fact that $u$ is a non-zero constant imply the existence of $N_\eps>0$ such that
	\begin{align*}
	f(\rho_n)\geq f(\rho_\infty) + \frac{1}{2}cu\quad\mbox{if }u>0,\qquad
	f(\rho_n)\leq f(\rho_\infty) + \frac{1}{2}cu\quad\mbox{if }u<0,
	\end{align*}
	a.e.~in $\Omega_\eps$, for $n>N_\eps$. It follows that either $\rho_n \leq \rho_\infty - K$ a.e.~in $\Omega_\eps$, $n>N_\eps$, or 
	$\rho_n \geq \rho_\infty + K$ a.e.~in $\Omega_\eps$, $n>N_\eps$, for some $\eps-$independent constant $K>0$, which clearly violates the constraint $\fint_\Omega\rho_n dx = \rho_\infty$. Therefore,
	\begin{align}\label{apx.2}
	\|f(\rho_n)-f(\rho_\infty)\|_{L^2(\Omega)}\to 0\quad\mbox{as }n\to\infty.
	\end{align}
	From \eqref{apx.1}--\eqref{apx.2} we deduce
	\begin{align*}
	f(\rho_n)-f(\rho_\infty) \to 0\quad\mbox{strongly in }L^\infty(\Omega_\eps).
	\end{align*}
	Since $\rho_\infty>\rho_{cr}$ and $f$ is a one-to-one mapping on $(\rho_{cr},\infty)$, we easily deduce that
	\begin{align*}
	\rho_n\to \rho_\infty\quad\mbox{strongly in }L^\infty(\Omega_\eps),
	\end{align*}
	Being $|\Omega\backslash\Omega_\eps|<\eps$ and $\rho_n$ bounded in $L^\infty(\Omega)$ we obtain that
	\begin{align*}
	\rho_n\to \rho_\infty \quad\mbox{strongly in $L^p(\Omega)$ for every $p<\infty$,}
	\end{align*}
	and therefore
	\begin{align*}
	\frac{\rho_n - \rho_\infty}{f(\rho_n)-f(\rho_\infty)}\to \frac{1}{f'(\rho_\infty)}>0\quad\mbox{strongly in }L^2(\Omega).
	\end{align*}
	From the above relation and \eqref{apx.0} it follows
	\begin{align*}
	\frac{\rho_n-\rho_\infty}{\|f(\rho_n)-f(\rho_\infty)\|_{L^2(\Omega)}}\to \frac{u}{f'(\rho_\infty)}\neq 0 \quad\mbox{strongly in $L^1(\Omega)$,}
	\end{align*}
	which contradicts the constraint $\fint_\Omega\rho_n dx = \rho_\infty$.\medskip\\
	{\em Case 2: $\rho_\infty<\rho_{cr}$.} In this case $f(\rho_\infty)=0$ and $u>0$. We deduce from \eqref{apx.0}
	\begin{align*}
	\mbox{for a.e.~}x\in\Omega \quad \exists n^*(x)\in\N:
	\quad f(\rho_n(x)) > 0\quad\mbox{for }n\geq n^*(x).
	\end{align*}
	Given the definition of $f$, it follows
	\begin{align*}
	\mbox{for a.e.~}x\in\Omega \quad \exists n^*(x)\in\N:
	\quad \rho_n(x) > \rho_{cr}\quad\mbox{for }n\geq n^*(x).
	\end{align*}
	We deduce
	\begin{align*}
	\liminf_{n\to\infty}\rho_n \geq \rho_{cr}\quad\mbox{a.e.~in }\Omega.
	\end{align*}
	Fatou's Lemma and the assumption on the mass of $\rho_n$ yield
	\begin{align*}
	\rho_{cr} = \fint_\Omega \rho_{cr} dx\leq \fint_\Omega\liminf_{n\to\infty} \rho_n dx \leq \liminf_{n\to\infty}\fint_\Omega \rho_n dx = \rho_\infty
	\end{align*}
	against the assumption $\rho_\infty < \rho_{cr}$.
	
	This finishes the proof of the lemma.
\end{proof}

\end{document}